\documentclass[11pt]{article}
\usepackage[T1]{fontenc}
\usepackage[utf8]{inputenc}
\usepackage{lmodern}
\usepackage{amsmath,amssymb,amsthm,mathrsfs}
\usepackage{mathtools}
\usepackage[margin=1in]{geometry}
\usepackage{microtype}
\usepackage{parskip}
\usepackage{needspace}
\usepackage{xcolor}
\usepackage{enumitem}
\usepackage[hidelinks,bookmarksnumbered=true]{hyperref}

\numberwithin{equation}{section}
\newtheorem{theorem}{Theorem}[section]
\newtheorem{proposition}[theorem]{Proposition}
\newtheorem{lemma}[theorem]{Lemma}
\newtheorem{corollary}[theorem]{Corollary}
\theoremstyle{definition}

\theoremstyle{remark}
\newtheorem{remark}[theorem]{Remark}
\AddToHook{env/theorem/before}{\Needspace{5\baselineskip}}
\AddToHook{env/lemma/before}{\Needspace{5\baselineskip}}
\AddToHook{env/proposition/before}{\Needspace{5\baselineskip}}
\AddToHook{env/corollary/before}{\Needspace{5\baselineskip}}

\newcommand{\Fp}{\mathbb F_p}

\DeclareMathOperator{\Sing}{Sing}

\setlist[enumerate]{label=\textup{(\roman*)},topsep=5pt,itemsep=3pt}
\setlist[itemize]{topsep=5pt,itemsep=3pt}
\title{Quadratic distances in even dimensions over prime fields}
\author{%
  Thang Pham\thanks{Institute of Mathematics and Interdisciplinary Sciences,
    Xidian University. \newline
\hspace*{0.45cm} Email: \texttt{thangphammath@xidian.edu.cn}.}%
\and Chun-Yen Shen \thanks{National Taiwan University. ~Email: {\tt cyshen@math.ntu.edu.tw.}}
  \and Dung The Tran \thanks{VNU University of Science, Hanoi, Vietnam. ~Email: {\tt tranthedung56@gmail.com.}}
  \and Boqing Xue\thanks{Institute of Mathematical Sciences,
    ShanghaiTech University. Email: \texttt{xuebq@shanghaitech.edu.cn}.}%
}
\date{}
\hypersetup{pdftitle={Quadratic distances in even dimensions over prime fields},
  pdfauthor={Thang Pham, Chun-Yen Shen, Dung The Tran, and Boqing Xue}}

\begin{document}
\maketitle

\begin{abstract}
Let $p$ be an odd prime, let $m\geq1$ be an integer, and let
$Q$ be a nondegenerate quadratic form on $\mathbb F_p^{2m}$.
For a nonempty set $E\subseteq\mathbb F_p^{2m}$, write
$\Delta_Q(E)=\{Q(x-y):x,y\in E\}$. We prove that, whenever
$|E|>p^m$,
\[
|\Delta_Q(E)|\gg \min\left\{\frac{p}{L_E},\, \frac{|E|}{p^{m-1}L_E^2}\right\},  \qquad L_E=\log\left(2+\frac{p^{m+1}}{|E|}\right),
\]
with an absolute implied constant independent of $p$, $m$
and $Q$. This confirms the Erd\H{o}s--Falconer distance conjecture up to a logarithmic factor. In the {anisotropic }planar case $m=1$, for small sets, we also prove that
\[
|\Delta_Q(E)|\gg\frac{|E|}{\log(2|E|)}, \qquad (1\leq |E|\leq p),
\]
which is also optimal up to a logarithmic factor.
\end{abstract}

\section{Introduction}\label{sec:introduction}

The distance problem asks how the size of a set controls the number of distances that it determines. Erd\H{o}s \cite{Erdos} conjectured that every finite set $E\subset\mathbb R^2$ determines at least $c|E|/\sqrt{\log(2|E|)}$ distinct distances. Guth and Katz \cite{GK} proved the lower bound $c|E|/\log(2|E|)$. The continuous analogue is Falconer's conjecture \cite{Falconer}: a compact subset of $\mathbb R^2$ with Hausdorff dimension greater than one has a distance set of positive Lebesgue measure. Over finite fields, one seeks lower bounds for the number of distances determined by small sets and size conditions that force a positive proportion of all field elements to occur as distances.

\subsection{The distance problem and previous results}

In the distance statements, $p$ is an odd prime and $\Fp$ is the field with $p$ elements. We use $\mathbb{F}$ for the ground field and $\overline{\mathbb{F}}$ for its algebraic closure. In positive characteristic, $p$ also denotes its characteristic. The algebraic constructions in Sections~\ref{sec:interpolation}--\ref{sec:removal-by-order} take place over $\overline{\mathbb{F}}$; results valid over smaller fields are stated separately.

For $x=(x_1,x_2)\in\Fp^2$, put ${\|x\|}=x_1^2+x_2^2$. For a nonempty set $E\subset\Fp^2$ and $x\in \Fp^2$, define
\[
\Delta(E)=\{\|a-b\|:a,b\in E\},\qquad \Delta_{x}(E)=\{\|x-y\|:y\in E\}.
\]
We call these the distance set and the pinned distance set at $x$, respectively. Since $\Delta_{x}(E) \subseteq \Delta(E)$ for any $x\in E$, a pinned lower bound also gives a lower bound for $|\Delta(E)|$. We write $U\ll V$ or $U=O(V)$, if $U\le cV$ for an absolute constant $c$; a subscript indicates the allowed dependence of the constant.

The Erd\H{o}s--Falconer problem over $\Fp^2$ asks how large $|E|$ must be to guarantee $|\Delta(E)|\gg p$. The conjectured threshold exponent is one. More precisely, the prime-field conjecture predicts that, for every $\varepsilon>0$,
\[
 |E|\ge p^{1+\varepsilon}  \quad\Longrightarrow\quad |\Delta(E)|\gg_\varepsilon p.
\]
See \cite{HIKR, IosevichRudnev} for the finite-field formulation and its sharpness examples. The stronger pinned version asks for the same conclusion for $|\Delta_x(E)|$ at some $x\in E$. The factor $p^\varepsilon$ cannot be replaced by a fixed constant: lattice examples rule out a uniform assertion with the hypothesis $|E|\ge Cp$ and absolute positive constants \cite[Theorem~1.11(ii)]{BHIR}.

The Fourier-analytic method of Iosevich and Rudnev \cite{IosevichRudnev} gives a positive proportion of distances when $E\subset\mathbb F_q^2$ satisfies $|E|\ge Cq^{3/2}$, where $q$ is an odd prime power and $C$ is a sufficiently large absolute constant. We use the same distance notation over $\mathbb F_q$. The exponent $4/3$ was obtained through Fourier analysis and group
actions in the work of Chapman, Erdo\u{g}an, Hart, Iosevich, and Koh \cite{CEHIK} and Bennett, Hart, Iosevich, Pakianathan, and Rudnev \cite{BHIR}.

For smaller sets, Bourgain, Katz, and Tao \cite{BKT} used a sum-product estimate to improve the elementary exponent $1/2$ for distinct distances over prime fields with $p\equiv3\pmod4$. Their work established a direct connection between finite-field incidence estimates and distance bounds.

Over prime fields, Murphy, Petridis, Pham, Rudnev, and Stevens \cite[Theorem~1]{MPPRS} improved the pinned threshold to $p^{5/4}$:
\[
 |E|\ge p^{5/4}  \quad\Longrightarrow\quad  \max_{x\in E}|\Delta_x(E)|\gg p.
\]
This result holds for every odd prime. They also proved
\cite[Theorem~3]{MPPRS} that
\begin{equation}\label{eq:known-small-pinned}
\max_{x\in E}|\Delta_x(E)|\gg |E|^{2/3},  \qquad |E|\le p^{4/3},
\end{equation}
unless $E$ is contained in one isotropic line.

Recent work also explains the role of planar distance estimates in higher dimensions. Pham, Shen and Xue \cite{PhamShenXue} gave an extraction theorem that transfers planar quadratic-distance estimates to even dimensions, for the corresponding binary quadratic forms. Their results include a pinned threshold $p^{d/2+1/4}$ in even dimension $d$. Pham and Xue \cite{PhamXue} studied spherical extension theorems in even dimensions and their applications to distances, including conditions under which almost every pin determines almost all distances.

\subsection{Main distance theorem}

Let $Q$ be a quadratic form on a finite-dimensional vector space $V$ over $\Fp$. For a nonempty set $E\subseteq V$, define
\[
\Delta_Q(E)=\{Q(a-b):a,b\in E\}.
\]
A quadratic form $Q$ is anisotropic if $Q(v)=0$ implies $v=0$. The Witt index of $Q$ is the largest dimension of a subspace on which $Q$ vanishes identically. {In dimension $2m$, a nondegenerate form has Witt index $m-1$ or $m$. We call the latter case split. Our main theorem treats both cases.}

{
\begin{theorem}\label{thm:distance-even}
Let $p$ be an odd prime, let $m\geq1$ be an integer, and let
$Q$ be a nondegenerate quadratic form on $\Fp^{2m}$.
If $E\subseteq\Fp^{2m}$ satisfies $|E|>p^m$, then
\begin{equation}\label{eq:distance-even-split}
 |\Delta_Q(E)|\gg
 \min\left\{
       \frac{p}{L_E},
       \frac{|E|}{p^{m-1}L_E^2}
      \right\},
 \qquad
 L_E:=\log\left(2+\frac{p^{m+1}}{|E|}\right).
\end{equation}
The implied constant is absolute and independent of $p$, $m$
and $Q$.
\end{theorem}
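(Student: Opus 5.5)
We reduce to a weighted incidence count and convert it to a distance bound with Cauchy--Schwarz. For $t\in\Fp$ let $\nu(t)=|\{(x,y)\in E^2: Q(x-y)=t\}|$, so $\sum_t\nu(t)=|E|^2$ and
\[
|E|^4\le|\Delta_Q(E)|\sum_t\nu(t)^2 .
\]
The task is therefore an upper bound for the energy $\mathcal E=\sum_t\nu(t)^2$, the number of quadruples $(x,y,x',y')\in E^4$ with $Q(x-y)=Q(x'-y')$. We aim for
\[
\mathcal E\ll \frac{|E|^4 L_E}{p}+p^{m-1}|E|^3L_E^2 .
\]
Dividing $|E|^4$ by this gives exactly the minimum in \eqref{eq:distance-even-split}. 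The first term is the random contribution: each of the $\asymp p$ values $t$ receives about $|E|^2/p$ pairs. The second term is the structured contribution, coming from configurations concentrated on cosets of maximal isotropic (or, in the nonsplit case, Witt index $m-1$) subspaces of dimension $\le m$.

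\emph{Orthogonal decomposition.} Over $\Fp$ with $p$ odd, $Q$ is equivalent to an orthogonal sum of $m$ binary forms: hyperbolic planes, plus at most one anisotropic plane in the nonsplit case. We would invoke the extraction theorem of Pham--Shen--Xue cited in the introduction. It writes $Q(x-y)=\sum_j q_j(x_j-y_j)$ with $x=(x_1,\dots,x_m)$ and $x_j\in\Fp^2$, and transfers planar estimates to dimension $2m$. Concretely, fibre $E$ over the last $2m-2$ coordinates and apply a planar bound on each fibre. In the fibred count the planar contribution gains a factor $p^{m-1}$ from the trivial count in the remaining coordinates. This is the source of the factor $p^{m-1}$ and of the threshold $|E|>p^m$.

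\emph{Planar input.} The key planar estimate needed is a bound for the number of quadruples in $A\subseteq\Fp^2$ with $q(a-b)=q(a'-b')$:
\[
\ll |A|^4/p+|A|^3\log(2|A|).
\]
For anisotropic $q$ this is what the later small-set result $|\Delta_Q(E)|\gg|E|/\log$ encodes, via a Guth--Katz style reduction. Distance quadruples correspond to rigid motions $g\in SO(q)\ltimes\Fp^2$ with $g a=a'$ and $gb=b'$. One then counts incidences between lines in $\mathbb P^3$ and applies Rudnev's point--plane incidence theorem over $\Fp$, which is valid when the number of points is $\le p^2$. Dyadic summation over the multiplicity $k$ of the rich transformations produces one logarithm per dyadic scale; this is where $L_E$ enters. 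For the split (hyperbolic) planar form, $q(x)=x_1x_2$, the same count reduces to an energy of products/differences, and the isotropic lines supply the exceptional term. Once these planar bounds are in hand, we would aggregate them over fibres using Cauchy--Schwarz. The range of $|E|$ relative to $p^{m+1}$ decides which term dominates. Passing to random subsets of size $\asymp p^{m+1}/L_E$ when $|E|$ is larger keeps the incidence theorem within its admissible range, so that the logarithm is $L_E$ rather than $\log|E|$.

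\emph{Main obstacle.} The hardest step will be the planar energy bound with only logarithmic loss, uniformly across the hyperbolic and anisotropic cases. The pure Rudnev incidence count loses a power, as in the $|E|^{2/3}$ bound \eqref{eq:known-small-pinned}. Recovering the near-linear bound requires Guth--Katz type structure: the polynomial partitioning/interpolation and removal steps announced for Sections~\ref{sec:interpolation}--\ref{sec:removal-by-order}, carried out over $\overline{\mathbb F}$ with degree control valid in characteristic $p$. I would first try to prove the $k$-rich transformation bound $\ll |A|^3/k^2$ for $k\ge2$, valid for $|A|\le p$. I would then verify that the fibrewise transfer preserves the dependence on $L_E^2$ without introducing a loss in $m$, so that the implied constant is absolute.
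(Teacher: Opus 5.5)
Your plan rests on the full-energy bound
\[
\mathcal E\ll \frac{|E|^4L_E}{p}+p^{m-1}|E|^3L_E^2,
\]
and this bound is false for split forms just above the threshold. Take $Q(x,y)=x\cdot y$ on $\Fp^m\times\Fp^m$, $W=\Fp^m\times\{0\}$ and $E=W\cup\{z\}$ with $z\notin W$. Then $|E|=p^m+1$ and every pair in $W^2$ has distance $0$, so $\mathcal E\ge\nu(0)^2\ge p^{4m}$. Your right-hand side is only $\asymp p^{4m-1}\log^2p$. Unions of up to about $\sqrt{p/\log p}$ translates of $W$ fail in the same way.

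Cauchy--Schwarz with $\mathcal E$ therefore gives only $|\Delta_Q(E)|\gtrsim1$, although here $|\Delta_Q(E)|=p$, because the single pin $z$ sees every value on $W$. Nothing in your argument distinguishes $|E|>p^m$ from $|E|=p^m$, and at $|E|=p^m$ the theorem fails for $E=W$. Your planar split bound $|A|^4/p+|A|^3\log(2|A|)$ for $H(x)=x_1x_2$ has the same defect: take $A$ on a horizontal line.

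The missing idea is a separate treatment of zero distances. The paper uses only the nonzero energy, through $(|A|^2-\nu_A(0))^2\le|\Delta_H(A)\setminus\{0\}|\,\mathfrak E_H^\times(A)$. It first makes a dyadic selection of vertical fibres ensuring $M_v(A)M_h(A)\le2|A|$, which controls the number of lines in a plane for the reflected line family. When one isotropic line carries at least $|A|/4$ points, it instead pins at a point of $E$ off that line.

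The transfer to dimension $2m$ also does not work as you describe it. When $|E|\approx p^m$, fibres over the last $2m-2$ coordinates contain on average $|E|/p^{2m-2}\approx p^{2-m}$ points. Almost all pairs lie in different fibres, where $Q(x-y)$ involves every coordinate, so no trivial count produces the factor $p^{m-1}$.

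The extraction does not aggregate energies at all. It slices $E$ by cosets $z+R^\perp$, where $R$ is an $(m-1)$-dimensional totally isotropic subspace, and projects to the plane $R^\perp/R$, where distances are preserved. This yields a single planar set $A$ with $|A|\gg p^2/(1+\rho)$, where $\rho=p^{m+1}/|E|$, and $\Delta_{P_Q}(A)\subseteq\Delta_Q(E)$. Only a planar distance bound for $A$ is then needed; in the nonsplit case Theorem~\ref{thm:distance} gives $p/L_E$ at once.

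In the split case you must also ensure $|A|\ge p$ and that $A$ is not contained in an isotropic line. Lemma~\ref{lem:even-split-extraction} obtains this from the collision identity $\mathbb E(P_R-pS_R)=\alpha|E|(|E|-p^m)\ge0$, together with the existence of a nonzero distance; that is where $|E|>p^m$ is used. Proposition~\ref{app:prop-split-distance} then finishes. The term $|E|/(p^{m-1}L_E^2)$ comes from $p^2/\rho=|E|/p^{m-1}$ and the two logarithms in that planar estimate, not from a fibrewise count.

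Finally, the planar input itself is left as a program. You rightly note that Rudnev's bound loses a power, but the rich-transformation estimate you would need is Theorem~\ref{thm:main}, together with Koll\'ar's bound for $k=2$, and that is the bulk of the paper.
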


\begin{remark}
The strict inequality $|E|>p^m$ cannot be replaced by
$|E|\geq p^m$ without an exception. Indeed, if $Q$ has
Witt index $m$ and $E$ is a translate of an $m$-dimensional
totally isotropic subspace, then
\[
 |E|=p^m,
 \qquad
 \Delta_Q(E)=\{0\}.
\]
For forms of Witt index $m-1$, the proof gives the stronger
estimate
\begin{equation}\label{eq:distance-even}
 |\Delta_Q(E)|\gg\frac{p}{L_E},
 \qquad |E|\geq p^m.
\end{equation}
\end{remark}

For every nondegenerate quadratic form, Theorem~\ref{thm:distance-even}
gives
\[
 |\Delta_Q(E)|\gg\frac{p}{\log^2(2p)}
 \qquad\text{whenever } |E|>p^m.
\]
If $|E|\geq p^mL_E$, the theorem gives the stronger bound
$|\Delta_Q(E)|\gg p/L_E$. In particular, this bound holds
whenever $|E|\geq C p^m\log(2p)$, with an absolute constant $C$.

We prove Theorem~\ref{thm:distance-even} in
Section~\ref{sec:even-dimensional-distances}, treating the two
Witt types separately. For forms of Witt index $m-1$, we
combine the extraction theorem in
\cite[Theorem~1.1]{PhamShenXue} with the following planar
estimate. For split forms, we use
Proposition~\ref{app:prop-split-distance} and the refined
extraction in Lemma~\ref{lem:even-split-extraction}.
}

\begin{theorem}\label{thm:distance}
Let $p$ be an odd prime, let $Q$ be an anisotropic binary quadratic form over $\Fp$, and let $E\subset\Fp^2$ be nonempty. Then
\[
|\Delta_Q(E)|\gg
\begin{cases}
|E|/\log(2|E|),& |E|\le p,\\[2pt]
p/\log(2p^2/|E|),& p\le |E|\le p^2.
\end{cases}
\]
The implied constant is absolute and independent of $Q$.
\end{theorem}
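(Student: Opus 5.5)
The plan is to follow the Elekes--Sharir--Guth--Katz framework, replacing the real polynomial method by an incidence bound valid over $\Fp$; anisotropy of $Q$ is what rules out the degenerate configurations.

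\textbf{Reduction to counting rigid motions.} Since $Q$ is anisotropic, after a linear change of variables I identify $\Fp^2$ with $\mathbb F_{p^2}$ and write $Q=c\,N$, where $N$ is the norm form. Let $U=\{u\in\mathbb F_{p^2}:N(u)=1\}$, a cyclic group of order $p+1$. The isometry group of $Q$ contains the rigid motions $g(z)=uz+t$ with $u\in U$ and $t\in\mathbb F_{p^2}$, and this group $G$ has order $p^2(p+1)$. I count the energy
\[
T=\#\{(a,b,c,d)\in E^4:\ Q(a-b)=Q(c-d)\neq 0\}.
\]
By Cauchy--Schwarz, $|\Delta_Q(E)|\ge (|E|^2-|E|)^2/T$, since the pairs $a=b$ contribute only the single distance $0$.

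The identity $N(a-b)=N(c-d)\ne0$ holds if and only if $c-d=u(a-b)$ for some $u\in U$. In that case the pair $(a,b)$ is sent to $(c,d)$ by a motion $g\in G$, and this motion is determined by $u$ together with $g(a)=c$. Since $a\neq b$, the number of valid $u$ is exactly one, so
\[
T\le \sum_{g\in G}|E\cap g^{-1}E|^2.
\]
Write $G_k=\{g\in G:|E\cap g^{-1}E|\ge k\}$. Summing over dyadic $k$ gives $T\ll\sum_{k\ \mathrm{dyadic}} k^2|G_k|$. The whole problem therefore reduces to a rich-motions bound.

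\textbf{Rich-motions bound via line incidences.} I parametrize $G$ so that, for each pair $(a,c)\in E^2$, the set $\ell_{a,c}=\{g:g(a)=c\}$ is a line in a $3$-dimensional space over $\Fp$ or $\mathbb F_{p^2}$. This mimics the Elekes--Sharir coordinates, with $U$ playing the role of the rotation group. A motion in $G_k$ is then a point lying on at least $k$ of the $|E|^2$ lines. I will apply the Guth--Katz type bound for lines in $3$-space that holds in positive characteristic, namely Kollár's version or the Rudnev-type bound. For $N=|E|^2$ lines, no more than $\sqrt N$ of them in a common plane or regulus, and $N\ll p^2$, it gives
\[
|G_k|\ll \frac{N^{3/2}}{k^2}+\frac{N}{k}.
\]
The main obstacle is verifying the non-degeneracy hypotheses, namely bounding how many lines $\ell_{a,c}$ can lie in a plane or a doubly ruled surface. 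Over $\mathbb R$ this uses the geometry of the reals. Here I expect anisotropy to be the key input: an isotropic line would make many motions fix many points at once, whereas the absence of isotropic directions forces the lines $\ell_{a,c}$ with $a$ fixed, or with $c$ fixed, to form controlled reguli. I would check this by writing the lines explicitly and computing which quadrics contain many of them, using that $U$ has no nontrivial fixed vectors.

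\textbf{Summation and the large-set range.} For $|E|\le p$ the condition $N=|E|^2\le p^2$ holds. Summing gives $\sum_k k^2|G_k|\ll |E|^3\log(2|E|)$, because the top term contributes $|E|^3$ at each of $\log|E|$ dyadic scales and the $N/k$ term is dominated. This yields $|\Delta_Q(E)|\gg |E|/\log(2|E|)$.

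For $p\le|E|\le p^2$ I split $T$ differently. Motions with $k\lesssim |E|^2/p^2$ are typical, and I bound their contribution by the trivial count $|G|\cdot(|E|^2/p^2)^2\approx |E|^4/p$, which corresponds to the expected equidistribution of distances. For larger $k$ I apply the incidence bound to random subsets of $E$ of size about $p$, so that the characteristic restriction holds, and then rescale. The contributing dyadic scales then run only from $|E|^2/p^2$ up to $|E|$, which is about $\log(2p^2/|E|)$ scales. Hence $T\ll |E|^4\log(2p^2/|E|)/p$, and therefore $|\Delta_Q(E)|\gg p/\log(2p^2/|E|)$.
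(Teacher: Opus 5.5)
\textbf{Gap: the rich-point bound is assumed, not proved.} Your reduction is the same as the paper's: Elekes--Sharir lines parametrizing the non-translation direct isometries, dyadic summation of the energy, and sampling when $|E|\ge p$. The trouble is that the step carrying all the weight is assumed, and it is misattributed. The results you cite do not give $|G_k|\ll N^{3/2}/k^2+N/k$ in characteristic $p$. Under plane/regulus hypotheses and $N\ll p^2$, Koll\'ar's and Rudnev's bounds control only the number of points where at least \emph{two} lines meet, by $O(N^{3/2})$. This is exactly how the paper uses Koll\'ar, in Lemma~\ref{thm:background-two-rich}.

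For larger $k$, what joints and Koll\'ar's surface bounds give is Lemma~\ref{lem:weak}, whose main term decays only like $N^{3/2}/k^{3/2}$. Inserted into your sum, this gives $T\ll |E|^3\sum_{k\le|E|}k^{-1/2}\approx |E|^{7/2}$, so only $|\Delta_Q(E)|\gg|E|^{1/2}$, which is the elementary exponent. The $k^{-2}$ decay for $k\ge3$ over $\Fp$ (with $n<p^2/6$ and only a plane hypothesis) is precisely Theorem~\ref{thm:main}, with Corollary~\ref{cor:RP} for sampled families. That is the main new result of the paper. Since polynomial partitioning is unavailable over finite fields, it takes Sections~\ref{sec:plane-energy}--\ref{sec:endpoint-assembly}:
interpolation with universal radial vanishing in the lifted variables $(x,y,z,a,c)$, direction curves, a Wronskian count, an ascending-chain bound on exceptional lines, Lewko's planar incidence theorem, and induction on the number of lines. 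So the ``main obstacle'' you name, bounding lines in planes and quadrics, is routine (Lemma~\ref{app:lem-distance-line-properties}). The real obstacle is the rich-point estimate, and your proposal leaves it unproved.

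\textbf{Smaller corrections.}
\begin{itemize}
\item Work with $T=\sum_g m(g)(m(g)-1)$ and sum only over $k\ge2$. Since $\sum_{g\in G}m(g)=(p+1)|E|^2$, your upper bound $\sum_g m(g)^2$ is at least $(p+1)|E|^2$, which is not $O(|E|^3\log(2|E|))$ when $|E|$ is small compared with $p$.
\item Translations are not points of the Elekes--Sharir space, so they must be bounded separately, by $|E|^3$.
\item The hypothesis $N<p^2/6$ forces an averaging over random subsets of size about $|E|/3$ when $p/\sqrt6\le|E|\le p$.
\item For $|E|\ge p$, sampling points of $E$ instead of lines, as the paper does, is a workable variant. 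However, $m_{E'}(g)$ is then not a sum of independent indicators, so concentration needs a second-moment argument.
\end{itemize}
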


Note that anisotropic binary quadratic forms exist for every odd prime: one may take $x_1^2-\delta x_2^2$ with $\delta\in\Fp^\times$ a nonsquare. The standard form $\|x\|=x_1^2+x_2^2$ is anisotropic precisely when $p\equiv3\pmod4$. The anisotropy hypothesis excludes the isotropic-line obstruction: if $v\ne0$ but $Q(v)=0$, then $E=\{tv:t\in\Fp\}$ satisfies $\Delta_Q(E)=\{0\}$.

For $|E|\le p$, Theorem~\ref{thm:distance} gives the optimal power of $|E|$ up to a logarithmic factor. To see why a larger power is impossible, take $E=\{(j,0):0\le j<r\}$ with $1\le r\le p$. Then
\[
\Delta_Q(E)\subseteq Q(1,0)\cdot\{0^2,1^2,\ldots,(r-1)^2\}.
\]
{Consequently, $|\Delta_Q(E)|\le|E|$, so the power of
$|E|$ in a uniformly valid lower bound cannot be increased.}
Our bound has the same form as the Guth--Katz lower bound over $\mathbb R^2$. For the standard form $\|\cdot\|$ with $p\equiv3\pmod4$ and $|E|\le p$, it improves the power $2/3$ in \eqref{eq:known-small-pinned} to one for non-pinned distances, with a logarithmic loss.

{ We note that, in the setting of arbitrary fields,
Theorem~\ref{thm:distance} can be extended to the following form. Let $E\subseteq\mathbb{F}^2$ be finite and nonempty, and suppose that
$E$ is not contained in an isotropic affine line. In positive
characteristic, assume additionally that $|E|\leq \mathtt{Char}(\mathbb{F})$. Then
\[
 |\Delta_Q(E)|\gg\frac{|E|}{\log^2(2|E|)}.
\]
The implied constant is absolute.}

\subsection{The main incidence tool}

We obtain Theorem~\ref{thm:distance} from a rich-point estimate for lines in three dimensions. For a finite family $\mathcal L$ of distinct affine lines in ${\mathbb{F}}^3$, write
\[
 r_{\mathcal L}(u)=|\{\ell\in\mathcal L:u\in\ell\}|,
 \qquad
 R_k(\mathcal L)=|\{u\in{\mathbb{F}}^3:r_{\mathcal L}(u)\ge k\}|.
\]
For a finite point set $X$ and a line family $\mathcal L$, let
\[
 I(X,\mathcal L)=|\{(u,\ell)\in X\times\mathcal L:u\in\ell\}|.
\]
All richness parameters are integers.

\begin{theorem}\label{thm:main}
Let $\mathbb{F}$ be a field of characteristic zero or characteristic $ p > 2 $ and let $\mathcal L$ be a family of $n$ distinct affine lines in $\mathbb{F}^3$. In positive characteristic, assume that $n < p^2/6 $. Suppose that every affine plane contains at most $s$ lines of $\mathcal L$. Then
\[
R_k(\mathcal L)\ll \frac{n^{3/2}+ns}{k^2}+\frac nk, \qquad k\ge3.
\]
\end{theorem}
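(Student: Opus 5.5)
We will prove Theorem~\ref{thm:main} with the Guth--Katz polynomial method, adapted to fields where polynomial partitioning is unavailable; in the spirit of Koll\'ar's version, degree-reduction plus flecnode/ruled-surface arguments carry the load. We induct on $n$ and split into the two ranges $k\ge C$ (with $C$ a large absolute constant) and $3\le k<C$. The second range should follow from the first and a separate count of $3$-rich points. The target bound can be read as a dichotomy: either the rich points are incidence-generic, which gives the $n^{3/2}/k^2$ term; or many lines sit in a low-degree surface, which produces the $ns/k^2$ and $n/k$ terms.

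\textbf{Step 1 (interpolation).} Let $X$ be the set of $k$-rich points and suppose $|X|=R$. Choose a parameter $D$ of size about $\sqrt n/k$ up to constants. Select a random subset of $\mathcal L$, picking each line with probability $\rho\approx D^2/n$. By parameter counting, this subset lies in the zero set of a nonzero polynomial $P$ of degree at most $D$. This is where $n<p^2/6$ matters in characteristic $p$: it guarantees $D<p$, so the vanishing-on-a-line criterion (restricting a polynomial of degree $D$ to a line and finding $D+1$ zeros) works over $\overline{\mathbb F}$. Each $k$-rich point then has, with good probability, at least two selected lines through it. This propagates vanishing: any line meeting $Z(P)$ in more than $D$ points of $X$ lies in $Z(P)$. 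Combining with $I(X,\mathcal L)\ge kR$, we conclude that either $R\ll n^{3/2}/k^2$, or a positive proportion of the incidences come from lines contained in $Z(P)$.

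\textbf{Step 2 (structure of $Z(P)$).} We factor $P$ into irreducible components and treat planes, reguli and other ruled surfaces, and non-ruled components separately. On a non-ruled component of degree $d$, Salmon--Cayley--Monge flecnode theory applies; it is valid in characteristic $p$ once $d$ is small relative to $p$, which again follows from the size hypothesis. The flecnode polynomial of degree $11d-24$ shows that the component contains at most $O(d^2)$ lines. Moreover, each point of the surface that is not a singular point lies on at most two lines. Singular points lie on a curve of degree $O(d^2)$ and can be handled with the same rich-point bound via induction. Plane components contain at most $s$ lines each, and at most $D$ planes appear. Bounding the rich points on a plane by $\min(\binom{s}{2}, s^2/k^2+s/k)$ via the Szemer\'edi--Trotter-type estimate (in positive characteristic one only has the trivial pairwise bound, which is why $ns/k^2$ appears) gives the $ns/k^2$ contribution. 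Here we sum $s\cdot n_i/k^2$ over the planes, where $n_i$ is the number of lines in the $i$-th plane. Ruled non-planar components of degree $d$ have at most two lines through a generic point, except for one or two exceptional lines. For them, $k\ge3$ gives points of richness at most $O(1)$ off an exceptional set. Points lying on exceptional lines contribute $O(n/k)$ because each such line meets at most $n$ others.

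\textbf{Step 3 (small $k$ and bookkeeping).} For $3\le k<C$, we run the same argument with $D\approx\sqrt n$. Here the third line through a point forces it to be a flecnodal or planar-type point, which again yields $n^{3/2}+ns$. We expect the main obstacle to be Step~2 in positive characteristic: ensuring that the ruled-surface and flecnode statements hold for degrees up to $\sqrt n\ll p$, and controlling the lines contained in $Z(P)$ but not selected, without losing logarithms. We would first try to rely on Koll\'ar's characteristic-free treatment under $D<p$ and absorb the induction losses by choosing $C$ large.
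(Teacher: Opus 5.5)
Your proposal has a genuine gap in Step~1, and that step is where the whole difficulty of the theorem lies.

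\textbf{The parameters in Step~1 do not work.} With $D\approx\sqrt n/k$ and $\rho\approx D^2/n\approx k^{-2}$, a $k$-rich point carries on average $\rho k\approx 1/k$ sampled lines. So $P$ vanishes at only about a $1/k$ fraction of the rich points, not at most of them.

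More seriously, no choice of $\rho$ works near the threshold. Put $R=An^{3/2}/k^2$ with $A$ a large constant. A typical line contains about $kR/n\approx A\sqrt n/k$ rich points. About a proportion $\min\{1,\rho k\}$ of these are zeros of $P$, while $\deg P\approx\sqrt{\rho n}$.
\begin{itemize}
\item If $\rho k\le 1$, forcing the line into $Z(P)$ needs $\rho\gtrsim A^{-2}$. This is incompatible with $\rho\le 1/k$ once $k\gg A^2$.
\item If $\rho k\ge 1$, it needs $A\sqrt n/k\gtrsim\sqrt{n/k}$, i.e.\ $k\lesssim A^2$.
\end{itemize}
Interpolating through the rich points themselves, with any multiplicity, hits the same threshold. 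So for large $k$, zero counting forces no line into $Z(P)$. Meanwhile the lines outside $Z(P)$ may each carry up to $\deg P\gtrsim\sqrt{n/k}$ rich points. Comparing $n\sqrt{n/k}$ with $kR$ gives only $R\ll n^{3/2}/k^{3/2}$, which is what the paper obtains cheaply from joints in Lemma~\ref{lem:weak}.

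Flecnode and ruled-surface information about $Z(P)$ cannot repair this, because the loss occurs before any line has been placed in $Z(P)$. Over $\mathbb R$, Guth and Katz avoid the $\sqrt k$ loss by polynomial partitioning. That allows $D\approx\sqrt n/k$ and handles the lines not in $Z(P)$ inside cells. You give partitioning up and put nothing in its place. Consequently the dichotomy at the end of Step~1 is unsupported, and Step~2 has nothing to act on for large $k$, which is exactly where the $k^{-2}$ decay matters.

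\textbf{What the paper uses instead.} It interpolates with a polynomial $P(x,y,z,a,c)$ in point and direction variables, of degree $d$ in $(x,y,z)$ and $e$ in $(a,c)$. It imposes universal radial vanishing of order $2e$ at the $M$ rich points.
\begin{itemize}
\item Each point costs $O(e^3)$ conditions in a space of dimension about $d^3e^2$. Yet it yields a zero of order $2e$ on every lifted line through it.
\item Hence more than $d/(2e)$ rich points force $P$ to vanish on a lift. By Lemma~\ref{lem:transport}, about $d/e$ of them force its first derivatives to vanish there too.
\item With $e\approx n^{3/8}M^{-1/4}$ and $d\approx (Me)^{1/3}$, one gets $d/e\approx M^{1/2}n^{-1/4}$. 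When $M\approx An^{3/2}/k^2$, this is smaller than the typical $Mk/n$ points per line by a factor $\sqrt A$. This recovers exactly the $\sqrt k$ your method loses.
\end{itemize}
The remaining structural work is also absent from your plan:
\begin{itemize}
\item Proposition~\ref{prop:direction-curve} places the selected directions at a point on a curve of degree at most $e/2$.
\item Removal by coefficient order, Wronskians, and the ascending-ideal bound of Proposition~\ref{prop:nested-base-budget} leave at most $96d^2(e+1)^2<\delta n$ exceptional lines, which are handled by induction.
\item Every other retained incidence is either a smooth flex, $O(e^2M)$ in total with $e^2\ll k/\sqrt A$, or lies in an assigned plane, where Lemma~\ref{lem:plane-packets} counts it.
\end{itemize}

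\textbf{Two smaller errors in Step~2.} First, a smooth point of a surface can lie on more than two of its lines (for example, Eckardt points on a cubic surface), so the right notion is flatness. Second, a rich point whose lines are spread over several plane components, or split between planes and other components, is missed when you count $k$-rich points plane by plane.
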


In particular, we obtain the following estimate.

\begin{corollary}\label{cor:RP}
Let $\mathbb F$ and $\mathcal L$ satisfy the field and size
hypotheses of Theorem~\ref{thm:main}, and fix $C>0$.
If every affine plane contains at most $C\sqrt n$ lines
of $\mathcal L$, then
\[
 R_k(\mathcal L)
 \ll_C \frac{n^{3/2}}{k^2}+\frac nk,
 \qquad k\geq3.
\]
\end{corollary}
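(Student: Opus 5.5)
The corollary follows from Theorem~\ref{thm:main} by taking $s=C\sqrt n$. The plan is to check the hypotheses and then absorb the plane term into the main term.

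First, the field and size hypotheses of Corollary~\ref{cor:RP} are those of Theorem~\ref{thm:main}: characteristic zero or $p>2$, and $n<p^2/6$ in positive characteristic. The assumption that every affine plane contains at most $C\sqrt n$ lines of $\mathcal L$ is exactly the planar hypothesis of Theorem~\ref{thm:main} with $s=C\sqrt n$. Since $s$ enters the theorem only as an upper bound, it does not need to be an integer.

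Applying Theorem~\ref{thm:main} for each $k\ge3$ gives
\[
R_k(\mathcal L)\ll \frac{n^{3/2}+n\cdot C\sqrt n}{k^2}+\frac nk=\frac{(1+C)\,n^{3/2}}{k^2}+\frac nk .
\]
The factor $1+C$ goes into the implied constant, which is allowed to depend on $C$. This gives $R_k(\mathcal L)\ll_C n^{3/2}/k^2+n/k$ for all $k\ge3$.

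There is no real obstacle here. All the difficulty lies in Theorem~\ref{thm:main}, and the corollary only specializes the parameter $s$ and tracks the constant.
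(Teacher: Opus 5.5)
Your proposal is correct and is exactly the paper's argument: the paper introduces the corollary with ``In particular'' as a direct specialization of Theorem~\ref{thm:main} to $s=C\sqrt n$, and absorbs the factor $1+C$ into the implied constant. If integrality of $s$ were ever a concern, you could take $s=\lfloor C\sqrt n\rfloor$ instead, since the number of lines in a plane is an integer.
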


{
In characteristic zero, Theorem~\ref{thm:main} gives an analogue
of the Guth--Katz rich-point estimate
\cite[Theorem~4.5]{GK}, obtained without polynomial partitioning.
The plane term in our estimate is $ns/k^2$, whereas their
estimate has the sharper term $ns/k^3$. When $s\ll\sqrt n$,
both estimates give
\[
 R_k(\mathcal L)\ll \frac{n^{3/2}}{k^2}+\frac nk,
 \qquad k\geq3.
\]
Our theorem also applies to complex lines.
}

The restriction $k\ge3$ is necessary when no hypothesis is imposed on quadrics. On the smooth split quadric $z=xy$, choosing $m$ lines from each of its two rulings gives $m^2$ distinct points incident to exactly two of the chosen lines, while every plane contains at most two of these lines. Theorem~\ref{thm:main} keeps the dependence on $s$ explicit, since a subfamily inherits the original bound $s$. The $k^{-2}$ decay is essential in the distance argument: summing the second moment introduces only a logarithmic factor.

Summing the rich-point estimate also gives the following incidence bound.

\begin{corollary}\label{cor:incidence}
Under the hypotheses of Theorem~\ref{thm:main}, every finite set $X\subset \mathbb{F}^3$ satisfies
\[
I(X,\mathcal L)\ll |X|^{1/2}(n^{3/2}+ns)^{1/2}+|X|+n.
\]
In particular, if $s\le C\sqrt n$, then $I(X,\mathcal L)\ll_C |X|^{1/2}n^{3/4}+|X|+n$.
\end{corollary}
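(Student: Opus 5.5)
The plan is to obtain Corollary~\ref{cor:incidence} by a dyadic decomposition of $X$ according to richness, applying Theorem~\ref{thm:main} to each dyadic class.

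First we split off the points of low multiplicity. Write $I(X,\mathcal L)=\sum_{u\in X} r_{\mathcal L}(u)$. The points with $r_{\mathcal L}(u)\le 2$ contribute at most $2|X|$, which is absorbed by the $|X|$ term.

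Next we handle the remaining points with a threshold $k_0\ge3$, to be chosen later. Points with $3\le r_{\mathcal L}(u)<k_0$ contribute at most $k_0|X|$. For points with $r_{\mathcal L}(u)\ge k_0$, we sum over dyadic ranges $2^j\le r<2^{j+1}$ with $2^j\ge k_0/2$. By Theorem~\ref{thm:main}, each range contains at most $R_{2^j}(\mathcal L)\ll A/4^j+n/2^j$ points, where $A:=n^{3/2}+ns$. Such a range therefore contributes
\[
\ll 2^j\left(\frac{A}{4^{j}}+\frac n{2^j}\right)=\frac{A}{2^j}+n .
\]
The first term sums geometrically to $O(A/k_0)$. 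The second term must not be allowed to sum to $n$ times the number of dyadic scales, which would cost a logarithm. To avoid this, estimate the contribution of the points with $r\ge k_0$ directly as
\[
\sum_{u:\,r(u)\ge k_0} r(u)=\sum_{k\ge1}\bigl|\{u: r(u)\ge \max(k,k_0)\}\bigr|,
\]
and bound each summand by $\min\{|X|,\,R_{\max(k,k_0)}(\mathcal L)\}$. The contribution of $n/k$ then still gives a harmonic sum. The cleaner fix is to keep the two terms of the rich-point bound in separate regimes. Wherever $n/k$ dominates, i.e.\ $k\le k_1:=A/n=\sqrt n+s$, we have $R_k\ll A/k^2$ since $n/k\le A/k^2$ there. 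For $k>k_1$ we have $R_k\ll n/k$, but a point of richness $r$ forces $r$ lines through it. Counting pairs (point, line) with $r>k_1$ directly, each line contains at most $|X|$ points, and the total contribution is bounded by $n\cdot\#\{\text{such points on a line}\}$. More simply, the high-richness points number at most $O(n/k_1)\le O(\sqrt n)$. A standard double count handles them: two lines meet in at most one point, so for points of richness $\ge k$ we have $\sum r(u)\ll n + R_k^2$ by Cauchy–Schwarz on pairs of lines. With $R_k\ll n/k$ and $k>\sqrt n$, this gives $R_k^2\ll n$, so these points contribute $O(n)$.

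Finally we optimize the threshold. Collecting the bounds gives
\[
I(X,\mathcal L)\ll |X|+k_0|X|+\frac{A}{k_0}+n .
\]
Choosing $k_0=\max\{3,(A/|X|)^{1/2}\}$ yields $|X|^{1/2}A^{1/2}+|X|+n$ when $k_0\le k_1$. If $(A/|X|)^{1/2}>k_1$, then all the points are in the high-richness regime, and those are already bounded by $O(n)$. The special case $s\le C\sqrt n$ follows by substituting $A\ll_C n^{3/2}$. The main care needed is the regime switch at $k_1=A/n$, which avoids a $\log$ loss from the $n/k$ term. I expect that regime switch to be the only delicate step.
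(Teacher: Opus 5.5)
Your proposal is correct and is essentially the paper's argument. Points of richness at least of order $\sqrt n$ carry only $O(n)$ incidences, by the Cauchy--Schwarz count over lines (two points lie on at most one line). Below that threshold the $n/k$ term is absorbed into $(n^{3/2}+ns)/k^2$, and summing the tail from $k_0\approx\sqrt{(n^{3/2}+ns)/|X|}$ finishes. The only difference is where you cut off the highly rich points: the paper cuts at $\lceil\sqrt{2n}\rceil$ and counts them with Lemma~\ref{lem:large-richness}, while you cut at $k_1=\sqrt n+s$ and count them with Theorem~\ref{thm:main} at $k=k_1$, which works equally well.

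One wording slip: when $\sqrt{(n^{3/2}+ns)/|X|}>k_1$, it is not true that all points are highly rich. No case split is needed, though, because $I(X,\mathcal L)\ll k_0|X|+(n^{3/2}+ns)/k_0+n$ holds for every $k_0\ge3$; the middle range is simply empty when $k_0\ge k_1$.
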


\subsection{Background and main ideas}

The proof of Theorem~\ref{thm:distance-even} passes through the plane. The extraction theorem of \cite{PhamShenXue} produces a planar set whose pairwise quadratic distances are realized in the original set. When $Q$ has Witt index $m-1$, the extracted binary form is anisotropic, so Theorem~\ref{thm:distance} applies. The proof of this theorem rests on the rich-point estimate
in Theorem~\ref{thm:main}. {For split forms, we refine the extraction so that the planar
image has at least $p$ points and contains a pair at nonzero
distance, unless the original set is an affine maximal totally
isotropic subspace. The key identity compares the numbers of pairs
in common affine $R$-cosets and common affine $R^\perp$-cosets.}

{For anisotropic binary forms, }the Elekes--Sharir correspondence \cite{ES} relates pairs of equal distances to direct isometries. Following its finite-field form in \cite{BIP}, we associate $|E|^2$ lines to the ordered pairs in $E^2$. Isometries other than translations correspond to intersection points of these lines. Translations contribute at most $|E|^3$ to the distance energy. Each plane contains at most $|E|$ of the lines, and each quadric contains at most $2|E|$. Theorem~\ref{thm:main} controls multiplicities at least three, while Koll\'ar's theorem \cite[Corollary~40]{Kollar} controls multiplicity two. Summation and the Cauchy--Schwarz inequality give the first bound in Theorem~\ref{thm:distance} when $|E|\mathrel{<} p/\sqrt{6}$. Averaging over subsets of proportional size extends it to $|E|\le p$. Independent sampling of the associated lines gives the estimate for larger sets.

{For the split form, zero distances can occur between distinct points.
We count only nonzero distances and use a reflected line family.
The number of lines in a plane is controlled by the largest
horizontal and vertical fibres of the point set. A dyadic
decomposition selects fibres of comparable sizes. If one isotropic
line contains many selected points, a point outside that line
already determines many distances. Otherwise, the energy estimate
applies. The dyadic selection accounts for the additional
logarithm near $|E|=p^m$.}

The proof of Theorem~\ref{thm:main} follows the algebraic part of the Guth--Katz strategy: polynomial vanishing and derivative conditions identify planar configurations, and induction controls the remaining lines \cite[Section~4]{GK} (see also \cite{Guth}). We work over the algebraic closure and use interpolation with multiplicities. Lewko's construction with a slope variable \cite[Sections~2 and~4]{Lewko} motivates our separate degree bounds in the point and direction variables. Here there are two direction variables, and the selected directions at a point may lie on an algebraic curve. Suppose that there are too many rich points. We construct a polynomial $P(x,y,z,a,c)$ with radial vanishing at those points. A minimal-degree argument then places the selected directions at each retained point on a curve of controlled degree. The preliminary rich-point estimate uses the joints theorem \cite{CarberyIliopoulou} and a selected-plane estimate that we deduce from Koll\'ar's bounds for lines on low-degree surfaces \cite{Kollar}.

For a line $\ell(t)=(t,at+b,ct+f)$, its lift $\widetilde{\ell}(t)=(t,at+b,ct+f,a,c)$ also records its direction. The central difficulty is to control positive-dimensional families of these lifts in $\Sing V\mathopen (P )$ through successive orders of vanishing. Their parameters cannot be counted by a bound for isolated solutions. Moreover, differentiation need not preserve the original radial conditions, and a separate exceptional-line bound at every order would be too large. We keep $P$ fixed and process lines of the smallest remaining order of vanishing. The corresponding derivatives belong to the full space of bounded-degree polynomials vanishing on the remaining lifts. These spaces increase as lines are removed. Using all their evaluations and all pairs of gradients gives ascending ideals, so Proposition~\ref{prop:nested-base-budget} bounds the exceptional parameters once over the entire argument.

A Wronskian, formed from one fixed space of polynomials in the parameter $t$, controls the incidences where the required derivative identities fail. At a nonisolated parameter, Lemma~\ref{lem:local-beta-geometry} combines four direction-curve expansions with an actual reduced curve in the locus of the prescribed tangent planes. The expansions satisfy the local surface equations only through second order; the actual curve supplies the tangent-plane identities. This forces each retained direction to be an inflection point of its original nonlinear component or to belong to a pencil in a fixed plane. These inflection points and planes are also counted only once.

The remaining planar incidences are bounded by an essential external input: Lewko's point--line incidence theorem over arbitrary fields \cite[Theorem~1]{Lewko}. Over prime fields, the same estimate is also given by Miao and Xie \cite[Theorem~1.1]{MiaoXie}. The positive-characteristic term is retained throughout the argument and estimated using the size hypothesis. The degree choices make
the exceptional family a sufficiently small fraction of the original lines and make the other contributions smaller than the selected incidence count. Induction applies to the exceptional lines with the original plane bound $s$. This proves Theorem~\ref{thm:main}, while summing its rich-point bound gives Corollary~\ref{cor:incidence}.

{
To explain the parameter balance, consider $M$ points, each incident to at least $k$ of the $n$ lines, and let $d$ and $e$ denote the degree bounds in the point and direction variables, respectively. Interpolation, the exceptional-line bound, and the incidence losses lead to the requirements
\[
 d^3\gtrsim Me,\qquad d^2e^2\ll n,\qquad  \frac{Mk}{n}\gg\frac de,\qquad e^2\ll k.
\]
Ignoring fixed constants and rounding, take $e\asymp n^{3/8}M^{-1/4}$ and $d\asymp n^{1/8}M^{1/4}$. Then both $(Mk/n)/(d/e)$ and $k/e^2$ have size $k\sqrt M/n^{3/4}$, giving the threshold $M\gg n^{3/2}/k^2$. The precise choices and the remaining terms are handled in Section~\ref{sec:endpoint-assembly}.
}

Section~\ref{sec:preliminaries} records the results from the
literature used below. Section~\ref{sec:plane-energy} proves the
selected-plane and preliminary rich-point estimates.
Sections~\ref{sec:interpolation}--\ref{sec:removal-by-order}
prove the interpolation and geometric statements described above.
Section~\ref{sec:endpoint-assembly} gives the complete proof of
Theorem~\ref{thm:main}, including the degree choices and the
induction. Section~\ref{sec:applications} proves
Theorem~\ref{thm:distance}{, establishes the split planar
estimate in Proposition~\ref{app:prop-split-distance},} and then deduces
Corollary~\ref{cor:incidence}.
Finally, Section~\ref{sec:even-dimensional-distances} combines
the planar {estimates with the extraction theorem and its split refinement} to prove
Theorem~\ref{thm:distance-even}.

\section{Preliminaries}\label{sec:preliminaries}

We collect the results from the literature that will be used in the proofs. The incidence and surface estimates will be used to bound rich
points and the number of equal-distance quadruples. The intersection bounds and binomial estimates will be used to count exceptional parameters and to pass to smaller configurations.

Algebraic varieties and irreducible components are considered over an algebraic closure of the ground field $\mathbb{F}$. A plane containing two distinct $\mathbb{F}$-lines is defined over $\mathbb{F}$. Hence, extending scalars preserves the bound on the number of lines in a plane. Affine lines are identified with their projective closures when intersection theory is used. Constants implicit in $\ll$ are absolute unless a dependence is displayed.

\subsection{Incidence theorems}

Let $\mathbb F$ be a field, and let $\mathcal L$ be a finite set of affine lines in $\mathbb F^3$. A point $x\in\mathbb F^3$ is called a \emph{joint} of $\mathcal L$ if there exist three lines of $\mathcal L$ passing through $x$ whose direction vectors are linearly independent over $\mathbb F$. We will use the following result on joints over arbitrary fields, proved by Carbery and Iliopoulou \cite[Theorem~1.1] {CarberyIliopoulou}.

\begin{lemma} \label{thm:background-joints}
Let $\mathbb F$ be any field, and $\mathcal L$ be a set of $n$ affine lines in $\mathbb F^3$. Then the number of joints of $\mathcal L$ is $O(n^{3/2})$.
\end{lemma}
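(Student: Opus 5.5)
We would give the polynomial-method argument of Guth--Katz and Quilodr\'an, adjusted to work over an arbitrary field. First we pass to the algebraic closure $\overline{\mathbb F}$. Lines defined over $\mathbb F$ stay lines, and linear independence of direction vectors is unaffected, so every joint over $\mathbb F$ is still a joint over $\overline{\mathbb F}$. Hence we may assume $\mathbb F$ is algebraically closed.

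\textbf{Key lemma.} We aim to show that for every finite nonempty family $\mathcal L$ with joint set $J$, some line of $\mathcal L$ contains at most $D:=\lceil C|J|^{1/3}\rceil$ points of $J$, where $C$ is absolute.

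Choose a nonzero polynomial $P\in\mathbb F[x,y,z]$ of minimal degree vanishing on $J$. Since $\binom{D+3}{3}>|J|$ for suitable $C$, a dimension count gives $\deg P\le D$.

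Suppose, for contradiction, that every line of $\mathcal L$ contains more than $D$ joints. Then $P$ restricted to each line is a univariate polynomial of degree at most $D$ with more than $D$ zeros, so $P$ vanishes on every line of $\mathcal L$. At a joint $u$, the three lines through $u$ with independent directions $v_1,v_2,v_3$ give $\nabla P(u)\cdot v_i=0$ for each $i$. Hence $\nabla P(u)=0$, and every partial derivative $\partial_iP$ vanishes on $J$. Each $\partial_iP$ has degree less than $\deg P$, so minimality forces $\partial_xP=\partial_yP=\partial_zP=0$ identically.

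This is the main obstacle. In characteristic zero it already contradicts $P\neq0$ with $P$ nonconstant; note $P$ is nonconstant because $J\neq\emptyset$, and if $J$ is empty there is nothing to prove. In characteristic $p>0$, vanishing of all partials only says that $P$ is a polynomial in $x^p,y^p,z^p$. Here we use that $\overline{\mathbb F}$ is perfect: every coefficient is a $p$-th power, so $P=R^p$ for some $R$ with $\deg R=\deg P/p<\deg P$. Also $R$ vanishes on $J$, because $R(u)^p=P(u)=0$. This again contradicts the minimality of $\deg P$, and the key lemma follows.

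\textbf{Induction.} Let $J(\mathcal L)$ denote the set of joints of $\mathcal L$. By the key lemma, pick $\ell\in\mathcal L$ containing at most $C|J(\mathcal L)|^{1/3}+1$ joints. Every joint of $\mathcal L$ not on $\ell$ is still a joint of $\mathcal L\setminus\{\ell\}$, since the three witnessing lines avoid $\ell$ (any line through such a point is different from $\ell$). Therefore
\[
|J(\mathcal L)|\le |J(\mathcal L\setminus\{\ell\})|+C|J(\mathcal L)|^{1/3}+1 .
\]
Removing lines one at a time, and using that the joint count never increases along the way, we get $|J|\le n\bigl(C|J|^{1/3}+1\bigr)$. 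This gives $|J|^{2/3}\ll n$, i.e.\ $|J|\ll n^{3/2}$, with an absolute constant.
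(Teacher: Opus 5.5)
Your proof is correct. The paper contains no argument of its own for this lemma: it quotes the bound from Carbery and Iliopoulou \cite[Theorem~1.1]{CarberyIliopoulou} and uses it as a black box. So the comparison is between a citation and your self-contained proof.

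Your proof is the Quilodr\'an/Kaplan--Sharir--Shustin minimal-degree argument in dimension three. You correctly isolate and resolve the only field-sensitive step. In characteristic $p$ a nonconstant polynomial can have identically zero gradient. Over the perfect field $\overline{\mathbb F}$, however, such a minimal-degree $P$ equals $R^p$ with $R\neq0$ of smaller degree and still vanishing on $J$, which contradicts minimality. The passage to $\overline{\mathbb F}$ is genuinely needed here: over $\mathbb F_p(s)$ the polynomial $sx^p+y^p$ has zero gradient but is not a $p$th power. It also costs nothing, since joints over $\mathbb F$ remain joints over $\overline{\mathbb F}$.

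The bookkeeping is sound:
\begin{itemize}
\item $C=2$ gives $\binom{D+3}{3}>|J|$.
\item The inclusion $J(\mathcal L')\subseteq J(\mathcal L)$ for subfamilies justifies bounding each step's loss by $C|J(\mathcal L)|^{1/3}+1$.
\item Summing over the $n$ removals gives $|J|\le n(C|J|^{1/3}+1)$, hence $|J|\le(2Cn)^{3/2}$.
\end{itemize}

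What each route buys: the citation is shorter, while your route is elementary and yields an explicit constant. Your perfectness trick is specific to this setting. The paper's own minimal-degree argument in Proposition~\ref{prop:direction-curve} uses only derivatives in the point variables. A polynomial in $\overline{\mathbb F}[x^p,y^p,z^p,a,c]$ need not be a $p$th power, so that argument relies on $d<p$ instead. The joints lemma needs no such restriction, consistent with its statement over arbitrary fields.
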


For incidences in a fixed plane, we use the following bound of Lewko \cite[Theorem~1]{Lewko}. Over prime fields, the same estimate is proved by Miao and Xie \cite[Theorem~1.1]{MiaoXie}.

\begin{lemma} \label{thm:planar-incidences}
Let $\mathbb{F}$ be a field of characteristic zero or characteristic $p>2$. For a finite set $X$ of points and a finite set $\mathcal L$ of lines in $\mathbb{F}^2$, the following bound holds in positive characteristic:
\[
I(X,\mathcal L)\ll (|X||\mathcal L|)^{2/3}+|X|+|\mathcal L| +\frac{|X||\mathcal L|}{p}.
\]
In characteristic zero, the same bound holds with the last term omitted.
\end{lemma}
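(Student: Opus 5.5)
This lemma is an external input, and I would not reprove it in the paper. I would take it directly from Lewko \cite[Theorem~1]{Lewko} for arbitrary fields of characteristic zero or $p>2$, and from Miao--Xie \cite[Theorem~1.1]{MiaoXie} over prime fields. The only checks needed are translations of conventions. First, the lines are affine lines in $\mathbb F^2$ rather than projective ones; one can discard the line at infinity, which only decreases $I$. Second, the error term $|X||\mathcal L|/p$ in positive characteristic is the one appearing there; in characteristic zero, Lewko's bound has no such term. I would also note that the trivial cases ($|X|\le 1$, $|\mathcal L|\le 1$, or $I\le |X|+|\mathcal L|$) are absorbed by the linear terms.

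If a self-contained argument were wanted, I would follow the strategy indicated in the introduction, namely the slope-variable lift. In that lift, a point--line incidence $(u,\ell)$ with $u=(x,y)$ and $\ell:\,y=ax+b$ is recorded as a point $(x,y,a)$ in $\mathbb F^3$. One then runs a polynomial-method argument with separate degree bounds $d$ in the point variables and $e$ in the slope variable, interpolating a polynomial that vanishes radially on the rich lifted points. Balancing the parameters then gives $(|X||\mathcal L|)^{2/3}$. I would first reduce, by dyadic pigeonholing, to the case where every point of $X$ lies on roughly $k$ lines of $\mathcal L$. Then I would show that $|X|\ll |\mathcal L|^2/k^3+|\mathcal L|/k+\ldots$, which sums to the stated bound.

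The main obstacle is positive characteristic. There, degree-based vanishing arguments can fail, because derivatives of degree-$\ge p$ polynomials may vanish identically. The bound must also genuinely lose, since the full configuration $\mathbb F_p^2$ with all lines attains $I=|X||\mathcal L|/p$. I would handle this by keeping all degrees below $p$ whenever $|X||\mathcal L|\lesssim p^{3}$, so that the algebraic steps are valid. In the complementary range I would use the Cauchy--Schwarz/Fourier bound $I\le |X||\mathcal L|/p+\sqrt{p|X||\mathcal L|}$. Both regimes together give the stated estimate with the $|X||\mathcal L|/p$ term, and since these details are exactly Lewko's, the paper simply cites them.
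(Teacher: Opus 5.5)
Your proposal matches the paper: the lemma is not proved there but imported directly from Lewko's Theorem~1 for arbitrary fields (with Miao--Xie as the prime-field reference), exactly as you suggest. Your optional self-contained sketch is not needed, and it would not suffice as written: the Fourier/Vinh step yields the $|X||\mathcal L|/p$ term only over $\mathbb F_p$ itself, whereas the paper applies the lemma over $\overline{\mathbb F}$.
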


We will apply Lemma~\ref{thm:planar-incidences} in the proof of Theorem~\ref{thm:main}, after grouping lines into planes. In positive characteristic, the term involving $p$ is retained when summing over these planes.

\subsection{Surface and intersection estimates}

The preliminary rich-point estimate in Section~\ref{sec:selected-planes} uses the following bounds of Koll\'ar for lines on algebraic surfaces. We state separately the estimates needed for nonruled and ruled components, with references for each assertion.

{
An irreducible projective surface is called \emph{ruled}
if it is covered by an algebraic family of lines parametrized
by an irreducible projective curve. These lines are the
\emph{rulings} of the presentation. A line on the surface
is \emph{special} for this presentation if it meets every
ruling. An irreducible nonruled surface of degree $d$
contains at most $d^4$ lines
\cite[Definition~52, Proposition~53, and paragraph~54]{Kollar}.
}

\begin{lemma}
\label{thm:background-surfaces}
The following statements concern surfaces and curves in $\mathbb P^{3}_{\overline{\mathbb{F}}}$.
\begin{enumerate}
\item \cite[Theorem~13 and paragraph~39]{Kollar} \label{item:auxiliary-surface}
If $S\subset\mathbb P^3$ is irreducible, nonruled, and of degree $d$, with $\operatorname{char}{\mathbb{F}}=0$ or $\operatorname{char}\mathbb{F}>d$, there is a surface $T$ of degree at most $11d$, not containing $S$, which contains every line on $S$.

\item \cite[Proposition~43, $n=3$]{Kollar} \label{item:subcurve-genus}
If surfaces of degrees $a,b$ have no common component, every reduced subcurve $C$ of their complete intersection satisfies
\[
p_a(C)\le 1+\frac12ab(a+b-4).
\]
Here $p_a(C)$ denotes the arithmetic genus of $C$, characterized by the fact that the Hilbert polynomial of $C$ is
\[
(\deg C)t+1-p_a(C).
\]

\item \cite[paragraph~54 and Proposition~55(4)--(5)]{Kollar} \label{item:ruled-pairs}
Let $S$ be irreducible and ruled of degree $d$, excluding planes, cones, and smooth quadrics. There are at most two special lines; each other line meets at most $d-2$ other nonspecial lines. Consequently, any $n$ lines on $S$ determine at most $n(d-2)/2+2n$ intersecting unordered pairs.
\end{enumerate}
\end{lemma}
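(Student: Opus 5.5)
The first two items need no new argument: they are the cited statements of Koll\'ar, specialised to $\mathbb P^3$. For \ref{item:auxiliary-surface} I would check that the characteristic hypothesis $\operatorname{char}\mathbb F=0$ or $\operatorname{char}\mathbb F>d$ is the one under which \cite[Theorem~13]{Kollar} is proved. I would also check, via \cite[paragraph~39]{Kollar}, that the auxiliary surface there has degree at most $11d$ and does not contain $S$. For \ref{item:subcurve-genus} I would take \cite[Proposition~43]{Kollar} with ambient dimension $n=3$ and two hypersurfaces of degrees $a,b$. I would confirm that the bound $p_a(C)\le 1+\tfrac12 ab(a+b-4)$ is stated there for every reduced subcurve of the complete intersection, and not only for the complete intersection itself. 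The normalisation of $p_a$ through the Hilbert polynomial $(\deg C)t+1-p_a(C)$ is the standard one, so no translation is needed.

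The only genuine deduction is the counting statement at the end of \ref{item:ruled-pairs}. I take from \cite[paragraph~54 and Proposition~55(4)--(5)]{Kollar} two facts about an irreducible ruled surface $S$ of degree $d$ that is not a plane, a cone, or a smooth quadric: it has at most two special lines, and every nonspecial line on $S$ meets at most $d-2$ other nonspecial lines on $S$. Fix $n$ distinct lines on $S$ and split them into the set $\mathcal S$ of special lines, with $|\mathcal S|\le 2$, and the set $\mathcal N$ of nonspecial lines. I would then bound the intersecting unordered pairs in two groups.

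\emph{Pairs containing a special line.} Each line of $\mathcal S$ lies in at most $n-1$ such pairs. This gives at most $2(n-1)\le 2n$ pairs.

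\emph{Pairs inside $\mathcal N$.} Form the intersection graph on $\mathcal N$. Its maximum degree is at most $d-2$, so by the handshake identity it has at most $|\mathcal N|(d-2)/2\le n(d-2)/2$ edges.

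Adding the two groups gives at most $n(d-2)/2+2n$ intersecting pairs, as claimed. The main point needing care is to check that Koll\'ar's bound "meets at most $d-2$ other nonspecial lines" is stated for all nonspecial lines of the given ruled presentation, rulings included, so that no line escapes both cases. If the cited statement covered only non-ruling lines, I would treat the rulings separately. Two distinct rulings of a non-cone ruled surface other than a smooth quadric are disjoint, and a ruling meets a non-special non-ruling line only boundedly often by the same proposition. In that case the constant $2n$ might need to be adjusted, but the linear-in-$n$ form would be unchanged.
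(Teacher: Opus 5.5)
Your proposal is correct and matches the paper. The paper gives nothing beyond the citations for (i) and (ii), and it leaves the count in (iii) as the same handshake argument you give: at most $2(n-1)$ pairs through the at most two special lines, plus at most $n(d-2)/2$ edges among the nonspecial lines.

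Your fallback paragraph is unneeded, and its claim that distinct rulings of such a surface are disjoint is false. First, a non-ruling line on $S$ meets every ruling, since its strict transform dominates the base curve, so every nonspecial line is already a ruling. Second, distinct rulings can meet. For example, on the ruled cubic $x^2z=y^2w$ the rulings through $[\pm\lambda:\mu:0:0]$ both pass through $[0:0:\mu^2:\lambda^2]$ on the double line $x=y=0$. These ruling--ruling intersections are exactly what the bound $d-2$ controls.
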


The degree restriction in part~\ref{item:auxiliary-surface} concerns each irreducible surface to which the auxiliary-surface theorem is applied. In positive characteristic, all such degrees in the applications below are less than $p$.

For the distance application, we also need to count points incident to at least two lines. The following lemma gives this bound
under restrictions on the number of lines in planes and quadrics. It is due to Koll\'ar \cite[Corollary~40]{Kollar}.

\begin{lemma} \label{thm:background-two-rich}
Let $\mathcal L$ consist of ${n}$ distinct lines in $\mathbb P^3_{\mathbb F_q}$, where $q=p^a$. Suppose every plane {in
$\mathbb P^3_{\overline{\mathbb F_q}}$} contains at most $c\sqrt {n}$ lines of $\mathcal L$ and every quadric {in
this space} contains at most $2c\sqrt n$ lines of $\mathcal L$. If $q=p$ or $n<11p^2/6$, then
\[
\bigl|\{x:r_{\mathcal L}(x)\ge2\}\bigr| \le (29.1+c/2){n}^{3/2}.
\]
\end{lemma}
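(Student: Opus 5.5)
This lemma is quoted from Koll\'ar \cite[Corollary~40]{Kollar}, so the plan is to reconstruct his argument rather than find a new one. The idea is to trap all the lines in a surface of degree about $\sqrt n$ and then count intersecting pairs of lines component by component on that surface.

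\emph{Step 1: interpolation.} First find a nonzero homogeneous polynomial $F$ of degree $d\approx\sqrt{6n}$ whose zero set $S$ contains every line of $\mathcal L$. This is a dimension count. Vanishing on a line imposes at most $d+1$ linear conditions, and $\binom{d+3}{3}>n(d+1)$ once $d\gtrsim\sqrt{6n}$. The hypothesis $q=p$ or $n<11p^2/6$ is used here and later. It guarantees that $d$, and the degrees of all components of $S$, stay below the characteristic in the cases where Lemma~\ref{thm:background-surfaces}\ref{item:auxiliary-surface} is applied; when $q=p$ one instead reduces to the regime $n<p^2$ by a separate trivial count.

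\emph{Step 2: decomposition.} Decompose $S=\bigcup S_i$ into irreducible components of degrees $d_i$ with $\sum d_i\le d$. Assign each line to a component containing it. Each point with $r_{\mathcal L}(x)\ge2$ is counted by a pair of lines meeting at $x$; it suffices to bound the number of points through two lines, not the number of pairs.

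\emph{Step 3: pairs of lines on different components.} A line $\ell\subset S_i$ not lying in $S_j$ meets $S_j$ in at most $d_j$ points. Hence a line meets lines from other components in at most $d$ points in total, giving at most $nd\ll n^{3/2}$ such points.

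\emph{Step 4: pairs within one component.} Split by the type of $S_i$.
\begin{itemize}
\item \emph{Planes.} A plane carrying $n_i\le c\sqrt n$ lines contributes at most $\binom{n_i}{2}\le n_i\, c\sqrt n/2$ points. Summing gives $(c/2)n^{3/2}$, which accounts for the $c/2$ in the constant.
\item \emph{Quadrics.} For a smooth quadric, the intersection points are those between the two rulings, at most $a\cdot b$ with $a+b\le 2c\sqrt n$. Because the assigned lines also lie in the surface $S$ of degree $d$, one can bound the contribution by roughly $n_i d$. A cone contributes at most one point, its vertex.
\item \emph{Ruled components.} For other ruled $S_i$, Lemma~\ref{thm:background-surfaces}\ref{item:ruled-pairs} gives at most $n_i(d_i-2)/2+2n_i\le n_i d$ pairs.
\item \emph{Nonruled components.} Use Lemma~\ref{thm:background-surfaces}\ref{item:auxiliary-surface}: all lines on $S_i$ lie in $S_i\cap T$ with $\deg T\le 11d_i$. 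The reduced curve $C$ formed by the lines is a subcurve of this complete intersection. By \ref{item:subcurve-genus}, $p_a(C)\le 1+\tfrac12\cdot 11d_i^2(12d_i-4)$. For a union of $n_i$ lines with $N$ nodes-type intersections one has $p_a(C)\ge N-n_i+1$, so $N\ll n_i+d_i^3$. Summing, $\sum d_i^3\le d^3\ll n^{3/2}$.
\end{itemize}

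\emph{Main obstacle.} The main obstacle is the nonruled step. One must relate the number of intersection points of a reduced union of lines to its arithmetic genus, and concurrent lines through one point lower the genus count. The correct inequality is that the number of distinct singular points is at most $p_a+n_i-1$, proved by building up the union one line at a time. Alongside this, the constants have to be tracked carefully, with $d=\sqrt{6n}$ giving a leading term about $\tfrac{11\cdot12}{2}\cdot6^{3/2}n^{3/2}$ split appropriately, to reach $29.1$.
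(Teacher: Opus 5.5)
The paper gives no proof of this lemma; it is quoted from Koll\'ar's Corollary~40. Your reconstruction is essentially the argument the paper itself uses for Proposition~\ref{thm:selected-plane-energy}: interpolation at degree $\lfloor\sqrt{6n}\rfloor$, factorization, the ruled-pair bound, and an auxiliary surface of degree $11d_i$ with the complete-intersection genus bound. There that route only yields $2000n^{3/2}+ns$ and needs $n<p^2/6$, and it does not reach the statement as written.

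First, the characteristic claim in your Step~1 is false. The hypothesis $n<11p^2/6$ only gives $\sqrt{6n}<\sqrt{11}\,p$. Reducing to $n<p^2$ when $q=p$ still allows $\sqrt{6n}$ up to $\sqrt6\,p$. So components of degree $\ge p$ can occur, while Lemma~\ref{thm:background-surfaces}\ref{item:auxiliary-surface} requires $\operatorname{char}\mathbb F>d_i$. This is not a removable technicality:
\begin{itemize}
\item Over $\mathbb F_{p^2}$, the Hermitian surface $x_0^{p+1}+x_1^{p+1}+x_2^{p+1}+x_3^{p+1}=0$ has degree $p+1$, is not ruled, and contains $(p+1)(p^3+1)$ lines.
\item For $p\ge5$ this number exceeds $11(p+1)^2$, the largest possible degree of its intersection with a surface of degree at most $11(p+1)$ not containing it. So no auxiliary surface $T$ exists.
\item Any $n\ge(p+1)^2/6$ of its lines satisfy the plane and quadric hypotheses with $c=\sqrt6$, since plane and quadric sections have degree $p+1$ and $2(p+1)$.
\item For such $n$, nothing prevents your interpolant from being divisible by the Hermitian form.
\end{itemize}
Your argument therefore covers only characteristic zero and $n<p^2/6$. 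For $q=p$, the remaining range does follow from the trivial count $|\mathbb P^3(\mathbb F_p)|=p^3+p^2+p+1$. For $q=p^a$ with $a\ge2$ and $p^2/6\le n<11p^2/6$, it gives nothing.

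Second, the constant. Your nonruled estimate gives $66\sum_i d_i^3\le 66\cdot 6^{3/2}n^{3/2}\approx 970\,n^{3/2}$. Your method cannot exclude a single nonruled component of degree close to $\sqrt{6n}$, where its bound is $66d^3$. So no splitting turns your leading term into $29.1$.

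What your sketch actually proves is $|\{x:r_{\mathcal L}(x)\ge2\}|\le (C+c/2)n^{3/2}$ with $C$ of order $10^3$, in characteristic zero or for $n<p^2/6$. This weaker form happens to be all the paper uses, since its applications have $n<p^2/6$ and need only $O(n^{3/2})$.

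To get $29.1$ and the range $n<11p^2/6$, the degree-independent bound $1+\tfrac12ab(a+b-4)$ is too weak. You need a genus bound that also depends on $\deg C$ and on the least degree of a surface containing $C$. Then the auxiliary-surface theorem is invoked only on surfaces of small degree; note that $n<11p^2/6$ is exactly $\sqrt{6n/11}<p$.

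Two smaller repairs are also needed:
\begin{itemize}
\item In the quadric case, ``roughly $n_id$'' is unjustified and fails for $c>2\sqrt6$. Use the quadric hypothesis instead: $ab\le n_i^2/4\le n_i\,c\sqrt n/2$. Together with the planes, this gives exactly the $(c/2)n^{3/2}$ term.
\item In Step~3, assign each line to the \emph{first} component containing it. Otherwise two meeting lines, assigned to $S_i$ and $S_j$ but each contained in the other's component, escape your count.
\end{itemize}
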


{
When applying this lemma to affine lines, we take their
projective closures.
}

The polynomial arguments below lead to common zero sets that may contain both isolated points and positive-dimensional components. The bound for isolated points arising from an ascending chain of ideals is proved in Proposition~\ref{prop:nested-base-budget}. We recall the intersection-theoretic notation used in its proof (see \cite{Fulton}).

On a product of projective spaces, the intersection product is computed in the Chow ring
\[
A^*(\mathbb P^a\times\mathbb P^b) \cong \mathbb Z[H_1,H_2]/(H_1^{a+1},H_2^{b+1}),  \qquad  \int H_1^aH_2^b=1.
\]
Here $A^*$ denotes the graded ring of algebraic cycles modulo rational equivalence, $H_1,H_2$ are the pullbacks of the hyperplane classes from the two factors, and $\int$ denotes the degree of a zero-cycle.

We also use the classical B\'ezout theorem for plane curves: if two plane curves have no common component over the algebraic closure, then the sum of their local intersection multiplicities is at most the product of their degrees. These facts are valid in every characteristic.

\subsection{Binomial estimates}

Random sampling is used to reduce the size of a line family while retaining sufficiently many incidences. We record the probability bounds needed for this step. {These are the Chernoff bounds from Alon and Spencer \cite[Appendix~A.1]{AlonSpencer}.}

\begin{lemma}
\label{lem:binomial-tails}
Let ${Z}$ be a finite sum of independent Bernoulli random variables with $\mu=\mathbb E {Z}>0$. For $0<\delta\le1$,
\[
\mathop{\Pr}\{{Z}\le(1-\delta)\mu\} \le\exp(-\delta^2\mu/2),\qquad \mathop{\Pr}\{{Z}\ge(1+\delta)\mu\} \le\exp(-\delta^2\mu/3).
\]
For every real $t\ge\mu$ one also has $\mathop{\Pr}\{{Z}\ge t\}\le(\mathrm e\mu/t)^t$.
In particular, $\mathop{\Pr}\{{Z}\le\mu/2\}\le\exp(-\mu/8)$.
\end{lemma}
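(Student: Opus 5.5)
The plan is the standard exponential-moment (Chernoff) argument; alternatively one can simply cite \cite[Appendix~A.1]{AlonSpencer}. Write $Z=\sum_i X_i$ with $X_i$ independent Bernoulli of mean $p_i$, so $\mu=\sum_i p_i$. For every real $\lambda$, independence and $1+x\le e^x$ give
\[
\mathbb E\, e^{\lambda Z}=\prod_i\bigl(1+p_i(e^\lambda-1)\bigr)\le \exp\bigl(\mu(e^\lambda-1)\bigr).
\]
Each tail bound then comes from Markov's inequality applied to $e^{\lambda Z}$ with an optimized $\lambda$, followed by an elementary one-variable inequality.

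\emph{Upper tails.} For $t\ge\mu$, take $\lambda=\log(t/\mu)\ge0$. Then
\[
\Pr\{Z\ge t\}\le e^{-\lambda t}\,\mathbb E e^{\lambda Z}\le e^{t-\mu}(\mu/t)^t\le(\mathrm e\mu/t)^t,
\]
which is the third assertion. Specializing to $t=(1+\delta)\mu$ gives
\[
\Pr\{Z\ge(1+\delta)\mu\}\le \exp\bigl(\mu f(\delta)\bigr),\qquad f(\delta)=\delta-(1+\delta)\log(1+\delta).
\]
It then suffices to show $g(\delta):=f(\delta)+\delta^2/3\le0$ on $[0,1]$. We have $g(0)=0$ and $g'(\delta)=-\log(1+\delta)+2\delta/3$, so $g'(0)=0$. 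Since $g''(\delta)=-1/(1+\delta)+2/3$ is increasing, $g'$ is convex. Moreover $g'(1)=2/3-\log2<0$, so convexity gives $g'\le0$ on $[0,1]$, and hence $g\le0$ there.

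\emph{Lower tail.} For $0<\delta<1$, take $\lambda=\log(1-\delta)<0$ and apply Markov to $e^{\lambda Z}$ on the event $\{Z\le(1-\delta)\mu\}$. This gives
\[
\Pr\{Z\le(1-\delta)\mu\}\le \exp\bigl(\mu h(\delta)\bigr),\qquad h(\delta)=-\delta-(1-\delta)\log(1-\delta).
\]
The required inequality $h(\delta)\le-\delta^2/2$ is equivalent to $(1-\delta)\log(1-\delta)\ge-\delta+\delta^2/2$. Both sides vanish at $0$. The derivative of their difference is $-\log(1-\delta)-\delta\ge0$, since $-\log(1-\delta)=\sum_{k\ge1}\delta^k/k\ge\delta$, so the difference is nonnegative on $[0,1)$. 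The endpoint $\delta=1$ is handled directly:
\[
\Pr\{Z\le0\}=\prod_i(1-p_i)\le e^{-\mu}\le e^{-\mu/2}.
\]
Finally, taking $\delta=1/2$ yields $\Pr\{Z\le\mu/2\}\le\exp(-\mu/8)$.

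The only mildly delicate step is the calculus inequality behind the upper-tail constant $1/3$, which genuinely uses $\delta\le1$ through the sign of $g'(1)$. Everything else is routine.
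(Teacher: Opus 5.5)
Your proof is correct: the moment-generating-function bound, the choices $\lambda=\log(t/\mu)$ and $\lambda=\log(1-\delta)$, the convexity argument giving the constant $1/3$, and the separate treatment of $\delta=1$ all check out. The paper gives no proof of its own and only cites Alon and Spencer, Appendix~A.1, whose argument is this same exponential-moment (Chernoff) method, so your write-up is essentially the standard derivation behind that citation.
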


\section{Selected planes and a preliminary rich-point bound}
\label{sec:selected-planes}
\label{sec:plane-energy}

We first bound the number of pairs of incident lines that lie in one
selected plane at each point. Together with the joints theorem, this
gives a preliminary rich-point estimate for the induction in
Section~\ref{sec:endpoint-assembly}. We finish with an elementary
bound for points incident to at least $\sqrt{2n}$ lines, which will
leave only the intermediate range of richness to be treated later.

\begin{proposition}
\label{thm:selected-plane-energy}
Let $\mathcal L$ be a set of $n$ distinct lines in
$\mathbb P^{3}_{\mathbb{F}}$, where $\mathbb{F}$ has characteristic zero,
or characteristic $p>0$ with $n<p^2/6$.
Suppose that every plane over an algebraic closure contains at most $s$ lines of $\mathcal L$. For each point $x$ in a finite set $X$, choose a plane
$\Pi_x$ through $x$, and put
\[
 t(x)=|\{\ell\in\mathcal L:x\in\ell\subset\Pi_x\}|.
\]
If $t(x)\ge3$ for every $x\in X$, then
\begin{equation}
 \sum_{x\in X}t(x)(t(x)-1)\le 2000n^{3/2}+ns.
 \label{eq:selected-plane-energy}
\end{equation}
\end{proposition}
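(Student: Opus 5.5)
The sum $\sum_{x\in X}t(x)(t(x)-1)$ counts ordered pairs $(\ell,\ell')$ of distinct lines with $\ell\cap\ell'=\{x\}$ and $\ell,\ell'\subset\Pi_x$. A pair of distinct intersecting lines spans exactly one plane and meets in exactly one point. So each unordered pair is counted at most twice, and only through the plane it spans. Grouping by planes, write $n_\Pi$ for the number of lines of $\mathcal L$ in $\Pi$ and $X_\Pi=\{x\in X:\Pi_x=\Pi\}$. Then
\[
\sum_{x\in X}t(x)(t(x)-1)=\sum_{\Pi}\sum_{x\in X_\Pi}t(x)(t(x)-1),
\]
and the inner sum is at most $n_\Pi(n_\Pi-1)$. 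I will split the planes into \emph{rich} planes ($n_\Pi\ge K\sqrt n$, with a constant $K$ to be fixed) and \emph{poor} planes.

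\textbf{Rich planes.} Two distinct planes share at most one line. If $N$ planes each contain at least $K\sqrt n$ lines, inclusion--exclusion gives
\[
n\ \ge\ NK\sqrt n-\binom N2 .
\]
Taking $K$ large, this forces $N\le 2\sqrt n/K$. The same overlap bound then gives $\sum_{\text{rich}}n_\Pi\le n+N^2\le n+O(n/K^2)$. Hence
\[
\sum_{\text{rich}}n_\Pi(n_\Pi-1)\le s\sum_{\text{rich}}n_\Pi\le ns+O(n^{3/2}).
\]
I expect this to need a little care to keep the coefficient of $ns$ exactly $1$. One option is to bound $n_\Pi(n_\Pi-1)<n_\Pi s$ and charge the overlap term $N^2s\le N^2n$ to $n^{3/2}$.

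\textbf{Poor planes.} This is where the hypothesis $t(x)\ge3$ and the surface results enter, and it is the main obstacle. I will use the Guth--Katz degree-reduction strategy.
\begin{enumerate}
\item Take a nonzero polynomial of degree $D\asymp\sqrt n$ vanishing on all lines of $\mathcal L$. This is possible by parameter counting (choose $D+1$ points on each line). Factor its zero set into irreducible components.
\item Each line lies in some component. Components of degree $<p$ are available because $n<p^2/6$ gives $D<p$.
\item In each plane component, the poor contributions are at most $\sum n_\Pi^2\le K\sqrt n\sum n_\Pi$. Since there are at most $D$ plane components and they pairwise share at most one line, $\sum n_\Pi\le n+D^2$, so this part is $O(n^{3/2})$.
\item A poor plane that is \emph{not} a component meets the surface in a curve of degree $\le D$. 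Its lines therefore lie among the lines contained in nonplanar components of total degree $\le D$.
\item For a nonruled component $S$ of degree $d$, apply Lemma~\ref{thm:background-surfaces}\ref{item:auxiliary-surface}. The lines on $S$ lie on $S\cap T$ with $\deg T\le 11d$, so there are $O(d^2)$ of them. Their coplanar incidences inside a plane are controlled by B\'ezout, giving $O(d\cdot\#\text{lines})$.
\item For ruled components, Lemma~\ref{thm:background-surfaces}\ref{item:ruled-pairs} gives at most $n_S(d-2)/2+2n_S$ intersecting pairs. The excluded cases need separate treatment: cones (all lines through the vertex), smooth quadrics (at most two lines through a point), and planes (handled above).
\item Summing over components with $\sum d\le D\asymp\sqrt n$ gives $O(n^{3/2})$ intersecting pairs in total from nonplanar components.
\end{enumerate}

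Within this step, the most delicate point is cones. A vertex $x$ may carry many lines, but only $t(x)$ of them lie in $\Pi_x$. A plane through the vertex meets a cone of degree $d$ in at most $d$ lines, so $t(x)\le d$. Summing $t(x)^2\le d\cdot t(x)$ over vertices, and using that lines meeting at a vertex are counted once, gives $O(Dn)=O(n^{3/2})$.

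\textbf{Assembly.} The condition $t(x)\ge3$ is used to discard the smooth-quadric configurations, where $t\le2$. Collecting the rich-plane bound $ns+O(n^{3/2})$ and the $O(n^{3/2})$ bounds from poor planes gives \eqref{eq:selected-plane-energy}. The remaining work is tracking the absolute constants so that they fit under $2000$.
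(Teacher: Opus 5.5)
\textbf{Genuine gap: pairs of lines on different components are never bounded.} Your overall frame matches the paper's: an interpolating surface of degree $\asymp\sqrt n$, lines sorted by the type of irreducible component, and Koll\'ar's lemmas. But your component-by-component counts (plane components, nonruled, ruled, cones) only bound pairs of lines lying on the \emph{same} irreducible component.

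The gap is hidden in your claim that the lines of a non-component poor plane lie on nonplanar components. That claim is false. Such a plane can contain lines from several components, including lines of plane components, since a plane component meets it in a line. So its pairs are not all internal to one component.

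B\'ezout alone does not control these cross pairs. A line $\ell\not\subset S_i$ meets $S_i$ in at most $d_i$ points, and at each such point up to $d_i$ lines of $S_i$ lie in $\Pi_x$. This gives only $n\sum_i d_i^2\le nD^2\asymp n^2$.

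The missing idea, which is the core of the paper's proof, is a vanishing-order count inside the selected plane.
\begin{itemize}
\item Assign each line to the first component $S_i=V(F_i)$ containing it.
\item Suppose $\ell$ is assigned after $S_i$, and $x\in\ell$ with $\ell\subset\Pi_x$. Then $F_i|_{\Pi_x}\ne0$ is divisible by the linear forms of the $t_i(x)$ selected lines of $S_i$ through $x$.
\item Hence $\operatorname{ord}_x(F_i|_\ell)\ge t_i(x)$. Since $F_i|_\ell\not\equiv0$ has degree $d_i$, this gives $\sum_x t_i(x)\le d_i$.
\item Summing over $i$ and $\ell$ bounds all cross pairs by $nD$.
\end{itemize}

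\textbf{The hypothesis $t(x)\ge3$ works through this bound.} Contrary to your assembly step, the hypothesis does not discard smooth-quadric configurations. Here $t(x)$ counts all lines of $\mathcal L$ through $x$ in $\Pi_x$. So one line from each ruling of a smooth quadric meeting at $x$, together with one line from another component, gives $t(x)=3$. A quadric carrying $n_i$ lines can have $\asymp n_i^2$ intersecting pairs, so these cannot be ignored. The paper charges each such quadric-internal pair to a cross pair at the same point.

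\textbf{Two further steps need repair.}

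Nonruled components: B\'ezout gives at most $d$ lines of $S$ in each selected plane. But $\sum_x t_S(x)$ is not bounded by the number of lines, because one line of $S$ may pass through many intersection points. So this route only yields a higher power of $d$. The paper instead obtains $\sum_x\binom{t_i(x)}2\le n_i+66d_i^3$ from three ingredients:
\begin{itemize}
\item the delta invariant $\binom{t_i(x)}2$ of an ordinary planar multiple point;
\item the identity $\sum_x\delta_x(C_i)=p_a(C_i)+n_i-1$;
\item the genus bound of Lemma~\ref{thm:background-surfaces}\ref{item:subcurve-genus}, applied to the curve of lines inside $S\cap T$.
\end{itemize}

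Rich planes: your proposed charge fails, because $N^2s\le 4ns/K^2$ is not $O(n^{3/2})$ when $s\gg\sqrt n$. Instead, assign each line to the first rich plane containing it, and let $a_\Pi$ be the number of lines assigned to $\Pi$.
\begin{itemize}
\item Ordered pairs in $\Pi$ whose first line is assigned to $\Pi$ number at most $a_\Pi(s-1)$. Since $\sum_\Pi a_\Pi\le n$, these sum to at most $ns$.
\item Each rich plane contains at most $N$ unassigned lines. These contribute at most $N\sum_\Pi n_\Pi\le N(n+N^2)=O(n^{3/2}/K)$ pairs in total.
\end{itemize}
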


\begin{proof}
The assertion is empty for $n\le1$. We work over an algebraic closure
and put $D=\lfloor\sqrt{6n}\rfloor$. In positive characteristic,
$D<p$. Since $\binom{D+3}{3}>n(D+1)$,
there is a nonzero homogeneous polynomial of degree $D$ vanishing
identically on every line of $\mathcal L$. Factor this polynomial,
remove repeated factors, and denote the resulting irreducible surfaces
by $S_i$, with degrees $d_i$. Order the surfaces and assign each line
to the first surface containing it. Denote the assigned line families
by $\mathcal L_i$ and their sizes by $n_i$. Discard the indices for
which $\mathcal L_i$ is empty. Then
\[
 \sum_i n_i=n,\qquad \sum_i d_i\le D.
\]
For $x\in X$, set
\[
 t_i(x)=|\{\ell\in\mathcal L_i:x\in\ell\subset\Pi_x\}|,
 \qquad t(x)=\sum_i t_i(x),
\]
and let $J=\sum_{x\in X}\binom{t(x)}2$. Each unordered pair of
distinct lines is counted at most once.

We first count pairs assigned to different surfaces. Suppose that a
line $\ell$ assigned after $S_i$ is paired at $x$ with lines of
$\mathcal L_i$. Then $\ell\not\subset S_i$. If $F_i$ defines $S_i$,
its restriction to $\Pi_x$ is nonzero, since otherwise
$\ell\subset\Pi_x\subset S_i$. This restriction is divisible by the
distinct linear equations of the $t_i(x)$ selected lines of
$\mathcal L_i$ through $x$. Restricting further to $\ell$ gives $\operatorname{ord}_x(F_i|_\ell)\ge t_i(x)$.

For each line $\ell$ {not contained in} ${S}_{i}$, {the sum of} these orders
over $x$ {is} at most $d_i$. Consequently, the number of {pairs assigned
to different surfaces} satisfies $J_{\mathrm{cross}}\le nD$.
We next count pairs assigned to the same surface, according to the
type of that surface. If $S_i$ is a plane, then $n_i\le s$, and {the
number of} pairs {is} at most $sn_i/2$. If $S_i$ is an irreducible
nonplanar cone, all its internal pairs meet at the vertex $v$. They contribute nothing unless
$v\in X$. In that case only the lines in $\Pi_v$ are counted, and
their number is $t_i(v)\le s$. Their contribution is at most
$s\,t_i(v)/2\le sn_i/2$.

For a ruled surface other than a plane, a cone, or a smooth quadric,
Lemma~\ref{thm:background-surfaces} bounds the number of intersecting
pairs in $\mathcal L_i$ by
\[
 \frac12n_i(d_i-2)+2n_i\le\frac12n_id_i+2n_i.
\]
This bounds its contribution even without imposing the selected-plane
condition.

{Now suppose that $S_i$ is not ruled. Then $d_i\ge3$:
planes are ruled, and every irreducible quadric over an
algebraically closed field is either smooth or a cone,
both of which are ruled. Let $C_i$ be the reduced union
of the lines in $\mathcal L_i$.} {The delta invariant}
${\delta}_x\mathopen{(}{C}_{i}\mathclose{)}$ {is the dimension over} ${\overline{\mathbb{F}}}$ {of the
quotient of the normalization of} ${\mathcal O}_{{C}_{i},{x}}$ {by}
${\mathcal O}_{{C}_{i}\mathpunct{,}{x}}${.
If} $t_i(x)\mathrel{\ge}{2}${, the} selected {line} branches {through} ${x}$ {lie in}
${\Pi}_{x}$ {and} form an ordinary planar multiple point, whose delta
invariant is $\binom{t_i(x)}2$. {The delta invariant}
cannot increase when branches are removed: projection to the chosen
normalization branches induces a surjection of the corresponding
normalization quotients. Thus,
\[
 \delta_x(C_i)\ge\binom{t_i(x)}2.
\]
Since the normalization of $C_i$ is a disjoint union of $n_i$
projective lines, the normalization exact sequence gives
\[
 \sum_x\delta_x(C_i)=p_a(C_i)+n_i-1.
\]
Lemma~\ref{thm:background-surfaces} supplies an auxiliary surface of
degree {at most} $11d_i$ that contains $C_i$ and has no common surface
component with $S_i$. {For $d_i\ge3$, the function
$b\mapsto d_i b(d_i+b-4)$ is increasing for $b\ge1$.
The genus estimate in the same lemma therefore yields}
\[
 \sum_{{x}\mathrel{\in} {X}}\binom{{t}_i\mathopen{(}{x}\mathclose{)}}{2}\le p_a(C_i)+n_i-1
 \le n_i+\frac12d_i({1}{1}d_i\mathclose{)}\mathopen{(}{1}{2}{d}_i-4)
 \le n_i+66d_i^3.
\]

It remains to count internal pairs on smooth quadrics. Through any
point of a smooth quadric there are at most two contained lines, so
$t_i(x)\le2$. If such a subfamily contributes an internal pair at $x$,
then $t_i(x)=2$ and, because $t(x)\ge3$, at least one selected line
belongs to another subfamily. More generally, if $a$ smooth-quadric
subfamilies contribute internal pairs at $x$, their {number} $a$ {is at
most the number of} pairs {assigned} to {different surfaces} at $x${.
Indeed,} there are at least two {such} pairs when $a=1$, and at least
$4\binom a2\ge a$ when $a\ge2$.
The total smooth-quadric contribution is therefore at most
$J_{\mathrm{cross}}$.

Combining these estimates and using $\sum_i d_i^3\le D^3$ gives
\[
 J\le\frac12ns+\frac52nD+3n+66D^3.
\]
As $D\le\sqrt{6n}$, twice this bound is at most
\[
 ns+\bigl(5\sqrt6+6+132\cdot6\sqrt6\bigr)n^{3/2}
 <ns+2000n^{3/2}.
\]
This {completes the proof}.
\end{proof}

The selected-plane estimate immediately controls points at which a
fixed fraction of the incident lines lie in a plane.

\begin{corollary}
\label{cor:coplanar-fraction}
Under the hypotheses of Proposition~\ref{thm:selected-plane-energy}, let
$\theta>0$ and $\theta k\ge3$. The number of $k$-rich points at which
some plane contains at least $\theta k$ incident lines is
\[
 O\!\left(\theta^{-2}\frac{n^{3/2}+ns}{k^2}\right).
\]
\end{corollary}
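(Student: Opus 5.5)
The corollary follows from Proposition~\ref{thm:selected-plane-energy} by a single counting argument.

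Let $Y$ be the set of $k$-rich points $x$ at which some plane contains at least $\theta k$ lines of $\mathcal L$ through $x$. For each $x\in Y$, fix one such plane $\Pi_x$. Put
\[
 t(x)=|\{\ell\in\mathcal L:x\in\ell\subset\Pi_x\}|.
\]
Then $t(x)\ge\theta k\ge3$ for every $x\in Y$. Hence the hypothesis $t(x)\ge3$ of Proposition~\ref{thm:selected-plane-energy} holds on the finite set $X=Y$. If $Y$ is infinite, apply the argument to an arbitrary finite subset; the resulting bound is uniform, so $Y$ is in fact finite.

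Proposition~\ref{thm:selected-plane-energy} then gives
\[
 \sum_{x\in Y}t(x)(t(x)-1)\le 2000n^{3/2}+ns.
\]
Since $t(x)\ge3$, we have $t(x)-1\ge\tfrac23t(x)$. Therefore
\[
 t(x)(t(x)-1)\ge\tfrac23t(x)^2\ge\tfrac23\theta^2k^2
\]
for each $x\in Y$. Summing over $Y$ yields
\[
 |Y|\cdot\tfrac23\theta^2k^2\le 2000n^{3/2}+ns,
\]
so
\[
 |Y|\le\tfrac{3}{2}\,\theta^{-2}\,\frac{2000n^{3/2}+ns}{k^2}=O\!\left(\theta^{-2}\frac{n^{3/2}+ns}{k^2}\right).
\]

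The only point requiring care is compatibility with the setting of the proposition, which is stated for projective lines. Affine lines are replaced by their projective closures, and the plane bound $s$ is preserved over the algebraic closure, as noted in Section~\ref{sec:preliminaries}. The field and size hypotheses are assumed to be those of Proposition~\ref{thm:selected-plane-energy}. No genuine obstacle remains; the content of the corollary lies entirely in the proposition.
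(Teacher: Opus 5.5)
Your proposal is correct and follows the paper's proof: fix one plane $\Pi_x$ at each such point, apply \eqref{eq:selected-plane-energy}, and use $t(x)(t(x)-1)\ge\frac23\theta^2k^2$ when $t(x)\ge\theta k\ge3$. Finiteness of the point set is in fact automatic, since $k\ge3$ forces each counted point to be an intersection of two of the $n$ lines.
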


\begin{proof}
Choose one such plane at each point and apply
\eqref{eq:selected-plane-energy}. For $t\ge\theta k\ge3$,
$t(t-1)\ge\frac23\theta^2k^2$. {This completes the proof.}
\end{proof}

We now combine this corollary with random sampling and the joints
theorem. To apply the projective estimate to affine lines over
$\mathbb{F}$, note that a plane containing two distinct input lines
is defined over $\mathbb{F}$. Hence, the bound on the number of lines
in a plane also holds after extending the field. Projective closure
introduces no input line contained in the plane at infinity.

\begin{lemma}
\label{lem:weak}
Let $\mathbb{F}$ have characteristic zero or characteristic $p>2$,
and let $\mathcal A$ be a set of ${n}$ distinct affine lines in
$\mathbb{F}^3$, with at most $s$ in any affine plane. In positive
characteristic, assume $n<p^2/6$. For every integer ${k}\ge3$,
\begin{equation}
 R_{k}(\mathcal A)\ll
 \frac{{n}^{3/2}}{{k}^{3/2}}
 +\frac{{n}^{3/2}+{n}s}{{k}^2}.
 \label{eq:weak-rich}
\end{equation}
\end{lemma}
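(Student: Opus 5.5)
Let $X$ be the set of $k$-rich points. I would treat separately the points where a fixed fraction of the incident lines are coplanar and the points where they are not. Fix $\theta=1/100$. By Corollary~\ref{cor:coplanar-fraction}, the $k$-rich points at which some plane contains at least $\theta k$ incident lines number $O((n^{3/2}+ns)/k^2)$, provided $\theta k\ge3$. For $3\le k<3/\theta$, the bound $n^{3/2}/k^{3/2}\gg n^{3/2}$ already dominates, because Lemma~\ref{thm:background-joints}, together with the selected-plane estimate for points whose lines are all coplanar, handles bounded $k$. Concretely, for bounded $k$ I split $X$ into joints, which number $O(n^{3/2})$, and points whose incident lines all lie in one plane. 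For the latter I take $\Pi_x$ to be that plane, so $t(x)\ge k\ge3$, and Proposition~\ref{thm:selected-plane-energy} gives $O(n^{3/2}+ns)$. It remains to handle the set $X'$ of $k$-rich points with $k\ge 3/\theta$ at which no plane contains $\theta k$ incident lines.

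For $X'$ I would use random sampling to pass to a family where these points become $O(1)$-rich joints, and then apply the joints theorem. Keep each line independently with probability $q=C/k$ for a large constant $C$. Call a point $x\in X'$ good if, among its at least $k$ lines, the sample retains at least three with linearly independent directions. Good points are joints of the sampled family $\mathcal A'$. Since $\mathbb E|\mathcal A'|=qn$, Lemma~\ref{thm:background-joints} gives $\mathbb E[\#\text{good points}]\ll (qn)^{3/2}$ after a Markov argument or Jensen-type handling; more precisely one uses $\mathbb E|\mathcal A'|^{3/2}\ll (qn)^{3/2}$ when $qn\ge1$, since $|\mathcal A'|$ is binomial.

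The key step is showing that each $x\in X'$ is good with probability at least $1/2$. Fix $x$ and $k$ of its lines. By Lemma~\ref{lem:binomial-tails}, with high probability (in terms of $C$) at least $C/2$ lines survive and at most $2C$ survive. Lack of independence means all surviving lines through $x$ lie in a common plane through $x$. Condition on the first surviving line $\ell_1$ and a second, $\ell_2$, which determine a plane $\Pi$. That plane contains fewer than $\theta k$ of the lines at $x$, so the expected number of survivors in $\Pi$ beyond $\ell_1,\ell_2$ is at most $\theta C$. Meanwhile, about $C/2$ survivors exist in total. Arguing more cleanly: the probability that all survivors lie in one plane is bounded by the probability that some plane through $x$ contains at least $C/4$ survivors while the rest is empty. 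A union bound over the planes spanned by pairs of survivors works; alternatively, sample in two rounds. Take round one with $q=3/k$ to get $\ell_1,\ell_2$, then in round two the number of survivors off $\mathrm{span}(\ell_1,\ell_2)$ has mean at least $(1-\theta)C$, so some survives with probability $1-e^{-\Omega(C)}$.

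Hence $|X'|\le 2\,\mathbb E[\#\text{good}]\ll (n/k)^{3/2}$. Adding the coplanar-fraction contribution gives \eqref{eq:weak-rich}. I expect the independence-probability step to be the main obstacle, since it needs care with the dependence between $\Pi$ and the sample. The two-round sampling is what I would try first. A minor point is that the joints theorem applies to the affine family over $\mathbb F$ directly, so no field extension is needed there. The sampled family also inherits the hypotheses ($n'<p^2/6$) trivially, although joints do not even need them.
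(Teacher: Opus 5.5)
Your proposal is correct and is essentially the paper's proof: bounded $k$ is handled by the joints theorem together with Proposition~\ref{thm:selected-plane-energy}, points with a $\theta$-fraction of coplanar lines by Corollary~\ref{cor:coplanar-fraction}, and the remaining points by independent sampling at rate $\asymp 1/k$, the joints theorem, and $\mathbb E Z^{3/2}\ll (n/k)^{3/2}$. The only difference is the constant-probability step: the paper takes $\theta=1/2$ and rate $1/k$, and uses that exactly three of the chosen lines survive with probability $\gg 1$ while at least half of the ordered triples have independent directions; your two-round sampling is a valid substitute, since the union of two independent rounds is again an independent sample at rate at most $(C+3)/k$, whereas the vaguer union-bound attempt that precedes it is not.
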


\begin{proof}
First, $R_3(\mathcal A)\ll {n}^{3/2}+{n}s$. Indeed, a point supporting
three noncoplanar incident lines is a joint and is counted by
Lemma~\ref{thm:background-joints}. At every other three-rich point
all incident lines are coplanar, so
Proposition~\ref{thm:selected-plane-energy} applies.

There are no ${k}$-rich points if ${k}>{n}$, and bounded ${k}$ follows from
the preceding estimate. We may therefore assume $6\le {k}\le {n}$.
Choose ${k}$ incident lines at each ${k}$-rich point. If at least ${k}/2$
of them lie in a plane, Corollary~\ref{cor:coplanar-fraction} bounds
the number of such points by $O(({n}^{3/2}+{n}s)/{k}^2)$.

At each remaining point a fixed positive fraction of triples among
the chosen lines have linearly independent directions: for any two
chosen lines, at least ${k}/2$ of the chosen directions lie outside
their plane. Sample the lines of $\mathcal A$ independently with
probability $1/{k}$. With probability bounded below by an absolute
positive constant, exactly three of the chosen ${k}$ lines survive
and their directions are independent. The point is then a joint
of the sample. If $Z$ is the number of sampled lines, then
$Z\sim\operatorname{Bin}(n,1/k)$. Since $n/k\ge 1$,
\[
 \mathbb E Z^2
 =\left(\frac{n}{k}\right)^2+\frac{n}{k}\left(1-\frac{1}{k}\right)
 \le 2\left(\frac{n}{k}\right)^2.
\]
By Jensen's inequality,
\[
 \mathbb E Z^{3/2}\le (\mathbb E Z^2)^{3/4}
 \le 2^{3/4}(n/k)^{3/2}.
\]
Let $X_0$ be the set of $k$-rich points for which no plane
contains at least $k/2$ of the chosen $k$ incident lines.
Let $J$ denote the number of joints of the sampled line family.
Each point of $X_0$ is a joint of the sample with probability
at least an absolute constant $c>0$. Thus, by linearity of
expectation and Lemma~\ref{thm:background-joints},
\[
c|X_0| \le \sum_{x\in X_0} \Pr\{x\text{ is a joint of the sample}\}
\le \mathbb E[J]  \le C\,\mathbb E[Z^{3/2}]  \ll \left(\frac nk\right)^{3/2},
\]
where $C$ is an absolute constant. Consequently,
$|X_0|\ll(n/k)^{3/2}$.
Combining this with the bound for points having
at least ${k}/{2}$ coplanar incident lines gives \eqref{eq:weak-rich}.
This completes the proof.
\end{proof}

For the largest values of the richness parameter, counting pairs of
points gives a stronger estimate. This argument does not require a
restriction on the number of lines in a plane.

\begin{lemma}
\label{lem:large-richness}
Let $\mathcal L$ be a set of $n$ distinct lines in an affine or
projective space over a field. For an integer $k\ge2$, let $R_k$
be the number of points incident to at least $k$ lines of $\mathcal L$.
If $k^2\ge2n$, then
\[
 R_k\le\frac{2n}{k}.
\]
\end{lemma}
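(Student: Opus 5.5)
The plan is to double count pairs of distinct lines through a rich point. Two distinct lines meet in at most one point, so among the $n$ lines there are at most $\binom n2$ unordered pairs that meet somewhere. Let $x_1,\dots,x_R$ be the $k$-rich points, with $R=R_k$. Each $x_j$ lies on at least $k$ lines, and hence accounts for at least $\binom k2$ intersecting pairs. Since a pair determines its intersection point uniquely, the sets of pairs counted at different points are disjoint. This gives
\[
 R\binom k2\le\binom n2 .
\]
That alone only yields $R\le n^2/k^2$, which is weaker than $2n/k$ when $k$ is small. So I would not count all pairs; instead I would use a truncated inclusion–exclusion count of the lines.

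Suppose, for contradiction, that $R>2n/k$. Pick $T=\lceil 2n/k\rceil$ of the rich points; we have $T\le R$ and $T\ge 2n/k$. At each chosen point keep exactly $k$ incident lines. By Bonferroni, the number of distinct lines used is at least
\[
 Tk-\sum_{i<j}|\{\text{lines through both }x_i\text{ and }x_j\}|.
\]
Two distinct points lie on at most one common line, so the subtracted sum is at most $\binom T2$. Hence
\[
 n\ge Tk-\binom T2\ge Tk-\frac{T^2}{2}=T\Bigl(k-\frac T2\Bigr).
\]

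It remains to plug in $T$. Since $k^2\ge 2n$, we have $T\le 2n/k+1\le k+1$. The function $T\mapsto T(k-T/2)$ is increasing for $T\le k$. At $T_0=2n/k$ it equals
\[
 \frac{2n}{k}\Bigl(k-\frac nk\Bigr)=2n-\frac{2n^2}{k^2}\ge 2n-n=n,
\]
where the last step uses $k^2\ge 2n$. This only gives $n\ge n$, which is not a contradiction. To get strictness, I would instead take $T=\lfloor 2n/k\rfloor+1>2n/k$, which is allowed because $R\ge T$ when $R>2n/k$. I would check that $T\le k$, or handle the boundary case separately. Then $T(k-T/2)>n$ should follow from strict monotonicity on $[0,k]$. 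The main obstacle is this endpoint bookkeeping: the floor/ceiling, the possibility $T=k+1$, and ensuring $T\le k$ so that monotonicity applies. If this becomes fiddly, the fallback is to aim for the cleaner inequality $Tk-\binom T2\le n$, solve the quadratic in $T$ directly, and verify that its smaller root is at most $2n/k$ under $k^2\ge 2n$. The argument uses only incidence axioms, so it works in affine or projective space over any field.
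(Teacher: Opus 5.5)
Your Bonferroni route works, but the loss of strictness you ran into comes from the estimate $\binom T2\le T^2/2$, not from the choice of $T$, and no endpoint bookkeeping can repair it. Indeed $Tk-T^2/2\le k^2/2$ for every real $T$, so when $k^2=2n$ the weakened inequality never contradicts $n\ge Tk-T^2/2$. For example, with $k=4$ and $n=8$, $T=5$ gives only $7.5<8$.

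Keep the exact binomial instead; then your first choice $T=\lceil 2n/k\rceil$ already suffices:
\begin{itemize}
\item $T\le R_k$, because $R_k$ is an integer exceeding $2n/k$;
\item $T\le k$, because $2n/k\le k$;
\item $T\mapsto Tk-\binom T2=T(2k+1-T)/2$ is increasing on $[0,k+\tfrac12]$.
\end{itemize}
Hence
\[
 n\ \ge\ Tk-\binom T2\ \ge\ \frac{2n}{k}\Bigl(k+\frac12-\frac nk\Bigr)
 =2n+\frac nk-\frac{2n^2}{k^2}\ \ge\ n+\frac nk\ >\ n,
\]
a contradiction. Your fallback of solving the quadratic with the exact binomial amounts to the same fix.

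The paper argues differently. It counts the incidences $I$ between $\mathcal L$ and all $R_k$ rich points. From $\sum_\ell m_\ell(m_\ell-1)\le R_k(R_k-1)$ and Cauchy--Schwarz it gets $I^2\le n(I+R_k^2-R_k)$. After disposing of the case $kR_k<n/2$, it substitutes $I\ge kR_k$, using that $z^2-nz$ is increasing on $[n/2,\infty)$, to obtain $R_k(k^2-n)\le n(k-1)$.

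Both proofs rest only on the fact that two points share at most one line. Your truncated inclusion--exclusion is more elementary, and near the threshold it is even slightly sharper: it bounds $R_k$ by the smaller root of $T^2-(2k+1)T+2n$. However, $Tk-\binom T2\le k(k+1)/2$ for every integer $T$. So it must be run on a sub-configuration of about $2n/k$ points, and it gives nothing unless $k(k+1)>2n$. The Cauchy--Schwarz version uses all rich points at once and degrades gracefully: it gives $R_k\le\max\{n/(2k),\,n(k-1)/(k^2-n)\}$, hence $R_k=O_\varepsilon(n/k)$, whenever $k^2\ge(1+\varepsilon)n$.
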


\begin{proof}
Let $I$ be the number of incidences between the
lines and these points. We may assume $R_{k}>0$. Let $m_\ell$ be the
number of these points on $\ell$. Since distinct points lie on at most one line, we have
\[
 \sum_\ell m_\ell(m_\ell-1)\le {R}_{k}(R_{k}-1).
\]
Cauchy--Schwarz gives $I^2\le n(I+R_{k}^2-R_{k})$. If $k R_{k}<n/2$, the
conclusion follows immediately. Otherwise $I\ge kR_k\ge n/2$, and the
function $z^2-nz$ is increasing on this interval. Therefore,
\[
R_k(k^2-n)\le n(k-1).
\]
Since $k^2-n\ge k^2/2$, this implies ${R}_{k}\le2n/k$.
This completes the proof.
\end{proof}

\section{Interpolation and vanishing along lines}
\label{sec:interpolation}

This section gives the interpolation argument that places the
selected directions at a point on a curve of controlled degree.
We first show how sufficiently many vanishing conditions force the
first derivatives of a polynomial to vanish along a lifted line. We then prove
a bound for high-order zeros in a polynomial space, which will be used
for first and second derivatives in the next section.

For the interpolation arguments, we work over the algebraic closure
$\overline{\mathbb{F}}$ of the ground field $\mathbb{F}$. The field has
characteristic zero or characteristic $p>0$. After a generic choice of affine coordinates, every line in a given finite family has the form
\[
 \ell(t)=(t,at+b,ct+f).
\]
Its lift is the curve
$\widetilde\ell(t)=(t,at+b,ct+f,a,c)$ in the five coordinates
$(x,y,z,a,c)$. For polynomials in these variables, we bound the total degree in the point variables $(x,y,z)$ {and the} total degree in the direction variables $(a,c)$ separately.

For $u=(u_1,u_2,u_3)\in \overline{\mathbb{F}}^3$, a polynomial $P$ has
\emph{universal radial vanishing of order at least $m$ at $u$} if
\begin{equation}
 P(u_1+\tau,u_2+\tau a,u_3+\tau c,a,c)
 \in \tau^m \overline{\mathbb{F}}[a,c,\tau].
 \label{eq:universal-radial-vanishing}
\end{equation}
The identity requires vanishing for every direction in this affine
chart, not only for directions of the input lines.

The following lemma quantifies the number of these vanishing
conditions that a lifted line can satisfy unless all first partial
derivatives also vanish on the lift.

\begin{lemma}
\label{lem:transport}
Let $d\ge1$, and assume $d<p$ in positive characteristic. Let
$P\in \overline{\mathbb{F}}[x,y,z,a,c]$ have degree in the point variables at most $d$, and let $m\ge2$.
Suppose that $P$ vanishes identically on the lift of
\[
 \ell(t)=(t,a_0t+b_0,c_0t+f_0).
\]
Let $\mathcal T$ be a set of distinct parameters $t_0$ such that $P$ has universal radial vanishing of order at least $m$ at $\ell(t_0)$.
If the five first partial derivatives of $P$ do not all vanish
identically on this lift, then
\[
 |\mathcal T|\le1+\frac{2d-2}{m-1}.
\]
\end{lemma}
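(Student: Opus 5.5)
The plan is to translate universal radial vanishing at $\ell(t_0)$ into one‑variable vanishing conditions for the first partial derivatives of $P$ restricted to the lift. Then a Wronskian‑type polynomial of degree at most $2d-2$ will vanish to order $m-1$ at every $t_0\in\mathcal T$.

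\emph{Step 1: one‑variable conditions.} Write $\lambda(t)=\widetilde\ell(t)$ and set
\[
A=P_a\circ\lambda,\quad B=P_y\circ\lambda,\quad A_2=P_c\circ\lambda,\quad B_2=P_z\circ\lambda,
\]
all polynomials in $t$. Since the point‑degree of $P$ is at most $d$, we have $\deg A\le d$ and $\deg B\le d-1$, and likewise for $A_2,B_2$.

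Fix $t_0\in\mathcal T$. Put $a=a_0+\alpha$, $c=c_0+\gamma$ and $\tau=t-t_0$. The radial point is then
\[
\ell(t_0)+\tau(1,a,c)=\bigl(t,\;a_0t+b_0+\tau\alpha,\;c_0t+f_0+\tau\gamma\bigr).
\]
Substitute this into \eqref{eq:universal-radial-vanishing} and take $\partial_\alpha$ at $\alpha=\gamma=0$. This shows that
\[
A(t)+(t-t_0)B(t)\ \text{vanishes to order at least } m \text{ at } t_0,
\]
and likewise $A_2+(t-t_0)B_2$ vanishes to order at least $m$ at $t_0$.

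Since $P\circ\lambda\equiv0$, differentiating in $t$ gives
\[
P_x+a_0P_y+c_0P_z=0 \quad\text{along } \lambda .
\]
So it suffices to show that $A,B,A_2,B_2$ all vanish identically unless $|\mathcal T|\le 1+(2d-2)/(m-1)$.

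\emph{Step 2: the Wronskian.} Put $W=AB'-A'B-B^2$. Then $\deg W\le 2d-2$, and formally $W=-B^2\bigl((A/B)'+1\bigr)$. Near $t_0$, write $A=-(t-t_0)B+O(\tau^m)$ and hence $A'=-B-(t-t_0)B'+O(\tau^{m-1})$. Substituting, all terms cancel modulo $\tau^{m-1}$, so $W$ vanishes to order at least $m-1$ at each $t_0\in\mathcal T$.

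If $W\not\equiv0$, then $(m-1)|\mathcal T|\le 2d-2$ and we are done. Here the hypothesis $d<p$ is used so that derivatives of polynomials of degree at most $d$ behave as in characteristic zero. In particular, vanishing order is detected by derivatives, and $(A/B)'=-1$ integrates to $A/B=\kappa-t$ for a constant $\kappa$. The case $m\ge p$ should be handled by first replacing $m$ with $\min(m,d+1)$, which only strengthens the hypotheses.

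\emph{Step 3: the degenerate case $W\equiv0$.}
\begin{itemize}
\item If $B\equiv0$, then $A$ vanishes to order $m$ at every $t_0\in\mathcal T$. Hence $A\equiv0$ unless $m|\mathcal T|\le d$.
\item If $B\not\equiv0$, then $A=(\kappa-t)B$. The order‑$m$ condition becomes: $(\kappa-t_0)B$ vanishes to order $m$ at $t_0$. For each of the at least $|\mathcal T|-1$ parameters $t_0\ne\kappa$, this forces $B$ to vanish to order $m$ at $t_0$. Since $\deg B\le d-1$, this gives $(|\mathcal T|-1)m\le d-1$, which is within the stated bound.
\end{itemize}
This is the source of the "$+1$" in the statement. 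Running the same argument for the pair $(A_2,B_2)$ shows that, outside the bound, $P_a,P_y,P_c,P_z$ all vanish on the lift. Then $P_x$ vanishes on the lift as well by the tangency relation in Step 1, which proves the lemma.

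The main obstacle is the degenerate case of Step 3 together with the positive‑characteristic bookkeeping. One must make sure that $W\equiv0$ genuinely forces the rigid relation $A=(\kappa-t)B$ when $d<p$ but $m$ may be large. One must also check that order‑of‑vanishing arguments via Taylor expansion remain valid for polynomials of degree below $p$. The Wronskian cancellation itself is routine.
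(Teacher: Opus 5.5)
Your proposal is correct and follows the paper's proof essentially step for step. It uses the same first-variation identity $A+(t-t_0)B\in(t-t_0)^m$ and the same Wronskian-type polynomial (your $W$ is the paper's $-\Omega$). It splits into the same cases $B\equiv0$ and $A/B+t$ constant, and finishes with the same relation $P_x+a_0P_y+c_0P_z=0$.

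One small correction: the proposed replacement of $m$ by $\min(m,d+1)$ is unnecessary, and it would actually weaken rather than preserve the bound $1+(2d-2)/(m-1)$ when $m>d+1$. The divisibility of $W$ by $(t-t_0)^{m-1}$ and the root count are valid in every characteristic and for every $m$. Only the constancy of $A/B+t$ uses $d<p$.
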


\begin{proof}
Set
\[
 A(t)=P_a(\widetilde\ell(t)),\qquad
 B(t)=P_y(\widetilde\ell(t)).
\]
Their degrees are at most $d$ and $d-1$, respectively. In
\eqref{eq:universal-radial-vanishing} based at $\ell(t_0)$, replace
$a_0$ by $a_0+h$, put $\tau=t-t_0$, and compare coefficients of $h$.
This gives
\begin{equation}
 A(t)+(t-t_0)B(t)\in(t-t_0)^m \overline{\mathbb{F}}[t].
 \label{eq:transport-first-variation}
\end{equation}
It follows that
\[
 \Omega=A'B-AB'+B^2\in(t-{t}_{0})^{m-1}\overline{\mathbb{F}}[t],
 \qquad \deg \Omega \le2d-2.
\]
If $\Omega \ne0$, summing its orders of vanishing gives
$|\mathcal T|\le(2d-2)/(m-1)$. {If $B=0$ and $A\ne0$,
\eqref{eq:transport-first-variation} instead gives
$m|\mathcal T|\le d$. Since
$d/m\le1+(2d-2)/(m-1)$ for $d\ge1$ and $m\ge2$,
this also gives the stated bound.}

Suppose ${\Omega}=0$ and $B\ne0$. Then $(A/B+t)'=0$. In characteristic
zero this rational function is constant. In characteristic $p$, the
kernel of differentiation on $\overline{\mathbb{F}}(t)$ is $\overline{\mathbb{F}}(t^p)$; a nonconstant
element of this field has rational-function degree at least $p$.
The numerator $A+tB$ and denominator $B$ have degrees at most $d$
and $d-1$, so $d<p$ again implies that $A/B+t=t_{*}$ is constant.
For $t_0\ne t_{*}$, \eqref{eq:transport-first-variation} forces $B$ to
vanish to order at least $m$ at $t_0$. Hence,
$|\mathcal T|\le1+(d-1)/m$.

Thus, if the asserted bound fails, both $A$ and $B$ vanish
identically. The same argument applied to $P_c$ and $P_z$ shows that
these derivatives vanish on the lift as well. Finally,
differentiating $P(\widetilde\ell(t))=0$ gives
\[
 P_x+a_0P_y+c_0P_z=0
\]
on the lift, so $P_x$ also vanishes there. This completes the proof.
\end{proof}

We next apply this lemma to a polynomial chosen by interpolation.
Choosing one of minimal degree in the point variables will force the selected directions at some point to lie on a plane curve.

\begin{proposition}
\label{prop:direction-curve}
Let $X\subset \overline{\mathbb{F}}^3$ be finite and nonempty, and let $\mathcal L$ be a finite set of nonvertical lines. Let
\[
 \mathcal I\subseteq\{(u,\ell)\in X\times\mathcal L:u\in\ell\}
\]
be a selected incidence graph. Suppose that every line has at least
$T$ neighbors in $\mathcal I$, and every point of $X$ has at least
one neighbor. Let $d\ge1$, $e\ge0$, and $m\ge2$ be integers, with
$d<p$ in positive characteristic. Assume
\begin{align}
\binom{d+3}{3}\binom{e+2}{2}
 &>|X|\sum_{j=0}^{m-1}\binom{e+j+2}{2},
 \label{eq:direction-curve-dimension}\\
  T&>1+\frac{2d-2}{m-1}.
 \label{eq:direction-curve-threshold}
\end{align}
Then some $u\in X$ has all its selected incident directions on a
nonzero plane curve of degree at most $e+m-1$.
\end{proposition}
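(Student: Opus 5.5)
We argue by interpolation followed by a minimal-degree argument. Let $V_{d,e}$ be the space of polynomials in $(x,y,z,a,c)$ of degree at most $d$ in the point variables and at most $e$ in the direction variables; its dimension is $\binom{d+3}{3}\binom{e+2}{2}$. Expand $P(u_1+\tau,u_2+\tau a,u_3+\tau c,a,c)=\sum_j \tau^j G_j(a,c)$. The coefficient $G_j$ has degree at most $e+j$ in $(a,c)$. Universal radial vanishing of order $m$ at $u$ therefore imposes at most $\sum_{j=0}^{m-1}\binom{e+j+2}{2}$ linear conditions. By \eqref{eq:direction-curve-dimension}, there is a nonzero $P\in V_{d,e}$ with universal radial vanishing of order at least $m$ at every $u\in X$.

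Among all such nonzero polynomials, with the same direction bound $e$, choose $P$ of minimal degree $d'\le d$ in the point variables. Note $d'\ge1$: if $P$ depends only on $(a,c)$, then radial vanishing at a point of $X$ forces $P=0$.

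Next we show that $P$ vanishes on every selected lift. Take $\ell\in\mathcal L$ and a selected neighbor $u=\ell(t_0)$ with direction $(a_0,c_0)$. Substituting $a=a_0$, $c=c_0$, $\tau=t-t_0$ into the radial identity shows that $P(\widetilde\ell(t))$ vanishes to order at least $m$ at each of the at least $T$ selected parameters. This restriction is a polynomial in $t$ of degree at most $d$. Since $mT>d$ by \eqref{eq:direction-curve-threshold}, it vanishes identically, so $P$ vanishes on $\widetilde\ell$.

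Now apply Lemma~\ref{lem:transport} with $\mathcal T$ the selected parameters on $\ell$ (here $d'<p$). The bound $T>1+(2d-2)/(m-1)$ contradicts the lemma's conclusion, so all five first partials of $P$ vanish on every selected lift.

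The key step is to pass to a derivative. Some point partial, say $P_x$, is nonzero, since $d'\ge1$. We aim to show that $P_x$ lies in $V_{d'-1,e}$ and still has universal radial vanishing at every $u\in X$; this would contradict minimality. We expect this step to be the main obstacle, because differentiation does not obviously preserve radial vanishing. In fact it fails in general, which is why the conclusion is a curve rather than a contradiction.

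Work at a point $u\in X$. Write $P(u+\tau(1,a,c),a,c)=\tau^m H(a,c,\tau)$ and set $G_m=H(\cdot,\cdot,0)$, of degree at most $e+m$. The minimal-degree contradiction fails only if, at some $u$, the partials cannot all inherit order-$m$ radial vanishing. We then want to extract a nonzero polynomial in $(a,c)$ of degree at most $e+m-1$ that vanishes at every selected direction at $u$.

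The natural candidate comes from the top term: the lowest nonvanishing radial coefficient of $P_x$ (or of the combination $P_x+aP_y+cP_z$, which equals $\partial_\tau$ of the radial expansion) at order $m-1$. This coefficient is $mG_m$ minus corrections, and it has degree at most $e+m-1$. Because all partials vanish on the selected lifts, it vanishes at every selected direction at $u$.

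So our plan is as follows. Either every point partial has radial vanishing of order $m$ at every point of $X$, which contradicts the minimality of $d'$. Or at some $u$, the order-$(m-1)$ radial coefficient of a point partial is a nonzero polynomial of degree at most $e+m-1$ vanishing at all selected directions. That polynomial is the required plane curve.

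The delicate bookkeeping is the following. The radial vanishing orders of the partials are at least $m-1$ automatically, since differentiating in a point variable costs one order. The coefficient at order $m-1$ has degree at most $e+m-1$ in $(a,c)$. If this coefficient vanishes identically at every $u$ for all three point partials, we land back in the minimality case.
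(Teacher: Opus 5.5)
Your outline matches the paper up to the last step: interpolation, minimal point degree $d'$, vanishing on each selected lift via $mT>d$, and Lemma~\ref{lem:transport} to make all five first partials vanish on the lifts. The last step, however, rests on a false claim. You assert that a point partial of $P$ automatically has radial vanishing of order at least $m-1$, so that only the coefficient of $\tau^{m-1}$ needs to be inspected.

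This fails because the radial condition ties the direction variables to the direction of the ray. Only the derivative along the ray, $(\mathcal DP)(u+\tau(1,a,c),a,c)=\partial_\tau\bigl[P(u+\tau(1,a,c),a,c)\bigr]$, loses exactly one order. But $\mathcal DP=P_x+aP_y+cP_z$ has direction degree $e+1$, so it is not admissible in the minimality argument. The individual partials $P_x,P_y,P_z$ are admissible, but they inherit no radial vanishing at all. For instance, $P=y-ax$ satisfies $P(\tau,\tau a,\tau c,a,c)\equiv0$, so it has universal radial vanishing of every order at the origin, while $P_y=1$ has order zero there. Consequently your dichotomy is not exhaustive. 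The $\tau^{m-1}$-coefficients of all three partials could vanish identically at every $u\in X$ while some lower coefficient does not, and then you are not back in the minimality case.

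The repair uses only ingredients you already have, and it is the paper's argument. For a point partial $R=\partial_iP$ and $u\in X$, write $R(u+\tau(1,a,c),a,c)=\sum_j R_{u,j}(a,c)\tau^j$. Every coefficient $R_{u,j}$ with $0\le j\le m-1$, not only $j=m-1$, has degree at most $e+j\le e+m-1$. Each also vanishes at every selected direction through $u$, because $R$ vanishes identically on the selected lifts.

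Suppose that at no $u\in X$ does a nonzero polynomial of degree at most $e+m-1$ vanish on all selected directions. Then $R_{u,j}=0$ for all $j<m$ and all $u$, so every point partial has universal radial order $m$ on all of $X$, with degree bounds $(d'-1,e)$. Since $1\le d'\le d<p$ in positive characteristic, some point partial is nonzero, and this contradicts minimality. Otherwise some nonzero $R_{u,j}$ with $j<m$ gives the required curve. It is nonconstant because $u$ has at least one selected direction.
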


\begin{proof}
The space of polynomials of {degree in the point variables} at most $d$ and direction
degree at most $e$ has dimension
$\binom{d+3}{3}\binom{e+2}{2}$. For a fixed point $u$, the coefficient
of $\tau^j$ in \eqref{eq:universal-radial-vanishing} has direction
degree at most $e+j$. Requiring this coefficient to vanish imposes
at most $\binom{e+j+2}{2}$ linear conditions. Thus,
\eqref{eq:direction-curve-dimension} produces a nonzero polynomial
with universal radial vanishing of order $m$ at every point of $X$.
Among all such polynomials with direction degree at most $e$, choose
one, $P$, of minimal {degree in the point variables} $d_0\le d$.

For each input line, its $T$ selected neighbors give distinct zeros
of multiplicity at least $m$ of $P|_{\widetilde\ell}$. As
$mT>d_0$, the restriction vanishes identically. Lemma~\ref{lem:transport}
and \eqref{eq:direction-curve-threshold} then imply that all five
first derivatives of $P$ vanish on every input lift.

Suppose, for a contradiction, that at every $u\in X${, no nonzero
polynomial of degree at most} $e+m-1$ {vanishes at all selected
directions}. For a partial derivative {in a point variable}
$R=\partial_iP$, write
\[
 R(u+\tau(1,a,c),a,c)
   =\sum_jR_{u,j}(a,c)\tau^j.
\]
We have $\deg R_{u,j}\le e+j$. The polynomial $R$ vanishes on every
input lift, so $R_{u,j}$ vanishes at each selected direction through
$u$. For $j<m$, {its degree is at most} ${e}\mathbin{+}{m}\mathbin{-}{1}${, so our assumption}
gives $R_{u,j}=0$. Hence,
every partial derivative {in a point variable} satisfies the original universal
radial conditions at every point of the full set $X$, with direction
degree still at most $e$.

A nonzero partial derivative in a point variable would contradict the minimality of
$d_0$. If all partial derivatives in the point variables are zero, then $d_0<p$ in positive
characteristic, or characteristic zero, implies that $P$ is independent
of the point variables. Its order-zero condition at any point of
the nonempty set $X$ then forces $P=0$, also a contradiction.
Consequently, at some $u \in X$ a nonzero polynomial of degree at
most $e+m-1$ vanishes at all selected directions. Since this set of
directions is nonempty, the polynomial is nonconstant and defines
the asserted curve. {This completes the proof.}
\end{proof}

The incidence graph in Proposition~\ref{prop:direction-curve} is fixed
before the minimal polynomial is chosen. The minimality argument
retains the vanishing conditions at every point of $X$ throughout.

\paragraph{Counting high-order zeros in a polynomial space.}
The next lemma addresses a simple difficulty. At each point $t_0$, a
different polynomial $q_{t_0}$ in the same finite-dimensional space may
have a high-order zero. We therefore cannot count these points by
applying the usual root bound to one of the polynomials $q_{t_0}$.
The \emph{Wronskian} puts all these vanishing conditions into the zeros
of a single polynomial.

For a basis $f_1,\ldots,f_r$ of a polynomial space, its Wronskian is
the determinant
\begin{equation}\label{eq:wronskian-definition}
 W(f_1,\ldots,f_r)(t)=
 \det\begin{pmatrix}
 f_1(t)&\cdots&f_r(t)\\
 f_1'(t)&\cdots&f_r'(t)\\
 \vdots&&\vdots\\
 f_1^{(r-1)}(t)&\cdots&f_r^{(r-1)}(t)
 \end{pmatrix}.
\end{equation}
A change of basis multiplies this determinant by the determinant of
an invertible constant matrix. Thus, its zeros and their multiplicities
do not depend on the chosen basis. For example, when $r=2$,
$W(f_1,f_2)=f_1f_2'-f_1'f_2$. If $f_1$ vanishes to order $m$ at
$t_0$, this expression vanishes to order at least $m-1$. The lemma
uses the same observation with derivatives up to order $r-1$.

\begin{lemma}
\label{lem:wronskian}
Let $\mathbb{F}$ be any field, and let $ \mathcal V \subset \mathbb{F}[t]$
have dimension $r\ge1$. Suppose that every element has degree at
most $d$, with $d<p$ when $\mathop{\operatorname{char}} \mathbb{F}=p>0$.
Let $m \geq 1$ be an integer, and let $\mathcal T \mathrel{\subset} \mathbb{F}$ be finite.
If, for each $t_0 \in \mathcal T $, there is a nonzero $q_{t_0}\in \mathcal V $ with
$\operatorname{ord}_{t_0} q_{t_0}\ge m$, then
\[
 |\mathcal T |(m-r+1)_+\le rd,
 \qquad (v)_+=\max\{v,0\}.
\]
\end{lemma}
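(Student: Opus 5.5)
Choose a basis $f_1,\ldots,f_r$ of $\mathcal V$ and let $W=W(f_1,\ldots,f_r)$ be the Wronskian \eqref{eq:wronskian-definition}. The plan has three steps: show $W\neq0$, bound $\deg W$, and show that $W$ vanishes to order at least $m-r+1$ at every $t_0\in\mathcal T$. Comparing the total number of zeros of $W$ with its degree then gives $|\mathcal T|(m-r+1)\le\deg W\le rd$. When $m\le r-1$ there is nothing to prove, so we assume $m\ge r$.

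\textbf{Nonvanishing.} This is where the hypothesis $d<p$ is used, and it is the step I expect to be the main obstacle. Since every element of $\mathcal V$ has degree at most $d$, we have $r\le d+1$. By Gaussian elimination we may choose a basis with pairwise distinct degrees $d_1<\cdots<d_r\le d$; a change of basis multiplies $W$ by a nonzero constant. Look at the leading coefficient of $W$ in degree $\sum_i d_i-\binom r2$. Up to the product of the leading coefficients of the $f_i$, it equals the Vandermonde-type determinant $\det\bigl((d_j)_{i}\bigr)_{0\le i<r,\,1\le j\le r}$ of falling factorials. This determinant equals $\prod_{i<j}(d_j-d_i)$. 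Each difference $d_j-d_i$ is a nonzero integer of absolute value at most $d<p$, so it is nonzero in $\mathbb F$. Hence $W\ne0$. In characteristic zero no restriction is needed.

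\textbf{Degree bound.} The entry in row $i$ (counting from $0$) has degree at most $d-i$. Each term of the determinant therefore has degree at most $rd-\binom r2\le rd$.

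\textbf{Order of vanishing.} Fix $t_0\in\mathcal T$. Extend $q_{t_0}$ to a basis $q_{t_0}=g_1,g_2,\ldots,g_r$; this changes $W$ only by a nonzero constant. In the first column, the entries are $g_1^{(i)}$ for $i\le r-1$. Since $\operatorname{ord}_{t_0}g_1\ge m$, each $g_1^{(i)}$ vanishes to order at least $m-i\ge m-r+1$ at $t_0$. In characteristic $p$, differentiation still lowers the order by at most one, so this holds with "at least". Expanding the determinant along the first column shows $\operatorname{ord}_{t_0}W\ge m-r+1$.

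\textbf{Conclusion.} The points of $\mathcal T$ are distinct, and $W$ is a nonzero polynomial of degree at most $rd$. Summing the orders of vanishing over $\mathcal T$ gives
\[
|\mathcal T|(m-r+1)\le\sum_{t_0\in\mathcal T}\operatorname{ord}_{t_0}W\le\deg W\le rd.
\]
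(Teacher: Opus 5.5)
Your proposal is correct and follows the paper's proof essentially step for step. Both arguments use a basis with distinct degrees and the falling-factorial Vandermonde $\prod_{i<j}(d_j-d_i)$, nonzero because $d<p$, to get $W\ne0$ and $\deg W\le rd$. Both then put $q_{t_0}$ in the first column to obtain $\operatorname{ord}_{t_0}W\ge m-r+1$, and sum these orders against $\deg W$.
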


\begin{proof}
We first check that the determinant is not the zero polynomial.
Elimination on the coefficients gives a basis with distinct degrees
$0\le n_1<\cdots<n_r\le d$. Write $a_j\ne0$ for the leading
coefficient of $f_j$. Put $(z)_0=1$ and
$(z)_i=z(z-1)\cdots(z-i+1)$ for $i\ge1$.
The leading terms in \eqref{eq:wronskian-definition} give
\[
 \deg W=\sum_{j=1}^r n_j-\frac{r(r-1)}2,
\]
provided that the corresponding coefficient is nonzero. That
coefficient is
\[
 \left(\prod_{j=1}^r a_j\right)
 \det\bigl((n_j)_{i-1}\bigr)_{i,j=1}^r
 =\left(\prod_{j=1}^r a_j\right)
   \prod_{u<v}(n_v-n_u).
\]
Indeed, the polynomials $(z)_i$ are monic of degree $i$, so row
operations reduce this determinant to the usual Vandermonde
determinant. Its value is nonzero in characteristic zero. In
characteristic $p$, it is also nonzero because all the distinct
integers $n_j$ lie between $0$ and $d<p$. Consequently, $W\ne0$
and $\deg W\le rd$.

Now fix $t_0 \in \mathcal T $. Extend $q_{t_0}$ to a basis of $\mathcal V $ and put it in
the first column of the determinant. Differentiation $i$ times
decreases the order of a zero by at most $i$. Every entry of this
column is therefore divisible by
\[
 (t-t_0)^{(m-r+1)_+}.
\]
The determinant has the same divisibility. Although this basis may
depend on $t_0$, its Wronskian is a nonzero constant multiple of the
fixed polynomial $W$. Hence,
\[
 \operatorname{ord}_{t_0} W\ge(m-r+1)_+.
\]
The sum of the multiplicities of distinct roots of a nonzero
polynomial is at most its degree. Thus,
\[
 \sum_{t_0 \in \mathcal T }(m-r+1)_+
 \le\sum_{t_0 \in \mathcal T}\operatorname{ord}_{t_0} W
 \le\deg W\le rd,
\]
which is the required bound. {This completes the proof.}
\end{proof}

The condition $d<p$ is essential for this argument. In characteristic
$p$, the independent polynomials $1,t^p$ have zero Wronskian, and
their span contains $(t-t_0)^p=t^p-{t_0}^p$ for every $t_0 \in \mathbb{F}$.
Thus, linear independence alone does not ensure that the determinant
is nonzero. A separate use of a Wronskian for coordinate functions
on a curve is explained in Lemma~\ref{lem:small-degree-flexes}.
In the next section, Lemma~\ref{lem:wronskian} allows us to treat the
first and second derivative conditions together, even when the
relevant polynomial depends on the incidence under consideration.

\section{First and second derivatives along lifted lines}
\label{sec:global-rank-two}

The interpolation argument in Section~\ref{sec:interpolation} supplies
direction curves at selected points. This section first records two
local algebraic facts, then proves the derivative identities needed
in Section~\ref{sec:removal-by-order}. We finish with bounds for
flexes of direction curves and for incidences in families of planes.
These will be used in the proof of Theorem~\ref{thm:main}.

Let $\mathbb{F}$ have characteristic zero or characteristic $p>2$.

The geometric constructions in this section take place over
$\overline{\mathbb{F}}$. Fix finite sets $X$ of points and
$\mathcal L$ of lines in the original three-dimensional space, and
an incidence set
\[
 \mathcal I\subseteq\{(u,\ell)\in X\times\mathcal L:u\in\ell\}.
\]
The pairs in $\mathcal I$ are the \emph{selected incidences}; points
and lines occurring in these pairs are called selected. All the lines are
assumed nonvertical, so each has a unique parametrization
\[
 \ell_{\lambda}(t)=(t,at+b,ct+f),\qquad
 \lambda=(a,b,c,f)\in\overline{\mathbb{F}}^{4}.
\]
Here $t$ is the parameter on the line, and $\lambda$ is its point
in the four-dimensional space of line coordinates. Throughout this
section, $\nabla$ denotes the gradient in those four coordinates;
a gradient evaluated at $\lambda$ is written $(\nabla Q)(\lambda)$.
For a polynomial $R(x,y,z,a,c)$ set
\[
 Q_R(t,\lambda)=R(t,at+b,ct+f,a,c),\qquad
 \mathcal D=\partial_x+a\partial_y+c\partial_z.
\]
In particular, $Q_{\mathcal DR}=\partial_tQ_R$. A separate degree bound
$(d,e)$ means degree at most $d$ in $(x,y,z)$ and at most $e$ in $(a,c)$.

For a point $u=({t}_{0},y_u,z_u)$, the line coordinates of lines through $u$ form the affine two-plane
\[
 \alpha_u=\{b=y_u-t_0 a,\ f=z_u-t_0 c\}\subset \overline{\mathbb{F}}^4.
\]
Its tangent vectors have the form $(\omega_1,-t_0 \omega_1,{\omega}_2,-t_0 \omega_2)$. We shall also
use the following two-dimensional linear subspaces of the tangent space in line coordinates:
\[
 B_{\boldsymbol{\omega}}
 =\{(\tau\omega_1,\sigma\omega_1,\tau\omega_2,\sigma\omega_2):
       \tau,\sigma\in\overline{\mathbb{F}}\},
 \qquad 0\ne\boldsymbol{\omega}\in\overline{\mathbb{F}}^2.
\]
We call $B_{\boldsymbol{\omega}}$ a \emph{beta plane}. The affine plane $\lambda+B_{\boldsymbol{\omega}}$
parametrizes the nonvertical lines contained in one plane through
$\ell_\lambda$ in the original three-dimensional space. Thus, a
beta tangent plane at $\lambda$ determines a unique plane
containing the corresponding line.

We shall use an equivalent test in terms of equations. Let a smooth
surface in the space of line coordinates be locally defined by
$G_{1}=G_{2}=0$, with independent gradients. In the coordinate order
$\zeta=(b,f,a,c)$, put
\[
 p_{rs}=
 \frac{\partial G_{1}}{\partial\zeta_{r}}
 \frac{\partial G_{2}}{\partial\zeta_{s}}
 -
 \frac{\partial G_{1}}{\partial\zeta_{s}}
 \frac{\partial G_{2}}{\partial\zeta_{r}}.
\]
The tangent plane of the surface is a beta plane exactly when
\begin{equation}
 p_{12}=p_{34}=p_{14}-p_{23}=0.
 \label{eq:beta-minors}
\end{equation}
Indeed, the first two equations make each block projection of the
conormal plane have rank at most one. Neither rank is zero, since
then the other would be two. The conormal plane is therefore the
direct sum of a one-dimensional subspace in each block. The third
equation says that these two subspaces agree under the identification
of the intercept and direction blocks. This is precisely the
conormal plane of a beta plane. We call the zero locus of these
three equations on the smooth surface its \emph{beta locus}.

We first record how values and first derivatives detect a
coefficient span of dimension two. This will allow us to choose
two equations for the local smooth surfaces below.

\begin{lemma}
\label{lem:generic-evaluation}
Let $\mathbb{F}$ be an infinite field of characteristic zero or
characteristic $p>d$. Let $\mathcal V\subset\mathbb{F}[t]^{4}$ be a
finite-dimensional vector space whose elements have degree at most
$d$. Suppose the coefficient vectors of all elements of $\mathcal V$
span a two-dimensional subspace of $\mathbb{F}^{4}$. Then, for a generic
$t_{\mathrm{ev}}\in\mathbb{F}$, the vectors
\[
 g(t_{\mathrm{ev}}),\quad g'(t_{\mathrm{ev}}),
 \qquad g\in\mathcal V,
\]
span that subspace. Writing
$\mathcal V'=\{g':g\in\mathcal V\}$, two generic elements of
$\mathcal V+\mathcal V'$ have independent values at
$t_{\mathrm{ev}}$.
\end{lemma}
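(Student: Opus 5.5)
Let $W\subset\mathbb{F}^4$ denote the two-dimensional span of the coefficient vectors of $\mathcal V$, and write $V_t=\operatorname{span}\{g(t),g'(t):g\in\mathcal V\}$. Since every $g(t)$ and $g'(t)$ lies in $W$, we always have $V_t\subseteq W$. It therefore suffices to show that $\dim V_t=2$ for all but finitely many $t$, which is a Zariski-open condition on $t$. Fix a basis $w_1,w_2$ of $W$ and write each $g\in\mathcal V$ as $g=\phi_g w_1+\psi_g w_2$ with $\phi_g,\psi_g\in\mathbb{F}[t]$ of degree at most $d$. Then $V_t$ is the column span of the $2\times 2N$ matrix whose columns are $(\phi_g(t),\psi_g(t))$ and $(\phi_g'(t),\psi_g'(t))$, over a basis $g$ of $\mathcal V$. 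Its $2\times2$ minors are polynomials in $t$. We need only that one of them is not identically zero, and then a generic $t_{\mathrm{ev}}$ avoids its finitely many roots; since $\mathbb{F}$ is infinite, such a point exists.

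Suppose, for contradiction, that all these minors vanish identically. Then, over the field $\mathbb{F}(t)$, all the vectors $(\phi_g,\psi_g)$ and their derivatives lie on one line. If some $g$ has $\phi_g,\psi_g$ linearly independent over $\mathbb{F}$, the minor from $g$ and $g'$ is the Wronskian $\phi_g\psi_g'-\phi_g'\psi_g$. Its vanishing gives $(\psi_g/\phi_g)'=0$, or symmetrically when $\phi_g=0$. As in the proof of Lemma~\ref{lem:transport}, the kernel of differentiation on $\mathbb{F}(t)$ is $\mathbb{F}$ in characteristic zero and $\mathbb{F}(t^p)$ in characteristic $p$. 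A nonconstant element of $\mathbb{F}(t^p)$ has degree at least $p>d$, so $\psi_g/\phi_g$ is constant. This contradicts the independence of $\phi_g,\psi_g$.

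Hence each $g$ has coefficient vector on a single $\mathbb{F}$-line, so $g=h_g v_g$ with $h_g\in\mathbb{F}[t]$ and $v_g\in W$. Because $W$ is spanned, there are $g_1,g_2$ with $v_{g_1},v_{g_2}$ independent. The minor from $g_1$ and $g_2$ is $h_{g_1}h_{g_2}\det(v_{g_1},v_{g_2})$, which is nonzero since both $h$'s are nonzero. This contradicts the assumption that all minors vanish. So $V_{t}=W$ for all $t$ outside a finite set. This is the main step; the only subtlety is excluding $p$-th power phenomena, and the hypothesis $p>d$ handles it.

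For the final assertion, the evaluation map $\mathcal V+\mathcal V'\to\mathbb{F}^4$, $h\mapsto h(t_{\mathrm{ev}})$, has image $V_{t_{\mathrm{ev}}}=W$, which is two-dimensional. The pairs $(h_1,h_2)$ whose values are dependent form the proper Zariski-closed subset cut out by the $2\times2$ minors of the composed linear map. It is proper because some pair maps onto a basis of $W$. Since $\mathbb{F}$ is infinite, two generic elements lie outside this subset and have independent values.
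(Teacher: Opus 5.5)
Your proof is correct and follows essentially the same route as the paper. Both arguments rest on the same two ingredients. First, the derivative-of-ratio (Wronskian) argument, with the hypothesis $p>d$, shows that $g$ and $g'$ cannot be $\mathbb{F}(t)$-dependent unless $g$ is a polynomial times a constant vector. Second, the two-dimensional coefficient span then yields a nonvanishing minor. The difference is only organizational: you pass to coordinates in $W$ and argue by contradiction from the identical vanishing of all minors, whereas the paper splits into cases according to the $\mathbb{F}(t)$-rank of $\mathcal V$.
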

\begin{proof}
First suppose the elements of $\mathcal V$ span a two-dimensional
space over $\mathbb{F}(t)$. Two of them have a nonzero $2\times2$
minor, so their values are independent outside the finite zero set
of that minor.

Otherwise, the span over $\mathbb{F}(t)$ is one-dimensional. Choose
$0\ne g\in\mathcal V$. If $g$ and $g'$ were dependent, every defined
coordinate ratio of $g$ would have derivative zero. Write a nonzero
ratio as $A/B$ with relatively prime polynomials of degree at most
$d$. The identity $A'B=AB'$ implies $A\mid A'$ and $B\mid B'$,
so $A'=B'=0$. In characteristic zero, this makes $A,B$ constant.
The same holds in characteristic $p$, since their degrees are less
than $p$. Thus, every defined coordinate ratio is constant, and $g$ lies on a fixed vector line
over $\mathbb{F}$. Since every element of $\mathcal V$ is a rational
multiple of $g$, all its coefficient vectors lie on that same line.
This contradicts the hypothesis. Hence, $g$ and $g'$ are independent
over $\mathbb{F}(t)$, and their values are independent for generic
$t_{\mathrm{ev}}$.

{All values and derivatives lie in the coefficient span,
so these two cases prove the spanning assertion.
At any evaluation value where this span has dimension two,
two generic elements of $\mathcal V+\mathcal V'$ have
independent values.}
\end{proof}
The next lemma gives the local geometric argument used in
Section~\ref{sec:removal-by-order}. To obtain its coordinate
normalization, translate and shear the original three-dimensional
space so that the selected line becomes the $x$-axis. A linear
change of $(y,z)$ then sends its beta tangent plane to
$\{c=f=0\}$. These changes preserve $x$ and hence the distinct
values of the parameter $t$ at the selected points.

\begin{lemma}
\label{lem:local-beta-geometry}
{Let $\mathbb F$ be a field.} Let ${\Sigma}\mathrel{\subset}{\mathbb A}^{4}_{{\overline{\mathbb{F}}}}$ {be a surface
smooth at the origin}{, with coordinates} $\mathopen{(}{a}\mathpunct{,}{b}\mathpunct{,}{c}\mathpunct{,}{f}\mathclose{)}$ {and}
${T}_{0}{\Sigma}\mathrel{=}{\{}{c}\mathrel{=}{f}\mathrel{=}{0}{\}}${.} {Suppose its local ideal is generated by
polynomials} ${G}_{1}\mathpunct{,}{G}_{2}$ {with independent gradients at the origin.}
{In the completed local ring, write}
\[
 {c}\mathrel{=}{C}_{0}\mathopen{(}{a}\mathpunct{,}{b}\mathclose{)}\mathpunct{,}\qquad {f}\mathrel{=}{F}_{0}\mathopen{(}{a}\mathpunct{,}{b}\mathclose{)}\mathpunct{,}\qquad
 {C}_{0}\mathpunct{,}{F}_{0}\mathrel{\in}{\overline{\mathbb{F}}}\mathopen{[}\mathopen{[}{a}\mathpunct{,}{b}\mathclose{]}\mathclose{]}\mathpunct{,}
\]
{where} ${C}_{0}\mathpunct{,}{F}_{0}$ {have no constant or linear terms. Assume the following.}
\begin{enumerate}
\item {For four distinct values} ${t}_{0}\mathrel{\in}{\overline{\mathbb{F}}}${, there
is a formal curve expansion} ${\gamma}\mathopen{(}{\xi}\mathclose{)}$ {through the origin with}
${\gamma}{'}\mathopen{(}{0}\mathclose{)}\mathrel{\ne}{0}${, contained in}
${\{}{b}\mathrel{=}\mathbin{-}{t}_{0}{a}\mathpunct{,}\ {f}\mathrel{=}\mathbin{-}{t}_{0}{c}{\}}${,} {such that}
\[
 {G}_{1}\mathopen{(}{\gamma}\mathopen{(}{\xi}\mathclose{)}\mathclose{)}\mathpunct{,}\ {G}_{2}\mathopen{(}{\gamma}\mathopen{(}{\xi}\mathclose{)}\mathclose{)}\mathrel{\in}\mathopen{(}{\xi}^{3}\mathclose{)}{.}
\]
\item {There is a reduced curve germ} ${\Gamma}\mathrel{\subset}{\Sigma}$ {through
the origin} {on} {which the} {three minors in \eqref{eq:beta-minors}
vanish, formed from} ${G}_{1}\mathpunct{,}{G}_{2}$ {in the order} $\mathopen{(}{b}\mathpunct{,}{f}\mathpunct{,}{a}\mathpunct{,}{c}\mathclose{)}${.}
\end{enumerate}
{Then the quadratic parts of} ${C}_{0}\mathpunct{,}{F}_{0}$ {vanish. Moreover, every smooth
curve through the origin in a plane}
${\{}{b}\mathrel{=}\mathbin{-}{t}_{0}{a}\mathpunct{,}\ {f}\mathrel{=}\mathbin{-}{t}_{0}{c}{\}}$ {that satisfies} ${G}_{1}\mathrel{=}{G}_{2}\mathrel{=}{0}$ {through order
two {has contact of order at least three with its tangent line
$\{b=-t_0a,\ c=f=0\}$.}
The curve} ${\Gamma}$ {need} {not be smooth, and the curves in
\textup{(i)}} {need} {not lie in} ${\Sigma}${.}
\end{lemma}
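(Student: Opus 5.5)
The plan is to reduce everything to the quadratic parts of the graph functions and show that (i) forces a rigid shape while (ii) kills what remains. Write $C_2,F_2$ for the quadratic parts of $C_0,F_0$. Since $G_1,G_2$ generate the local ideal and have independent gradients, in the completed local ring the ideal $(G_1,G_2)$ equals $(c-C_0(a,b),\,f-F_0(a,b))$. Hence, for any formal curve $\gamma$, having $G_1(\gamma),G_2(\gamma)\in(\xi^3)$ is equivalent to $c-C_0$ and $f-F_0$ vanishing to order three along $\gamma$. The beta minors formed from $G_1,G_2$ differ from those formed from $c-C_0,\,f-F_0$ by multiplication by the determinant of the invertible change-of-generators matrix, modulo the ideal. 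On $\Sigma$ we may therefore compute with $G_1'=c-C_0$ and $G_2'=f-F_0$.

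\emph{Step 1 (tangent directions and the quadratic identity).} A curve $\gamma$ as in (i) has tangent in $T_0\Sigma$ up to the order used: the linear terms of $c-C_0$ and $f-F_0$ along $\gamma$ are just those of $c,f$. So $\gamma'(0)$ lies in $\{c=f=0\}\cap\{b=-t_0a,\ f=-t_0c\}$, which is the line spanned by $(1,-t_0,0,0)$ in the order $(a,b,c,f)$. We reparametrize by $\xi=a$ and write $\gamma(\xi)=(\xi,-t_0\xi,c(\xi),-t_0c(\xi))$ with $c(\xi)=\kappa\xi^2+O(\xi^3)$. Order-two vanishing gives $\kappa=C_2(1,-t_0)$ and $-t_0\kappa=F_2(1,-t_0)$. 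Thus the polynomial $F_2(1,-t)+tC_2(1,-t)$, of degree at most three in $t$, vanishes at four distinct values and is therefore identically zero. Homogenizing with $t=-b/a$ yields the cubic-form identity $aF_2(a,b)=bC_2(a,b)$. By unique factorization, $C_2=aL$ and $F_2=bL$ for a linear form $L=\alpha a+\beta b$.

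\emph{Step 2 (the beta locus forces $L=0$; main step).} With $G_1'=c-C_0$, $G_2'=f-F_0$ and the order $(b,f,a,c)$, a direct computation gives
\[
p_{12}=-\partial_bC_0,\qquad p_{34}=\partial_aF_0,\qquad p_{14}-p_{23}=\partial_bF_0-\partial_aC_0 .
\]
On $\Sigma$, parametrized by $(a,b)$, the linear parts of these three functions are $\beta a$, $\alpha b$, and $-\alpha a+\beta b$ respectively. The last one equals $\alpha a-\beta b$ in characteristic two, which makes no difference below. If $(\alpha,\beta)\ne(0,0)$, two of these three functions have linearly independent linear parts in $(a,b)$:
\begin{itemize}
\item both $\alpha,\beta\neq0$: use the first two;
\item $\alpha=0$: use the first and third;
\item $\beta=0$: use the second and third.
\end{itemize}
By the formal inverse function theorem, their common zero set in the germ of $\Sigma\cong(\mathbb A^2,0)$ is then the origin alone. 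That contradicts the existence of the reduced curve germ $\Gamma$ in (ii). Hence $L=0$, so $C_2=F_2=0$. Note that $\Gamma$ need not be smooth, since only its positive dimensionality is used.

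\emph{Step 3 (contact).} Let a smooth curve through the origin in $\{b=-t_0a,\ f=-t_0c\}$ satisfy $G_1=G_2=0$ through order two. Step 1 applies verbatim, with tangent $(1,-t_0,0,0)$ and $c(\xi)=C_2(1,-t_0)\xi^2+O(\xi^3)=O(\xi^3)$, hence also $f=O(\xi^3)$. Thus it has contact of order at least three with $\{b=-t_0a,\ c=f=0\}$. The curves in (i) were never required to lie in $\Sigma$.

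I expect Step 2 to be the main obstacle. The care lies in correctly passing from $G_1,G_2$ to the graph generators, checking that the minors only change by a unit factor on $\Sigma$, and then verifying the linear-part computation in each case, including characteristic two.
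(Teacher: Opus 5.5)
Your proposal is correct and follows the paper's proof essentially step for step: you make the same reduction to the graph generators $c-C_0,\ f-F_0$, with the minors changing by a unit factor. You derive the same cubic identity $aF_2=bC_2$ from four points of $\mathbb P^1$, obtaining $C_2=a(\alpha a+\beta b)$ and $F_2=b(\alpha a+\beta b)$, and the same three minors $-(C_0)_b,\ (F_0)_a,\ (F_0)_b-(C_0)_a$. The only difference is at the end, and it is cosmetic: the paper shows that the linear parts annihilate a nonzero Zariski tangent vector $(h_1,h_2)$ of $\Gamma$, whereas you choose two minors with independent linear parts and use the formal inverse function theorem to isolate the origin, which is the same linear-algebra fact in dual form. (The linear part of $-(C_0)_b$ is $-\beta a$ rather than $\beta a$, which does not affect the argument.)
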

\begin{proof}
{Smoothness and the tangent-plane assumption give the equality of ideals}
\[
 \mathopen{(}{G}_{1}\mathpunct{,}{G}_{2}\mathclose{)}\mathrel{=}\mathopen{(}{c}\mathbin{-}{C}_{0}\mathopen{(}{a}\mathpunct{,}{b}\mathclose{)}\mathpunct{,}\ {f}\mathbin{-}{F}_{0}\mathopen{(}{a}\mathpunct{,}{b}\mathclose{)}\mathclose{)}
\]
{in} ${\overline{\mathbb{F}}}\mathopen{[}\mathopen{[}{a}\mathpunct{,}{b}\mathpunct{,}{c}\mathpunct{,}{f}\mathclose{]}\mathclose{]}${. Substitution of a formal
curve through the origin is a homomorphism to}
${\overline{\mathbb{F}}}\mathopen{[}\mathopen{[}{\xi}\mathclose{]}\mathclose{]}${. Thus, hypothesis \textup{(i)}
implies that both graph equations hold modulo} ${\xi}^{3}${.}

{Write} ${C}_{2}\mathpunct{,}{F}_{2}$ {for the quadratic parts. Each expansion in
\textup{(i)} has tangent vector proportional to}
$\mathopen{(}{1}\mathpunct{,}\mathbin{-}{t}_{0}\mathpunct{,}{0}\mathpunct{,}{0}\mathclose{)}${: its tangent lies in} ${T}_{0}{\Sigma}$ {and in the indicated
plane, and is nonzero. After rescaling the parameter, comparison
of the terms of degree two gives}
\[
 {F}_{2}\mathopen{(}{1}\mathpunct{,}\mathbin{-}{t}_{0}\mathclose{)}\mathrel{=}\mathbin{-}{t}_{0}{C}_{2}\mathopen{(}{1}\mathpunct{,}\mathbin{-}{t}_{0}\mathclose{)}{.}
\]
{The homogeneous cubic} ${a}{F}_{2}\mathbin{-}{b}{C}_{2}$ {vanishes at four distinct points
of} ${\mathbb P}^{1}${, so it vanishes identically. Since} ${a}$ {and} ${b}$
{are relatively prime,}
\begin{equation}
 {C}_{2}\mathrel{=}{a}\mathopen{(}{\alpha} {a}\mathbin{+}{\beta} {b}\mathclose{)}\mathpunct{,}\qquad
 {F}_{2}\mathrel{=}{b}\mathopen{(}{\alpha} {a}\mathbin{+}{\beta} {b}\mathclose{)}
 \label{eq:beta-quadratic-form}
\end{equation}
{for some} ${\alpha}\mathpunct{,}{\beta}\mathrel{\in}{\overline{\mathbb{F}}}${.}

{Since} ${\Gamma}$ {is reduced, the polynomial minors that vanish on it
belong to its local defining ideal. Their images therefore vanish
in the completed local ring of} ${\Gamma}${.
On} ${\Sigma}${, the differentials of} ${G}_{1}\mathpunct{,}{G}_{2}$ {and those} {of the} {graph
equations are two bases of the conormal module over the completed
local ring. Their gradient matrices therefore differ by an
invertible} ${2}\mathbin{\times}{2}$ {matrix. All minors are multiplied by} {its}
{determinant, which} {is a} {unit. For the graph equations, in the
coordinate order} $\mathopen{(}{b}\mathpunct{,}{f}\mathpunct{,}{a}\mathpunct{,}{c}\mathclose{)}${, the three minors in
\eqref{eq:beta-minors} are}
\[
 \mathbin{-}\mathopen{(}{C}_{0}\mathclose{)}_{b}\mathpunct{,}\qquad \mathopen{(}{F}_{0}\mathclose{)}_{a}\mathpunct{,}\qquad \mathopen{(}{F}_{0}\mathclose{)}_{b}\mathbin{-}\mathopen{(}{C}_{0}\mathclose{)}_{a}{.}
\]
{Consequently}{,} {hypothesis \textup{(ii)} gives}
\[
 \mathopen{(}{C}_{0}\mathclose{)}_{b}\mathrel{=}\mathopen{(}{F}_{0}\mathclose{)}_{a}\mathrel{=}\mathopen{(}{C}_{0}\mathclose{)}_{a}\mathbin{-}\mathopen{(}{F}_{0}\mathclose{)}_{b}\mathrel{=}{0}
\]
{in the completed local ring of} ${\Gamma}${.
Since} ${\Gamma}$ {has positive dimension, its Zariski tangent space
contains a nonzero vector} $\mathopen{(}{h}_{1}\mathpunct{,}{h}_{2}\mathclose{)}$ {in the} ${a}\mathpunct{,}{b}$ {coordinates.
The linear terms of these identities annihilate that vector.
Using \eqref{eq:beta-quadratic-form}, we obtain}
\[
 {\beta} {h}_{1}\mathrel{=}{0}\mathpunct{,}\qquad {\alpha} {h}_{2}\mathrel{=}{0}\mathpunct{,}\qquad
 {\alpha} {h}_{1}\mathbin{-}{\beta} {h}_{2}\mathrel{=}{0}{.}
\]
{If either} ${\alpha}$ {or} ${\beta}$ {were nonzero, these equations would
force} ${h}_{1}\mathrel{=}{h}_{2}\mathrel{=}{0}${. Thus,} ${C}_{2}\mathrel{=}{F}_{2}\mathrel{=}{0}${. This argument uses only the
Zariski tangent space and applies also when} ${\Gamma}$ {is singular.}

{For any curve in the final assertion,} {the graph} {equations now
give} ${c}\mathpunct{,}{f}\mathrel{=}{O}\mathopen{(}{\xi}^{3}\mathclose{)}${, while} ${b}\mathrel{=}\mathbin{-}{t}_{0}{a}${. Its tangent line is}
${\{}{b}\mathrel{=}\mathbin{-}{t}_{0}{a}\mathpunct{,}\ {c}\mathrel{=}{f}\mathrel{=}{0}{\}}${, so it has the asserted contact.
This completes the proof.}
\end{proof}

{Let} ${\mathcal S}$ {be} {any} ${\overline{\mathbb{F}}}${-}{vector space of
polynomials of separate degrees at most} $\mathopen{(}{d}\mathpunct{,}{e}\mathclose{)}$ {that vanish identically on the} {lifts of the}
{selected lines. Define}
\[
\rho(\lambda)=\dim_{{\overline{\mathbb{F}}}}\operatorname{span}
 \left\{\bigl(\nabla\mathopen{(}[t^j]Q_R\mathclose{)}\bigr)(\lambda):R\in\mathcal S,\ j\ge0\right\}.
\]
The common zero scheme of these coefficient polynomials has tangent
space of dimension $4-\rho(\lambda)$ at {the line coordinates} ${\lambda}$ {of} a selected {line}.

\begin{lemma}\label{thm:global-rank-two}
{Let $\mathbb F$ have characteristic zero or characteristic
$p>2$. Fix finite sets $X\subseteq\overline{\mathbb F}^{3}$
and $\mathcal L$ of nonvertical lines, together with
selected incidences between them. Let $d,e,m_1$ be positive
integers with $m_1>13$, and assume $d<p$ in positive
characteristic. Let $\mathcal S$ be an
$\overline{\mathbb F}$-vector space of polynomials with
separate degree bounds $(d,e)$ that vanish identically
on the lifts of all selected lines.

At each selected point, fix reduced irreducible direction
components of total degree at most $e$, and assign each
selected incidence to a component that is smooth at its
direction. Suppose that, for every $R\in\mathcal S$ and
every specified component $C$ at $u$,
\[
 R\bigl(u+\tau(1,a,c),a,c\bigr)
 \in\tau^{m_1}\overline{\mathbb F}[C][\tau].
\]
These components and assignments remain fixed when
incidences are discarded.}
On each selected line one can discard at most $14d/(m_1-13)$
incidences so that{, at every remaining incidence, a smooth local
parametrization} ${\gamma}\mathopen{(}{\xi}\mathclose{)}$ {of its direction component in}
${\alpha}_{u}${, with} ${\gamma}\mathopen{(}{0}\mathclose{)}\mathrel{=}{\lambda}${, satisfies}
\[
 \left.{\partial}_{\xi} {Q}_{R}\mathopen{(}{t}\mathpunct{,}{\gamma}\mathopen{(}{\xi}\mathclose{)}\mathclose{)}\right|_{{\xi}\mathrel{=}{0}}
 \mathrel{=}\left.{\partial}_{\xi}^{2}{Q}_{R}\mathopen{(}{t}\mathpunct{,}{\gamma}\mathopen{(}{\xi}\mathclose{)}\mathclose{)}\right|_{{\xi}\mathrel{=}{0}}\mathrel{=}{0}
 \qquad\mathopen{(}{R}\mathrel{\in}{\mathcal S}\mathbin{+}{\mathcal D}{\mathcal S}\mathclose{)}\mathpunct{,}
\]
{identically in} ${t}$. {Every line with at least three selected points remaining
after these deletions has coefficient rank at most two.} If its rank is two, {the}
tangent plane {defined by its coefficient equations} is a beta plane.
\end{lemma}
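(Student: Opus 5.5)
The plan is to treat the two assertions separately. The deletion step comes from Lemma~\ref{lem:wronskian} applied along each selected line, and the rank statement is linear algebra on the tangent vectors of the direction curves. Fix a selected line with coordinates $\lambda$. At a selected incidence at parameter $t_0$ with direction component $C$, a smooth local parametrization $(a(\xi),c(\xi))$ of $C$ gives the curve
\[
 \gamma(\xi)=\bigl(a(\xi),\,y_u-t_0a(\xi),\,c(\xi),\,z_u-t_0c(\xi)\bigr)\subset\alpha_u ,
\]
and then
\[
 Q_R(t,\gamma(\xi))=R\bigl(u+(t-t_0)(1,a(\xi),c(\xi)),a(\xi),c(\xi)\bigr).
\]
The hypothesis says this lies in $(t-t_0)^{m_1}\overline{\mathbb F}[[\xi]][t]$ for $R\in\mathcal S$. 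Hence
\[
 q_1=\partial_\xi Q_R(t,\gamma(\xi))|_{\xi=0},\qquad q_2=\partial_\xi^2 Q_R(t,\gamma(\xi))|_{\xi=0}
\]
are polynomials in $t$ of degree at most $d$ vanishing to order at least $m_1$ at $t_0$. By the chain rule, $q_1=\nabla Q_R(t,\lambda)\cdot\gamma'(0)$ and $q_2=\gamma'(0)^{T}(\nabla^2Q_R)(t,\lambda)\gamma'(0)+\nabla Q_R(t,\lambda)\cdot\gamma''(0)$. So both lie in the space $\mathcal V_R$ spanned by the $4$ first and $10$ second partial derivatives of $Q_R$ in the line coordinates, evaluated at $\lambda$. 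Thus $\dim\mathcal V_R\le14$. The space $\mathcal D\mathcal S$ needs no separate treatment: since $Q_{\mathcal DR}=\partial_tQ_R$, the identities for $\mathcal DR$ follow by differentiating in $t$ those for $R$.

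To get one deletion set valid for all $R$, call $t_0$ bad if $q_1$ or $q_2$ is not identically zero for some $R\in\mathcal S$. For fixed $t_0$, the set of $R$ for which both vanish identically is a linear subspace, and at a bad $t_0$ it is proper. There are finitely many bad parameters and $\overline{\mathbb F}$ is infinite, so a generic $R\in\mathcal S$ lies outside all these proper subspaces. For that $R$, every bad $t_0$ yields a nonzero element of the fixed space $\mathcal V_R$ with order at least $m_1$ at $t_0$. Lemma~\ref{lem:wronskian} with $r\le14$ and the hypothesis $d<p$ then gives
\[
 |\{\text{bad }t_0\}|\,(m_1-13)\le14d .
\]
We delete exactly these incidences.

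For the rank statement, suppose at least three incidences at distinct parameters $t_1,t_2,t_3$ survive on the line. The identity $q_1\equiv0$ says that every coefficient $[t^j]Q_R$ has gradient at $\lambda$ annihilating $v_i=\gamma_i'(0)=(\omega^i_1,-t_i\omega^i_1,\omega^i_2,-t_i\omega^i_2)$ in the order $(a,b,c,f)$, where $\omega^i\neq0$ is the tangent direction of the component at the $i$-th point. So the tangent space of the coefficient scheme, of dimension $4-\rho(\lambda)$, contains $W=\operatorname{span}\{v_1,v_2,v_3\}$. Two of these vectors with distinct $t_i$ cannot be proportional, so $\dim W\ge2$ and $\rho\le2$. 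If all $\omega^i$ are parallel to one $\omega$, then $W=B_\omega$. If two are not parallel, a direct check on the $2\times2$ blocks $(a,c)$ and $(b,f)$ should show $\dim W\ge3$, forcing $\rho\le1$. Hence, when $\rho=2$, the tangent plane has dimension two and contains $W$, so it equals $W=B_\omega$, a beta plane.

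I expect the main obstacle to be the uniformity of the deletion over $R$, so that a single 14-dimensional space feeds the Wronskian bound rather than the span over all $R\in\mathcal S$, whose dimension is uncontrolled; the genericity argument above is what I would try first. The remaining delicate points are the bookkeeping that the second-derivative polynomial really lies in $\mathcal V_R$ with degree at most $d<p$, and the verification that non-parallel $\omega$'s give a three-dimensional $W$.
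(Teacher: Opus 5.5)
Your proposal is correct and is essentially the paper's proof: the same generic choice of $R$ feeds the fixed span of the four gradient and ten Hessian entries into Lemma~\ref{lem:wronskian}, the identities for $\mathcal D\mathcal S$ again follow by differentiating in $t$, and the rank bound again uses the tangent vectors $(1,-t_i)\otimes\omega^i$ in the alpha planes. The differences are minor: you rightly omit the paper's extra scalar-combination step, since the Wronskian lemma already lets $q_{t_0}$ depend on $t_0$, and your unfinished ``direct check'' does hold (if, say, $\omega^1,\omega^2$ are independent, write $\omega^3=\alpha\omega^1+\beta\omega^2$; pairwise independence of the vectors $(1,-t_i)$ then forces every coefficient in a relation among $v_1,v_2,v_3$ to vanish), which replaces the paper's remark that a line meeting the quadric of rank-one matrices in three points with distinct $t_i$ is the ruling with fixed $\omega$.
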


\begin{proof}
We show that, after discarding a controlled number of incidences,
the first and second derivatives of $Q_R(t,\gamma({\xi}))$ with respect
to ${\xi}$ vanish at ${\xi}=0$, identically in $t$. Here ${\xi}$ parametrizes a
direction curve, whereas $t$ parametrizes the lifted line. At an
incidence {whose selected point has} parameter ${t}_{0}$, lift a smooth local parametrization
of its direction component to $\alpha_u$, and write its Taylor expansion as
\[
 \gamma({\xi})=\lambda+{\xi} {\mathbf{v}}+\tfrac12{\xi}^2{\mathbf{w}}+O({\xi}^3),\qquad {\mathbf{v}}\ne0.
\]
The factor $1/2$ is legitimate because {the characteristic is not two}.
The first and second
variations of $Q_R$ along this curve are
\begin{equation}
 {\mathbf{v}}\cdot\mathopen{(}\nabla Q_R\mathclose{)}\mathopen{(}{t}\mathpunct{,}{\lambda}\mathclose{)},\qquad
 {\mathbf{v}}^T\mathopen{(}\nabla^2Q_R\mathclose{)}\mathopen{(}{t}\mathpunct{,}{\lambda}\mathclose{)}{\mathbf{v}}
 +{\mathbf{w}}\cdot\mathopen{(}\nabla Q_R\mathclose{)}\mathopen{(}{t}\mathpunct{,}{\lambda}\mathclose{)}.
 \label{eq:alpha-two-variations}
\end{equation}
{Since $\gamma(\xi)$ lies in $\alpha_u$, writing its direction
coordinates as $a(\xi),c(\xi)$ gives
\[
 Q_R(t,\gamma(\xi))
 =R\bigl(u+(t-t_0)(1,a(\xi),c(\xi)),a(\xi),c(\xi)\bigr).
\]
The radial hypothesis makes this formal series divisible by
$(t-t_0)^{m_1}$, and hence the same holds for both variations
in \eqref{eq:alpha-two-variations}.}

{Discard precisely those selected incidences at which at least
one variation in \eqref{eq:alpha-two-variations} is not
identically zero in $t$ for some $R\in\mathcal S$.
We bound their number on each line.}
Fix an input line. For a fixed $R$, all the polynomials in
\eqref{eq:alpha-two-variations} belong to the span of its four gradient
and ten Hessian entries along that line. {Since $Q_R(t,\lambda)$ has degree at most $d$ in $t$,
and differentiation in the line coordinates does not
increase this degree, all fourteen entries have degree
at most $d$ in $t$.} Because there are only finitely many selected incidences,
one may first choose $R\in\mathcal S$ so that at least one variation
is nonzero at each incidence where this holds for some member of
$\mathcal S$. One may then choose a scalar combination of the two
variations that is nonzero at each such incidence. Each of these two
choices avoids finitely many proper linear conditions over the
infinite field ${\overline{\mathbb{F}}}$.

If all variations are zero, nothing is discarded. Otherwise, fix the
choices just made and let ${\mathcal V}$ be the span of the four gradient and
ten Hessian entries. Then $1\le\dim {\mathcal V}=r\le14$, and every member of
${\mathcal V}$ has degree at most $d$. At each incidence where some variation
is not identically zero, our chosen combination gives a nonzero
polynomial $q_{{t}_{0}}\in {\mathcal V}$ with a zero of order at least $m_1$ at the
corresponding {value} ${t}_{0}$ {of the} parameter ${t}$. The polynomial $q_{{t}_{0}}$ may depend on
the incidence, but the space ${\mathcal V}$ is fixed.

This is exactly the situation of Lemma~\ref{lem:wronskian}.
The single Wronskian of ${\mathcal V}$ has degree at most $rd\le14d$, and
each such incidence gives a zero of order at least
$m_1-r+1\ge m_1-13$. The number of these incidences is therefore at
most $14d/(m_1-13)$. This explains both the constant $14$ and the
loss of $13$ in the statement. After discarding them, both variations
vanish identically in $t$ for every $R\in\mathcal S$.

Differentiating these identities in $t$ proves the same assertion for
$\mathcal D\mathcal S$, since ${\mathbf{v}},{\mathbf{w}}$ are constant in $t$ and
$Q_{\mathcal DR}=\partial_tQ_R$. Thus, substituting each remaining
Taylor expansion into the coefficient equations from
$\mathcal S+\mathcal D\mathcal S$ gives zero through degree two in
${\xi}$.

We next use the first derivative identities to determine the tangent
space at {the line coordinates} ${\lambda}$ {of} a selected line. After
translating ${\lambda}$ to the origin{, group its direction coordinates} and {its} intercept
coordinates {into the two rows of} a ${2}\mathbin{\times}{2}$ {matrix. Each} remaining
{tangent vector then gives a nonzero rank-one matrix: its second row}
is $-{t}_{0}$ {times its first row}. Distinct {selected points} give distinct points on
the projective quadric of rank-one matrices. Two such vectors
are independent, so the common coefficient tangent space has dimension
at least two and $\rho\le2$. When $\rho=2$, three remaining {selected points} give
three distinct points of that quadric on its projectivized tangent
space, which is a line. The line must lie on the quadric. Its points
have different {values} ${t}_{0}$ {of the parameter} ${t}$, so it is the other ruling of
the quadric, precisely a beta plane.

{This completes} the {proof}.
\end{proof}

We {next record the flex} bound {needed after applying
Lemma~\ref{lem:local-beta-geometry} in
Section~\ref{sec:removal-by-order}}. The degree restriction is needed
because a curve in positive characteristic can have unusually high
contact with its tangent lines.

\begin{lemma}
\label{lem:small-degree-flexes}
Let ${\mathbb{F}}$ {have characteristic zero or characteristic} ${p}\mathrel{>}{2}${,
and let} $C\subset\mathbb P^2_{{\overline{\mathbb{F}}}}$ be an
irreducible nonlinear curve of degree ${h}\mathrel{\ge}2${. In positive
characteristic, assume} $h<p$. Then $C$ has at most $3h(h-2)$
smooth flexes.
\end{lemma}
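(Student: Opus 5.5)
We plan to bound the smooth flexes by the classical Hessian argument, adapted to positive characteristic through a Wronskian along a local parametrization, as the remark after Lemma~\ref{lem:wronskian} suggests. Let $F$ be the irreducible homogeneous polynomial of degree $h$ defining $C$. In characteristic zero, or when $p>h$, a smooth point $x\in C$ is a flex exactly when the Hessian $H_F=\det(\partial_i\partial_jF)$ vanishes at $x$. The Hessian has degree $3(h-2)$. If $H_F$ does not vanish identically on $C$, then B\'ezout's theorem bounds the number of common points by $h\cdot3(h-2)=3h(h-2)$. So the plan has two steps: show that $H_F\not\equiv0$ on $C$ when $C$ is nonlinear and $h<p$, and verify the equivalence between smooth flexes and zeros of $H_F$ in this characteristic range.

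\emph{Equivalence step.} At a smooth point $x$, use Euler's identity $\sum x_iF_i=hF$ and its derivative $\sum_j x_jF_{ij}=(h-1)F_i$. Both are valid since $h,h-1\not\equiv0\pmod p$ when $2\le h<p$. These let us perform the standard row and column reduction of the Hessian matrix to a bordered matrix. The result is that $H_F(x)$ is a nonzero constant multiple, namely one involving $(h-1)^2$, of the second fundamental form $Q(v)=\sum F_{ij}(x)v_iv_j$ restricted to a tangent vector $v$. Restricting $F$ to the tangent line $x+sv$ gives $F(x+sv)=\tfrac12 s^2Q(v)+O(s^3)$. Since $p>2$, $Q(v)=0$ holds exactly when the contact order is at least three, which is the definition of a flex. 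Hence the smooth flexes lie in $C\cap\{H_F=0\}$.

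\emph{Nonvanishing step.} This is the main obstacle. Suppose instead that every smooth point of $C$ is a flex. Take a local parametrization at a general smooth point, in an affine chart where $C$ is the graph $y=\phi(x)$ with $\phi$ a power series. Being a flex everywhere means $\phi''\equiv0$ identically. In characteristic zero this forces $\phi$ to be linear, so $C$ is a line, contradicting $h\ge2$. In characteristic $p$, the condition $\phi''=0$ only says that all coefficients of $x^n$ with $n(n-1)\not\equiv0$ vanish. The surviving monomials have exponents $n\equiv0,1\pmod p$. We plan to rule this out using $h<p$, in the spirit of the Wronskian nonvanishing in Lemma~\ref{lem:wronskian}. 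Consider the field of rational functions on $C$, where $x$ is a separating transcendental, since $C$ is nonlinear of degree less than $p$ and the projection to $x$ has degree at most $h-1<p$. Then $y$ is algebraic over $\overline{\mathbb F}(x)$ of degree at most $h-1<p$, and $y''=0$. Differentiating the minimal polynomial relation $G(x,y)=0$ twice expresses $y''$ as a rational function. Its numerator is, up to factors, the restriction of the Hessian to $C$, a polynomial of degree less than a multiple of $h$. The key claim is that $y'$ then lies in the kernel of differentiation, $\overline{\mathbb F}(C)^p\cdot$-type constants. From $(y')'=0$ we get that $y'$ is a $p$th power in the function field. But $y'$ is algebraic over $\overline{\mathbb F}(x)$ of degree less than $p$ and has small height. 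The argument used in the proof of Lemma~\ref{lem:transport}, that nonconstant elements of the constant field have degree at least $p$, should force $y'$ to be constant. Then $y-\mu x$ has zero derivative and, by the same degree argument, is constant. So $C$ is a line, which is a contradiction. We expect the care to go into the degree bookkeeping for $y'$ as an element of the function field. The tool we would use is that $d/dx$ restricted to $\overline{\mathbb F}(C)$ has kernel $\overline{\mathbb F}(C)^p$, since $x$ is a separating element, and that elements of $\overline{\mathbb F}(C)^p$ not in $\overline{\mathbb F}$ have pole divisors of degree at least $p>h$. A direct bound on the pole degree of $y'$ in terms of $h$ would then finish the argument.
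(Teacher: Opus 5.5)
You have the same overall architecture as the paper: the Hessian has degree $3(h-2)$, and Euler's identities (factor $(h-1)^2$, with $p>2$) show it vanishes at a smooth point exactly when that point is a flex. B\'ezout then gives $3h(h-2)$ once the Hessian does not contain $C$, and your equivalence step is fine. The gap is in the nonvanishing step in characteristic $p$. Your proposed finish is to put $y'$ in $\overline{\mathbb F}(C)^p$ and then bound its pole degree below $p$. That does not work as stated. The only general bound comes from $y'=-F_x/F_y$ on $C$, and the divisor of $F_y$ on $C$ has degree $h(h-1)$. For a general curve of degree $h$, $y'$ really has about $h(h-1)$ poles, one at each vertical tangent. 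So no bound ``in terms of $h$'' puts the pole degree below $p$ once $h\gtrsim\sqrt p$, although the lemma allows every $h<p$. Your plan gives no mechanism for using $y''=0$ to improve this. The analogy with Lemma~\ref{lem:transport} also fails: there the numerator and denominator had degree less than $p$, while here nothing controls the degree of $y'$ as a map $C\to\mathbb P^1$. Claims such as ``should force $y'$ to be constant'' are therefore unsupported.

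The missing idea is already in your own power-series observation. Take a general smooth point $q$ where $u=x-x(q)$ is a local parameter; this exists because the projection has degree at most $h<p$ and is therefore separable. Writing $y=\sum c_ju^j$, the identity $y''=0$ gives $j(j-1)c_j=0$, so $c_2=\cdots=c_{p-1}=0$. Hence the tangent line $y=c_0+c_1u$ meets $C$ at $q$ with multiplicity at least $p>h$. Since $C$ is irreducible and not a line, the tangent line is not a component of $C$, and B\'ezout bounds this multiplicity by $h$, a contradiction. This is exactly the paper's argument, phrased there as $W(1,x,y)=D^2y\ne0$, and it replaces your whole function-field detour. You should also justify why ``every smooth point is a flex'' gives $y''\equiv0$ as a formal identity. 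One way is to use the derivation $D=d/dx$ on $\overline{\mathbb F}(C)$ (again by separability) and note that it agrees with $d/du$ on the completed local ring. The other is to relate $H_F|_C$ to a nonzero multiple of $F_y^3D^2y$.
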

\begin{proof}
We first show that a general smooth point of $C$ is not a flex.
Choose {the center of a linear projection outside} ${C}${, and choose}
affine coordinates $x,y$ so that {this is} projection to the $x$-line{.
Its} degree on the normalization of $C$ {is} ${h}$. {It} is separable {in
characteristic zero. In characteristic} ${p}${,} its inseparable degree
is a power of $p$ dividing $h<p$, and hence {also} equals one. Thus,
$x-x(q)$ is a local parameter at a general smooth point $q$, and
$d/dx$ extends uniquely to a derivation $D$ on the function field
${\overline{\mathbb{F}}}(C)$.

The Wronskian of the coordinate functions $1,x,y$ is particularly
simple. It is the determinant
\[
 W(1,x,y)=
 \det\begin{pmatrix}
 1&x&y\\
 0&1&Dy\\
 0&0&D^2y
 \end{pmatrix}
 =D^2y.
\]
In a local coordinate $u=x-x(q)$, the coefficient of $u^2$ in
the expansion of $y$ is $D^2y(q)/2$. Its vanishing says exactly
that the curve has contact of order at least three with its tangent
line. Thus, this determinant detects flexes wherever $x-x(q)$ is a
local parameter.

We claim that $D^2y$ is not identically zero. If it were, expand at
a general smooth point $q$:
\[
 y=c_0+c_1u+c_2u^2+\cdots,\qquad u=x-x(q).
\]
Here $D$ acts as differentiation with respect to $u$. The equation
$D^2y=0$ gives $j(j-1)c_j=0$ for every $j\ge2$. {In characteristic
zero}, {all coefficients} ${c}_{j}$ {with} ${j}\mathrel{\ge}{2}$ {vanish. In characteristic}
${p}${, one has}
\[
 c_2=c_3=\cdots=c_{p-1}=0.
\]
{In either case, the} tangent line $y=c_0+c_1u$ {has} intersection
multiplicity {greater than} ${h}$ with $C$ at $q$. {This contradicts
B\'ezout, since} $C$ is irreducible and is not a line. Consequently,
$D^2y\ne0$, and a general smooth point is not a flex.

Let ${\Phi}$ be a homogeneous equation of $C$. At a smooth point choose
affine coordinates with that point at the origin and tangent line
$y=0$. For the affine equation ${\phi}$, one has ${\phi}_x=0$ and ${\phi}_y\ne0$
there. Euler's identities give
\[
 \det\operatorname{Hess}({\Phi})=-(h-1)^2 {\phi}_y^2 {\phi}_{xx}
\]
at that point. Since $2$ and $h-1$ {are} nonzero in ${\overline{\mathbb{F}}}$, this determinant
vanishes precisely when the tangent contact is at least three.
Since a general smooth point is not a flex, the Hessian does not
contain $C$. Its degree is $3(h-2)$, so B\'ezout bounds the number of
smooth flexes by $3h(h-2)$. {This completes the proof.}
\end{proof}

{To count incidences} in fixed {planes, the} next lemma sums the planar
incidence estimate of Lemma~\ref{thm:planar-incidences} over {a
family of} planes. Its hypothesis on the total number of points allows one point
to occur in several planes.

{
\begin{lemma}\label{lem:plane-packets}
Let $\mathbb F$ have characteristic zero or characteristic
$p>2$. Suppose a family of at most $n$ lines over $\mathbb F$
is partitioned into subfamilies $\mathcal L_\Pi$ contained
in affine planes $\Pi$, with $|\mathcal L_\Pi|\le s$.
Let $N\ge0$, and let $X_\Pi\subseteq\Pi(\mathbb F)$ be finite
sets satisfying $\sum_\Pi|X_\Pi|\le N$.
In positive characteristic,
\[
 \sum_\Pi I(X_\Pi,\mathcal L_\Pi)
 \ll \frac{sN}{p}+N^{2/3}(sn)^{1/3}+N+n.
\]
In characteristic zero, the same bound holds without
the term $sN/p$.
\end{lemma}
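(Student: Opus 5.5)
Apply Lemma~\ref{thm:planar-incidences} separately in each plane $\Pi$, with point set $X_\Pi$ and line set $\mathcal L_\Pi$, and then sum over $\Pi$. Each plane is an affine plane over $\mathbb F$, and a choice of affine coordinates identifies it with $\mathbb F^2$, so the lemma applies. Writing $N_\Pi=|X_\Pi|$ and $n_\Pi=|\mathcal L_\Pi|$, we get
\[
 \sum_\Pi I(X_\Pi,\mathcal L_\Pi)\ll \sum_\Pi (N_\Pi n_\Pi)^{2/3}+\sum_\Pi N_\Pi+\sum_\Pi n_\Pi+\sum_\Pi\frac{N_\Pi n_\Pi}{p}.
\]
We bound the four sums in turn, using only $n_\Pi\le s$, $\sum_\Pi n_\Pi\le n$ (the subfamilies partition at most $n$ lines) and $\sum_\Pi N_\Pi\le N$.

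The linear sums are immediate: $\sum_\Pi N_\Pi\le N$ and $\sum_\Pi n_\Pi\le n$. For the characteristic term, $n_\Pi\le s$ gives $\sum_\Pi N_\Pi n_\Pi/p\le sN/p$. In characteristic zero this term does not appear.

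The main term needs H\"older's inequality with exponents $3/2$ and $3$:
\[
 \sum_\Pi N_\Pi^{2/3}n_\Pi^{2/3}\le\Bigl(\sum_\Pi N_\Pi\Bigr)^{2/3}\Bigl(\sum_\Pi n_\Pi^{2}\Bigr)^{1/3}.
\]
Since $n_\Pi\le s$, we have $\sum_\Pi n_\Pi^2\le s\sum_\Pi n_\Pi\le sn$. The main term is therefore at most $N^{2/3}(sn)^{1/3}$, and the four bounds together give the asserted estimate.

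Only two points need care. First, planes with empty $X_\Pi$ or empty $\mathcal L_\Pi$ contribute no incidences, so they may be dropped before applying the planar lemma. This matters because the additive term $n_\Pi$ could otherwise be summed over planes carrying no points, although even then its total is still bounded by $n$. Second, the hypothesis bounds only $\sum_\Pi|X_\Pi|$, so a single point may lie in several of the sets $X_\Pi$; the argument counts incidences plane by plane and never uses disjointness of the point sets. The only step that takes any thought is choosing the H\"older split so that the factor $s$ enters through $\sum_\Pi n_\Pi^2\le sn$.
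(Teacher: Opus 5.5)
Your proposal is correct and follows the paper's proof essentially verbatim: apply Lemma~\ref{thm:planar-incidences} in each plane, bound the characteristic-$p$ terms by $sN/p$ using $|\mathcal L_\Pi|\le s$, and control the main term by H\"older with exponents $3/2$ and $3$ together with $\sum_\Pi|\mathcal L_\Pi|^2\le sn$. Your remarks about dropping planes with no points or lines and about never needing the sets $X_\Pi$ to be disjoint are accurate side comments.
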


\begin{proof}
Apply Lemma~\ref{thm:planar-incidences} in each plane.
In positive characteristic, the terms involving $p$
sum to at most $sN/p$. H\"older's inequality gives
\[
 \sum_\Pi |X_\Pi|^{2/3}|\mathcal L_\Pi|^{2/3}
 \le
 \left(\sum_\Pi|X_\Pi|\right)^{2/3}
 \left(\sum_\Pi|\mathcal L_\Pi|^2\right)^{1/3}
 \le N^{2/3}(sn)^{1/3}.
\]
The linear terms sum to at most $N+n$.
\end{proof}
}

\section{Isolated points in an ascending chain of ideals}
\label{sec:nested-ideals}

In this section, we bound the union of the isolated points of the zero sets associated with an ascending chain of ideals. Under the hypotheses below, the bound is independent of the length of the chain. In Section~\ref{sec:removal-by-order}, we will construct such a chain to control the total number of exceptional lines arising from successive applications of the derivative argument in Section~\ref{sec:global-rank-two}.

The proposition is stated for projective varieties so that the degree
bound is expressed by an intersection number. An isolated point always
means an isolated point of the underlying closed set. Embedded
associated points are not included.

{For a single ideal, the isolated-point estimate is a form
of the refined B\'ezout bound; see \cite[Chapter~12]{Fulton}.
The following proposition gives the same bound for the
union over an ascending chain.}

\begin{proposition}
\label{prop:nested-base-budget}
Let $Y$ be an integral projective variety over
$\overline{\mathbb{F}}$, and let $\mathscr L$ be an ample line bundle on
${Y}$. Let $\mathscr I_0\subseteq\mathscr I_1\subseteq\cdots
 \subseteq\mathscr I_r$
be a finite ascending chain of coherent ideal sheaves. Suppose that
$\mathscr I_j\otimes \mathscr L$ is generated by global sections for every $j$.
Then the union of the isolated points of the closed sets
$V(\mathscr I_j)$ has cardinality at most
\[
 c_1({\mathscr L})^{\mathop{\dim} {Y}}[{Y}].
\]
In particular, if ${Y}=\mathbb P^2\times\mathbb P^2$ and
$\mathscr L=\mathcal O(a,b)$ with $a,b>0$, the bound is $6a^2b^2$.
It is independent of the length of the chain.
\end{proposition}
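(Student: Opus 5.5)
The plan is to prove the general bound by a degree-controlled cutting procedure that uses \emph{different} sections on different components. The statement for $\mathbb P^2\times\mathbb P^2$ then follows from the Chow ring computation
\[
 (aH_1+bH_2)^4=\binom42a^2b^2H_1^2H_2^2,
\]
which has degree $6a^2b^2$. Put $n=\dim Y$ and $\deg_{\mathscr L}W=c_1(\mathscr L)^{\dim W}[W]$ for a subvariety $W$. Since $\mathscr L$ is ample, every subvariety has degree at least $1$.

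The procedure maintains, at stage $k=0,1,\dots,n$, a finite collection $\mathcal C_k$ of irreducible closed subvarieties, with multiplicities. Each member has dimension $n-k$, except that some members are frozen at larger dimension. The weighted sum of the degrees is at most $c_1(\mathscr L)^n[Y]$. Every isolated point of every $V(\mathscr I_j)$ lies on some non-frozen member. Start with $\mathcal C_0=\{Y\}$.

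To pass from stage $k$ to stage $k+1$, take a non-frozen member $W$ of dimension $n-k>0$ and let $j(W)$ be the least $j$ with $W\not\subseteq V(\mathscr I_j)$. If no such $j$ exists, then $W\subseteq V(\mathscr I_r)$, and we freeze $W$. Otherwise, global generation of $\mathscr I_{j(W)}\otimes\mathscr L$ gives a section $s_W$ whose divisor $D_W$ does not contain $W$. We replace $W$ by the components of $W\cap D_W$, with their intersection multiplicities. By the projection/intersection formula (Fulton, Chapter~2), the weighted degree of these components equals
\[
 c_1(\mathscr L)^{n-k-1}\bigl(c_1(\mathscr L)\cap[W]\bigr)=\deg_{\mathscr L}W .
\]
So the degree budget does not grow. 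Each member is cut separately, so no global proper-intersection hypothesis is needed. Only the nonnegativity of the multiplicities is used.

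The key check is that isolated points survive. Let $x$ be an isolated point of $V(\mathscr I_i)$ lying on a member $W$ of positive dimension.
\begin{itemize}
\item If $i<j(W)$, then $W\subseteq V(\mathscr I_i)$ is a positive-dimensional irreducible set through $x$. This contradicts isolatedness. Hence $i\ge j(W)$.
\item The chain is ascending, so $V(\mathscr I_i)\subseteq V(\mathscr I_{j(W)})$. The section $s_W$ lies in $\mathscr I_{j(W)}\otimes\mathscr L$, so it vanishes at $x$. Thus $x\in W\cap D_W$, and $x$ lies on some new component.
\item Frozen members are contained in $V(\mathscr I_r)\subseteq V(\mathscr I_i)$ and have positive dimension. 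The same argument shows that they contain no isolated point of any $V(\mathscr I_i)$, so no tracked point is ever lost to a frozen member.
\end{itemize}

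At stage $n$, every tracked point is a zero-dimensional member of degree at least $1$. Distinct points are distinct members. Hence the union of the isolated points has cardinality at most the total degree budget $c_1(\mathscr L)^n[Y]$.

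The main obstacle I expect is the rigorous bookkeeping of multiplicities in the cutting step. One must verify that cutting an integral $W$ by a Cartier divisor not containing it yields an effective cycle of the correct degree, and that frozen and discarded components only decrease the count. I would cite Fulton, Chapter~2 (intersecting with Cartier divisors) and the refined B\'ezout discussion in Chapter~12 for these facts. The independence from the chain length $r$ is automatic: the chain only affects which sections are used, never the degree budget.
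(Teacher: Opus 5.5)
Your proposal is correct and is essentially the paper's proof. The paper also cuts each integral subvariety separately, using a section of $\mathscr I_j\otimes\mathscr L$ for the first $j$ with $V\not\subseteq V(\mathscr I_j)$. It then follows the isolated points through the resulting tree of components and bounds the leaves by the preserved $\mathscr L$-degree. The paper's stage-index invariant records what your ``least $j(W)$'' gives directly, since the closed sets $V(\mathscr I_j)$ are decreasing.
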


\begin{proof}
Let $Z_j$ denote the underlying closed set of $V(\mathscr I_j)$.
These closed sets form a descending
chain. We count the points that are isolated in at least one $Z_j$.

We construct a finite tree whose nodes are integral subvarieties of
$Y$. Each node also has a stage index. The root is $Y$, with stage index zero. The construction maintains the following invariant: a node $V$ with stage index $j_0$ is contained in $Z_i$ for every
$i<j_0$. This invariant allows us to count points isolated before as well as after the stage attached to a node.

Suppose that $V$ is a positive-dimensional node with stage index
$j_0$. Let $j\ge j_0$ be the first index such that $V\not\subseteq Z_j$.
If no such index exists, the invariant shows that $V\subseteq Z_i$
for every $i$. No point of $V$ can then be isolated in any $Z_i$,
and this branch can be discarded.

Otherwise, $V\subseteq Z_i$ for every $i<j$. Hence, a point
$x\in V$ under consideration cannot be isolated at any stage before
$j$. If it is isolated in $Z_{j_1}$, then $j_1 \ge j$ and $x\in Z_{j_1}\subseteq Z_j$.
Thus, every point under consideration on $V$ belongs to $V\cap Z_j$.

Choose a point of $V\setminus Z_j$. Global generation of
$\mathscr I_j\otimes {\mathscr L}$ supplies a section whose image in
$H^0(Y, \mathscr L)$ is nonzero at that point. Its restriction to the integral
variety $V$ is a nonzero section of ${\mathscr L}|_V$. Its zero scheme is
therefore an effective Cartier divisor on $V$, and its support
contains $V\cap Z_j$, hence every {point under consideration} on $V$.
Make the integral components of this divisor the children of $V$, each with
stage index $j$. Every child lies in $V$ and is consequently
contained in all $Z_i$ with $i<j$, so the invariant is preserved.
Further cuts at the same stage are allowed.

Stop a branch when it reaches dimension zero. Dimension decreases
at every cut, so every branch has at most $\mathop{\dim} {Y}$ cuts. The preceding
argument shows that each {point under consideration} follows at least
one branch to a zero-dimensional leaf.

It remains to bound the number of leaves. For an integral
variety $V$, write $\deg_{{\mathscr L}}(V)=c_1({\mathscr L})^{\mathop{\dim} {V}}[V]$.
If the chosen divisor on $V$ has integral components $V_\alpha$
with positive multiplicities $m_\alpha$, then
\[
 \sum_\alpha m_\alpha\deg_{{\mathscr L}}(V_\alpha)=\deg_{{\mathscr L}}(V).
\]
In particular, the sum of the unweighted degrees of its children
does not exceed $\deg_{{\mathscr L}}(V)$. Induction down the tree bounds the
sum of the degrees of all {zero-dimensional} leaves by $\deg_{{\mathscr L}}({Y})$.
Over an algebraically closed field{,} each point has degree one. A point
{may occur} in {several leaves. Thus, the number of distinct points under
consideration} is {at most} $\mathop{\deg}_{{\mathscr L}}\mathopen{(}{Y}\mathclose{)}$.

Finally, if $H_{1}$ and ${H}_{2}$ are the hyperplane classes on the two
factors of $\mathbb P^2\times\mathbb P^2$, then
\[
 (aH_{1}+b{H}_{2})^4[\mathbb P^2\times\mathbb P^2]=6a^2b^2.
\]
{This completes the proof.}
\end{proof}

\section{Removal by order of vanishing}
\label{sec:removal-by-order}

We now combine the derivative argument of
Section~\ref{sec:global-rank-two} with the bound for ascending ideals
from Section~\ref{sec:nested-ideals}. We keep the original polynomial
fixed and successively remove the lines on which its coefficient
order of vanishing is smallest. At each stage, suitable derivatives
vanish on every remaining lift and belong to the {space of all
polynomials satisfying the original degree bounds and these} vanishing
{conditions}.
{Lemma~\ref{lem:radial-reconstruction} recovers the radial
vanishing along direction components needed for the
coefficient rank argument.} The ascending ideals then
allow us to bound all exceptional lines together.
The resulting theorem is the main algebraic ingredient in the proof
of Theorem~\ref{thm:main}.

We retain the coordinates and the alpha and beta planes of
Section~\ref{sec:global-rank-two}. A nonvertical line is written as
\[
 \ell_\lambda(t)=(t,at+b,ct+f),
 \qquad \lambda=(a,b,c,f),
\]
and, for a polynomial $R\mathrel \in \overline{\mathbb{F}}[x,y,z,a,c]$, we put
\[
 Q_R(t,\lambda)=R(t,at+b,ct+f,a,c),
 \qquad
 \mathcal D=\partial_x+a\partial_y+c\partial_z.
\]
The separate degree bounds $(d,e)$ mean total degree at most $d$
in $(x,y,z)$ and at most $e$ in $(a,c)$.

{
\begin{lemma}\label{lem:radial-reconstruction}
Let $d,e,m_1$ be positive integers, and let $\mathcal L'$
be a finite set of distinct nonvertical lines over
$\overline{\mathbb F}$. Let $u\in\overline{\mathbb F}^{3}$,
and let $C$ be an irreducible curve of degree $h$ in the
direction plane. Suppose that more than $h(e+m_1)$ lines
of $\mathcal L'$ pass through $u$ with directions on $C$.
Then every polynomial $R$ with separate degree bounds
$(d,e)$ that vanishes on all lifts of $\mathcal L'$ satisfies
\[
 R\bigl(u+\tau(1,a,c),a,c\bigr)
 \in \tau^{m_1}\overline{\mathbb F}[C][\tau].
\]
\end{lemma}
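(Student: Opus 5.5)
Expand $R(u+\tau(1,a,c),a,c)=\sum_{j\ge0}R_{u,j}(a,c)\tau^j$, where $R_{u,j}\in\overline{\mathbb F}[a,c]$ has degree at most $e+j$. This is the same bookkeeping as in the proof of Proposition~\ref{prop:direction-curve}. The claim is that $R_{u,j}$ vanishes identically on $C$ for every $j<m_1$, i.e.\ its image in $\overline{\mathbb F}[C]$ is zero. Then the series lies in $\tau^{m_1}\overline{\mathbb F}[C][\tau]$.

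First I will translate the vanishing on lifts into vanishing of the coefficients at the directions. Take a line $\ell\in\mathcal L'$ through $u$ with direction $(a_0,c_0)\in C$. Its lift is $t\mapsto(u+(t-t_0)(1,a_0,c_0),a_0,c_0)$, where $t_0$ is the first coordinate of $u$. Since $R$ vanishes on this lift, $\sum_j R_{u,j}(a_0,c_0)\tau^j=0$ identically in $\tau=t-t_0$. Hence $R_{u,j}(a_0,c_0)=0$ for all $j$. The lines are distinct and all pass through $u$, so their directions are distinct. Thus each $R_{u,j}$ vanishes at more than $h(e+m_1)$ distinct points of $C$.

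Next I will apply B\'ezout for plane curves, as recorded in Section~\ref{sec:preliminaries}. Take the projective closures of $C$ and of the curve $\{R_{u,j}=0\}$. The latter has degree at most $e+j\le e+m_1-1<e+m_1$ for $j<m_1$. If $R_{u,j}$ did not vanish on $C$, the two curves would have no common component, because $C$ is irreducible. Their intersection would then have at most $h(e+j)<h(e+m_1)$ points, contradicting the previous step. So $R_{u,j}$ lies in the ideal of $C$, i.e.\ it is zero in $\overline{\mathbb F}[C]$, for every $j<m_1$. This gives the stated membership.

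The only point requiring care is the degree count $\deg R_{u,j}\le e+j$. Each monomial in the point variables of degree $k$ contributes through $(\tau,\tau a,\tau c)$ a factor $\tau^{j}$ times a polynomial of degree at most $j\le k$ in $(a,c)$. Combined with the direction degree at most $e$, this gives $\deg R_{u,j}\le e+j$. I would also check that the zero polynomial case and a constant nonzero $R_{u,j}$ are both handled by the same argument. A nonzero constant cannot vanish at the given directions, and there is at least one such direction since $h(e+m_1)\ge 1$. I expect no real obstacle; the statement follows directly from this B\'ezout count.
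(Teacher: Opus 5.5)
Your proposal is correct and is the paper's proof. You expand in $\tau$, note that each $R_{u,j}$ has degree at most $e+j$ and vanishes at the more than $h(e+m_1)$ distinct directions on $C$, and apply B\'ezout to conclude that $R_{u,j}$ vanishes on $C$ for $j<m_1$; your added checks (distinct directions through $u$, the degree count, the constant case) fill in details the paper leaves implicit.
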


\begin{proof}
Write
\[
 R\bigl(u+\tau(1,a,c),a,c\bigr)
 =\sum_j R_{u,j}(a,c)\tau^j.
\]
The coefficient $R_{u,j}$ has degree at most $e+j$ and
vanishes at the distinct directions under consideration.
For $j<m_1$, their number exceeds $h(e+j)$.
B\'ezout's theorem therefore implies that $R_{u,j}$
vanishes identically on $C$, proving the assertion.
\end{proof}
}

\begin{theorem}
\label{thm:global-minimum-order-peeling}
Let $\mathbb{F}$ be a field of characteristic zero or characteristic
$p>2$. Let $\mathcal L$ be a set of $n$ distinct nonvertical lines over
$\overline{\mathbb{F}}$, let $X \subset \overline{\mathbb{F}}^{3}$ be finite,
and fix a set of selected incidences between $X$ and $\mathcal L$.
Let $d,e,m_0,m_1$ be positive integers with $m_1>13$.
In positive characteristic, assume also that $d+e<p$.
Suppose that the following conditions hold.
\begin{enumerate}
\item There is a fixed nonzero polynomial $P$ with separate degree
bounds $(d,e)$ that vanishes on the lift of every selected
line and has universal radial order at least $m_0$ at every point
of $X$.

\item At each point of $X$ there is a fixed collection of reduced
irreducible direction components, of total degree at most $e$.
Every selected direction lies on one of these components and is
smooth on that component.
\end{enumerate}
Then there is an exceptional family $\mathcal E\subseteq\mathcal L$
with
\[
 |\mathcal E|\le96d^2(e+1)^2.
\]
After omitting all incidences on $\mathcal E$, one can discard at
most
\[
 O\!\left(
 n\left(1+\frac d{m_0}+\frac d{m_1-13}\right)
 +e(e+m_1)|X|
 \right)
\]
further selected incidences, with an absolute implied constant. Call the incidences left after these omissions and discards
\emph{retained}. Each retained incidence satisfies one of the following:
\begin{enumerate}
\item its direction is a smooth flex of an original nonlinear
direction component;
\item its direction belongs to an original linear direction
component, and the pencil determined by that component and the
point lies in a plane in the original three-dimensional space
assigned to the incident line.
\end{enumerate}
Each line is assigned at most one such plane. At any point
there are at most $e$ different assigned planes among retained incidences of the second type. The original direction components are used throughout, so the bounds for flexes and for these planes do not
acquire a factor from the number of stages.
\end{theorem}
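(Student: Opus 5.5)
We keep $P$ fixed throughout and peel off lines in increasing order of the vanishing order of the coefficient polynomials $[t^j]Q_P$.

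\emph{Initial reductions.} First discard, on each line, the incidences allowed by Lemma~\ref{lem:transport}. Its bound $1+(2d-2)/(m_0-1)$ per line accounts for the term $n(1+d/m_0)$. After this, every line carrying many incidences has all first derivatives of $P$ vanishing on its lift. Next discard the incidences at points $u$ where some direction component $C$ of degree $h$ carries at most $h(e+m_1)$ selected lines. Summing over the components at $u$ gives at most $e(e+m_1)$ per point, hence the term $e(e+m_1)|X|$. At the remaining incidences, Lemma~\ref{lem:radial-reconstruction} applies to every space $\mathcal S$ of polynomials of separate degree $(d,e)$ vanishing on the current lifts. It yields radial vanishing of order $m_1$ along each specified component, which is the hypothesis of Lemma~\ref{thm:global-rank-two}. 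As lines are removed, the supply of lines through a point only shrinks. To avoid recomputing this supply, I would apply the reconstruction to the original components and the original line counts, and accept the loss only once.

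\emph{Ascending chain.} Set $\mathcal L_0=\mathcal L$ and let $\mathcal S_j$ be the space of all polynomials of degree $(d,e)$ vanishing on the lifts of $\mathcal L_j$. As lines are removed, $\mathcal S_0\subseteq\mathcal S_1\subseteq\cdots$. Let $\mathscr I_j$ be the ideal on $\mathbb P^2\times\mathbb P^2$, after a suitable compactification of the line-coordinate space, generated by all coefficients $[t^i]Q_R$ and all $2\times2$ minors of gradient pairs, for $R\in\mathcal S_j+\mathcal D\mathcal S_j$, together with the beta-minor equations \eqref{eq:beta-minors}. These ideals ascend, and they are generated in bidegree $(O(d),O(e+1))$. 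By Proposition~\ref{prop:nested-base-budget}, the union of the isolated points of the zero sets $V(\mathscr I_j)$ has size at most $6a^2b^2$. Choosing the twist carefully, for instance with $a\le 2d$ and $b\le 2(e+1)$, gives $96d^2(e+1)^2$.

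At stage $j$, apply Lemma~\ref{thm:global-rank-two} to $\mathcal S_j$. This discards $O(d/(m_1-13))$ incidences per line, and the discards accumulate to $O(nd/(m_1-13))$ because each line is processed once. Every line still carrying at least three selected points then has coefficient rank at most two, with a beta tangent plane when the rank equals two. The lines of minimal order whose line coordinates are isolated points of $V(\mathscr I_j)$ are placed in $\mathcal E$, and then removed. The other lines of minimal order lie on positive-dimensional components. There the beta locus contains a reduced curve germ $\Gamma$, and the radial conditions provide the four formal curve expansions required by Lemma~\ref{lem:local-beta-geometry}; lines with fewer than four remaining points are absorbed into the $O(n)$ discard. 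That lemma then forces every direction curve through a retained incidence to have contact of order at least three with its tangent line. Transported back to the direction plane, this says either that the direction is a smooth flex of an original nonlinear component, or that the component is linear and its pencil at $u$ lies in the plane determined by the beta tangent plane. This plane is assigned to the line, and at most one plane is assigned per line. At a point $u$, distinct assigned planes correspond to distinct linear components, so there are at most $e$ of them.

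\emph{Main obstacle.} The hardest step is justifying that a non-isolated line actually supplies the reduced curve $\Gamma$ in the beta locus, and that the derivatives used at later stages still satisfy the radial hypotheses. I would handle this by working with lines of minimal order, so that the relevant derivative is the first one that does not vanish identically and therefore lies in $\mathcal S_j$, and by taking $\Gamma$ to be a component through $\lambda$ of the positive-dimensional part of $V(\mathscr I_j)$. Including the beta minors in the ideal is what makes that component automatically lie in the beta locus.
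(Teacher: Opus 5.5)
Your overall architecture matches the paper's: $P$ is kept fixed, the lines of minimal coefficient order are peeled off, the full vanishing spaces $\mathcal S_j$ ascend, isolated points are bounded once by Proposition~\ref{prop:nested-base-budget}, and Lemma~\ref{lem:local-beta-geometry} is applied at nonisolated points. But one step is missing: you never prove that a processed line has coefficient rank \emph{exactly} two. Lemma~\ref{thm:global-rank-two} gives only rank at most two. At a line of rank zero or one there are no $G_1,G_2$ with independent gradients, so there is no smooth surface $\Sigma$ and no beta tangent plane to assign, and Lemma~\ref{lem:local-beta-geometry} does not apply. Yet such a line is a zero of your ideal, since all minors vanish there, and it need not be isolated; your argument then has nowhere to put it. The paper excludes rank zero as follows. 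The $(\nu-1)$st line-coordinate derivatives of $P$ are realized by $\partial_y,\partial_z,\partial_a+x\partial_y,\partial_c+x\partial_z$, which preserve the bounds $(d,e)$. They vanish on all current lifts, so they lie in $\mathcal S_i$, and they have nonzero gradient at lines of order exactly $\nu$ because $\nu\le d+e<p$. Rank one is excluded by showing that the leading form would be $H_\nu=\psi(t)\Lambda(\boldsymbol{\eta})^\nu$ with $\deg\psi\le d$. The radial order $m_0$ of $P$ then forces $(t-t_0)^{m_0}\mid\psi$ at every selected point except the at most one whose alpha plane lies in $\ker\Lambda$. Such a line therefore carries at most $1+d/m_0$ points, contradicting a maintained threshold $T_{\mathrm{rem}}>1+d/m_0$. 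This is where $m_0$ and the term $nd/m_0$ genuinely enter. Your use of Lemma~\ref{lem:transport} only yields $\nu\ge2$ and does not replace this argument.

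Second, your ideal does not give $96d^2(e+1)^2$. The coefficient $[t^i]Q_R$ has bidegree up to $(d-i,e+i)$ in $((b,f),(a,c))$. A single twist generating all of them must therefore be at least $\mathcal O(d,d+e)$, and $\mathcal O(2d,2(d+e))$ after forming minors, which gives only $O(d^2(d+e)^2)$. That is far too large in the application, where $d\gg e$ and one needs $|\mathcal E|<\delta n$. The missing device is evaluation at one generic parameter. By Lemma~\ref{lem:generic-evaluation}, the values and first $t$-derivatives at a fixed generic $t_{\mathrm{ev}}$ span every rank-two coefficient gradient plane, with the derivatives supplied by $\mathcal D\mathcal S_i$ since $Q_{\mathcal DR}=\partial_tQ_R$. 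After translating $x$ so that $t_{\mathrm{ev}}=0$, the generators $R(0,b,f,a,c)$ with $R\in\mathcal S_i+\mathcal D\mathcal S_i$ have bidegree at most $(d,e+1)$. Only the three beta combinations $p_{12},p_{34},p_{14}-p_{23}$ of each pair may be added; including all $2\times2$ minors would remove every rank-two line from the zero set. Finally, Lemma~\ref{lem:radial-reconstruction} cannot be applied with the original line counts. Members of $\mathcal S_j$ vanish only on the current lifts, so the B\'ezout count must use current lines through $u$ with directions on $C$. The paper instead repeats two removals after every stage: it removes any original component whose current labelled incidences drop to at most $h(e+m_1)$, and any line with at most $T_{\mathrm{rem}}$ incidences. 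Each component and each line is removed at most once, so the losses remain $e(e+m_1)|X|$ and $T_{\mathrm{rem}}n$.
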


\begin{proof}
We first describe the successive removals and prove that every line
processed by order of vanishing has coefficient rank two.
We then apply Proposition~\ref{prop:nested-base-budget} to obtain a
single exceptional set.

{
\paragraph{Fixing the evaluation parameter.}
For each subfamily $\mathcal L'\subseteq\mathcal L$, let
$\mathcal S(\mathcal L')$ be the full space of polynomials
with separate degree bounds $(d,e)$ that vanish on its
lifts. For each line in $\mathcal L'$ of coefficient rank
two with respect to this space, apply
Lemma~\ref{lem:generic-evaluation} to
\[
 g_R(t)=(\nabla Q_R)(t,\lambda),
 \qquad R\in\mathcal S(\mathcal L').
\]
These vectors have degree at most $d$. Their values and
first derivatives span the coefficient gradient space
outside a finite set of values of $t$.
There are only finitely many subfamilies and lines, so
choose one $t_{\mathrm{ev}}$ outside all these finite sets.
The derivative values are supplied by
$\mathcal D\mathcal S(\mathcal L')$, since
$Q_{\mathcal DR}=\partial_tQ_R$.

Translate $x$ so that the chosen evaluation value becomes
zero, keeping $y,z$ unchanged. The induced change of line
coordinates is
\[
(a,b,c,f)\longmapsto(a,b+t_{\mathrm{ev}}a,c,f+t_{\mathrm{ev}}c).
\]
Under this identification, the full polynomial spaces,
coefficient orders and ranks, separate degree bounds,
and radial conditions are preserved. Selected incidences
and subsequent choices of discarded incidences are
transported bijectively. We henceforth use these coordinates.
}

\paragraph{Initial removals and the number of discarded incidences.}
If a selected direction belongs to more than one original component,
choose one component smooth at that direction to label the incidence.
Keep these labels throughout the proof; counts on a component refer
to its labelled incidences.
Set
\[
 T_{\mathrm{rem}}=10+\frac{2d}{m_0}+\frac{28d}{m_1-13}.
\]
In particular,
\[
 T_{\mathrm{rem}}>\max\left\{1+\frac d{m_0},
                   4+\frac{14d}{m_1-13}\right\}.
\]
Start with the given selected incidences. Repeatedly remove an
original direction component of degree $h$ when it has at most
$h(e+m_1)$ incidences on the remaining lines, discarding those
incidences. Also remove a line when it has at most $T_{\mathrm{rem}}$ remaining
selected incidences, discarding those incidences. These operations
will also be performed after each subsequent stage.

A component, once removed, is never restored. Since the sum of the
original component degrees at each point is at most $e$, all
component removals together discard at most
$e(e+m_1)|X|$ incidences. Each line is removed for insufficient
incidences at most once, so those removals discard at most $T_{\mathrm{rem}} n$
incidences. These are bounds over the entire procedure.

At a stage when these deletions have stopped and a nonempty line
family remains, denote that family by $\mathcal L_i$ and put
\[
 \mathcal S_i=
 \left\{R:\deg_{x,y,z}R\le d,\ \deg_{a,c}R\le e,\
                  Q_R(t,\lambda)\equiv0
                  \text{ for every }\ell_\lambda\in\mathcal L_i\right\}.
\]
{After these removals, every current line has more than
$T_{\mathrm{rem}}$ current incidences, and every remaining
component of degree $h$ has more than $h(e+m_1)$ labelled
incidences on current lines. Apply
Lemma~\ref{lem:radial-reconstruction} with
$\mathcal L'=\mathcal L_i$, the full family of current lines.
Every $R\in\mathcal S_i$ therefore has radial vanishing
of order at least $m_1$ along every remaining component.
Since $\mathcal L_{i+1}\subseteq\mathcal L_i$, the full
vanishing spaces satisfy
$\mathcal S_i\subseteq\mathcal S_{i+1}$.}

\paragraph{The coefficient order of vanishing.}
For {the coordinates} ${\lambda}$ {of} a selected line, expand the fixed
polynomial $P$ as
\[
 Q_P(t,\lambda+{\boldsymbol{\eta}})
   =\sum_{{\boldsymbol{\alpha}}\in\mathbb N^4}q_{\lambda,{\boldsymbol{\alpha}}}(t){\boldsymbol{\eta}}^{{\boldsymbol{\alpha}}},
 \qquad q_{\lambda,{\boldsymbol{\alpha}}}(t)\in {\overline{\mathbb{F}}}[t].
\]
Define its coefficient order of vanishing at $\lambda$ by
\[
 \nu(\lambda)=
 \min\{|{\boldsymbol{\alpha}}|:q_{\lambda,{\boldsymbol{\alpha}}}(t)\not\equiv0\}.
\]
Thus, vanishing is required identically in the parameter $t$,
rather than only at one value of $t$. The change {from point coordinates to line} coordinates is invertible, so $Q_P$ is nonzero. Its total degree in
the four line {coordinates} is at most $d+e$. Since $P$ vanishes on
each selected lift,
\[
 1\le\nu(\lambda)\le d+e.
\]
{In positive characteristic}{,} ${d}\mathbin{+}{e}\mathrel{<}{p}$ {by hypothesis}{.}
Let $\nu$ be the minimum of $\nu(\lambda)$ over the current family
$\mathcal L_i$.

Differentiation of $Q_R$ in the {line coordinates} $b,f,a,c$
corresponds respectively to the commuting operators
\[
 \partial_y,\qquad \partial_z,\qquad
 \partial_a+x\partial_y,\qquad
 \partial_c+x\partial_z
\]
on $R$. Each operator preserves the separate degree bounds $(d,e)$:
the factor $x$ compensates for the decrease in degree {in} $\mathopen{(}{x}\mathpunct{,}{y}\mathpunct{,}{z}\mathclose{)}$
caused by $\partial_y$ or $\partial_z$. Therefore, all derivatives
of $P$ of coefficient order $\nu-1$ are represented by polynomials
with these same bounds. Every {remaining line} {has coefficient} order at least
$\nu$, so these derivatives vanish on every current lift.
They consequently belong to $\mathcal S_i$.

\paragraph{The lines of minimum order have {coefficient} rank two.}
{Apply Lemma~\ref{thm:global-rank-two} to} the {subfamily of} lines
with $\nu(\lambda)=\nu${, using} $\mathcal S_i$ {as its polynomial
space. Every member of} ${\mathcal S}_{i}$ {vanishes on these lifts, and
the radial identities established above hold along their selected
direction components. Thus, the lemma applies to this subfamily}.
On each {of its lines} it discards at most $\frac{14d}{m_1-13}$
incidences. By the choice of $T_{{\mathrm{rem}}}$, at least four distinct points
remain. The corresponding tangent vectors in the planes $\alpha_u$ imply that the
coefficient rank
\[
\rho_i(\lambda)=\dim_{{\overline{\mathbb{F}}}}\operatorname{span}
\left\{\bigl(\nabla\mathopen{(}[t^j]Q_R\mathclose{)}\bigr)(\lambda): R\in\mathcal S_i,\ j\ge0\right\}
\]
is at most two. {Here the coefficient polynomials and their gradients
are as defined in Section~\ref{sec:global-rank-two}.}

Let $H_\nu(t;{\boldsymbol{\eta}})$ be the first nonzero homogeneous part in
${\boldsymbol{\eta}}$ of $Q_P(t,\lambda+{\boldsymbol{\eta}})${.}
{The factorials of all multiindices of total degree} ${\nu}$ {are nonzero:
this is automatic in characteristic zero, and follows from} ${\nu}\mathrel{<}{p}$
{in positive characteristic. Thus}{,} ${H}_{\nu}$ {has a nonzero partial
derivative of total order} ${\nu}$. {The} gradients of the $(\nu-1)$st
derivatives just constructed {cannot all vanish}, and
$\rho_i(\lambda)\mathrel{>}{0}$.

Suppose that $\rho_i(\lambda)=1$, and let a nonzero constant
covector $\Lambda$ span the coefficient gradient space. Every
{gradient of a} $(\nu-1)$st {partial derivative} of $H_\nu$ {is} then
{a polynomial multiple of} $\Lambda$.
{Make a linear change} {of} ${\boldsymbol{\eta}}$, {independent of} ${t}${, so
that} ${\Lambda}\mathopen{(}{\boldsymbol{\eta}}\mathclose{)}\mathrel{=}{\eta}_{1}${.
For each} ${j}\mathrel{\ge}{2}$ {and each multiindex} ${\boldsymbol{\beta}}$ with
${|}{\boldsymbol{\beta}}{|}\mathrel{=}\nu\mathbin{-}{1}$, {the} {coefficient} {rank-one condition} gives ${\partial}_{{\eta}_{j}}{\partial}_{{\boldsymbol{\eta}}}^{{\boldsymbol{\beta}}}
 {H}_{\nu}\mathrel{=}{0}{.}$

{If a degree-}${\nu}$ {monomial involves} ${\eta}_{j}$ {for some} ${j}\mathrel{\ge}{2}${,
choose} ${\boldsymbol{\beta}}$ {by removing one copy of} ${\eta}_{j}$ {from its
multiindex. The displayed derivative is its coefficient multiplied
by a nonzero factorial. Therefore, every such coefficient vanishes,
and} ${H}_{\nu}\mathrel{=}{\psi}\mathopen{(}{t}\mathclose{)}{\eta}_{1}^{\nu}${. Returning to the original normal
coordinates gives}
\[
 {H}_{\nu}\mathopen{(}{t}\mathpunct{;}{\boldsymbol{\eta}}\mathclose{)}\mathrel{=}{\psi}\mathopen{(}{t}\mathclose{)}{\Lambda}\mathopen{(}{\boldsymbol{\eta}}\mathclose{)}^{\nu}\mathpunct{,}
 \qquad {\psi}\mathrel{\ne}{0}\mathpunct{,}\quad {\psi}\mathrel{\in}{\overline{\mathbb{F}}}\mathopen{[}{t}\mathclose{]}\mathpunct{,}\quad\mathop{\deg}{\psi}\mathrel{\le} {d}{.}
\]
{The degree bound follows because the change of normal coordinates
is independent of} ${t}${.}
At any selected point on this line, with parameter ${t}_{0}$,
the original universal radial condition says that
\[
 Q_P\bigl(t,\lambda+({\varepsilon}_{1},-{t}_{0}{\varepsilon}_{1},{\varepsilon}_{2},-{t}_{0}{\varepsilon}_{2})\bigr)
       \in(t-{t}_{0})^{m_0}{\overline{\mathbb{F}}}[t,{\varepsilon}_{1},{\varepsilon}_{2}].
\]
Extracting the homogeneous part of degree $\nu$ in $({\varepsilon}_{1},{\varepsilon}_{2})$ shows $(t-{t}_{0})^{m_0}\mid{\psi}(t)$ unless the alpha plane
\[
 \alpha_{{t}_{0}}=\{({\varepsilon}_{1},-{t}_{0}{\varepsilon}_{1},{\varepsilon}_{2},-{t}_{0}{\varepsilon}_{2}):{\varepsilon}_{1},{\varepsilon}_{2}\in {\overline{\mathbb{F}}}\}
\]
is contained in $\ker\Lambda$. For distinct ${t}_{0},{t}_{0}'$, the two alpha
planes have zero intersection and together span ${\overline{\mathbb{F}}}^4$. A nonzero
covector can therefore annihilate at most one of them. The number
of currently selected points on a {line of coefficient} rank one is consequently
at most $1+d/m_0$. This contradicts the deletion threshold $T_{{\mathrm{rem}}}$.

Every processed line thus has coefficient rank two. The
rank conclusion of Section~\ref{sec:global-rank-two} also shows
that its common tangent two-plane is a beta plane. {This beta plane determines a plane in the original
three-dimensional space. At a remaining incidence on an
original linear direction component, the first derivative
identities put its tangent vector in the beta plane.
Since the component is a line, its lift to $\alpha_u$ lies
in the corresponding affine beta plane, so its pencil lies
in the determined plane. This conclusion also holds on
processed lines later declared exceptional; the
nonisolated-point analysis below is needed only for
nonlinear components.}

Remove the processed lines from the current family, retaining a
record of the incidences where the derivative identities hold{. Then}
repeat the component and line {removals} described above. The successive full spaces
satisfy $\mathcal S_0\subseteq\mathcal S_1\subseteq\cdots$,
because the families of lines only decrease. Each line is processed
at most once{. Thus}, the total number of incidences removed by {the}
derivative bound is at most
$14nd/(m_1-13)$. The procedure terminates: at every nonempty stage
{there is at least one line of} minimum {coefficient} order, and all
{lines of that order} are removed.

\paragraph{Evaluation spaces.}
{Using the fixed evaluation value $t=0$, put}
\[
 \mathcal V_i=\mathcal S_i+\mathcal D\mathcal S_i,\qquad
 \mathcal A_i=
 \{R(0,b,f,a,c):R\in\mathcal V_i\}.
\]
Every member of $\mathcal A_i$ has separate degrees at most
$(d,e+1)$ in the intercept variables $(b,f)$ and the direction
variables $(a,c)$. At {the coordinates of} a {line} processed at stage $i$, their
pointwise gradients span a two-dimensional conormal plane, namely
the conormal plane of its beta tangent plane.

\paragraph{An ascending chain imposing the beta conditions.}
Write $\zeta=(b,f,a,c)$. For $F,G\in\mathcal A_i$, let
\[
 p_{rs}(F,G)=
 \frac{\partial F}{\partial\zeta_r}
 \frac{\partial G}{\partial\zeta_s}
 -
 \frac{\partial F}{\partial\zeta_s}
 \frac{\partial G}{\partial\zeta_r}.
\]
In this coordinate order, the three beta conditions from
Section~\ref{sec:global-rank-two} are
\[
 p_{12}=p_{34}=p_{14}-p_{23}=0.
\]
Let $K_i$ be the ideal generated by all $F\in\mathcal A_i$ and by
\[
 p_{12}(F,G),\quad p_{34}(F,G),\quad
 p_{14}(F,G)-p_{23}(F,G)
 \qquad(F,G\in\mathcal A_i).
\]
All pairs are included. Thus, $\mathcal S_i\subseteq\mathcal S_{i+1}$
implies $\mathcal A_i\subseteq\mathcal A_{i+1}$ and
$K_i\subseteq K_{i+1}$, independently of any choices of bases.
The generators have separate degrees at most $(2d,2(e+1))$.

{The coordinates of every line} processed at stage $i$ {belong} to $V(K_i)$.
Indeed, every member of $\mathcal V_i$ vanishes on its complete
lift, and all the evaluated gradients lie in its beta
conormal plane. {The} ${2}\mathbin{\times}{2}$ {minors formed from} any {pair} of those gradients
therefore satisfy the three beta conditions.

Bihomogenize every generator to the same bidegree
$(2d,2(e+1))$ on $\mathbb P^2\times\mathbb P^2$. The ideal sheaves
generated by these sections form an ascending chain, and each
becomes globally generated after tensoring with
$\mathcal O(2d,2(e+1))$. Their restrictions to the affine chart
are the original ideals $K_i$. An affine isolated point of their
support remains isolated in the projective support: a
positive-dimensional projective component through an affine point
would have a positive-dimensional intersection with that affine
chart.

Proposition~\ref{prop:nested-base-budget} bounds the union of these
isolated points, over all stages, by
\[
 \bigl[c_1\mathcal O(2d,2(e+1))\bigr]^4
       [\mathbb P^2\times\mathbb P^2]
   =96d^2(e+1)^2.
\]
Declare a processed line exceptional if its {coordinates form an} isolated
{point} in the support of $K_i$ at the stage when that line is processed.
Distinct input lines have distinct {coordinates} at the chosen {value of} ${t}$,
so their exceptional family has the required cardinality.

\paragraph{{Nonisolated points in the space of line coordinates}.}
The preceding bound accounts for all exceptional lines. We now show
that the retained incidences on every other processed line have one
of the two forms stated in the theorem.

Consider a nonexceptional line processed at stage $i$. Choose
${G}_1,{G}_2\in\mathcal A_i$ with independent gradients at its
{coordinates}. Their common zero set is a smooth local surface ${\Sigma}$.
Since {this point} is not isolated in $V(K_i)$, {the} reduced support
{of} ${V}\mathopen{(}{K}_{i}\mathclose{)}$ contains a positive-dimensional curve germ $\Gamma$
through {it}. {Restrict} to {a neighborhood where} the {gradients
of} ${G}_{1},{G}_{2}$ {are independent. Then} $\Gamma$ lies in ${\Sigma}$, and the
beta equations for the pair ${G}_1,{G}_2$ vanish along $\Gamma$.

For each retained direction curve, the first and second derivatives
of $Q_R$ along the curve vanish identically in the parameter $t$
for every $R\in\mathcal S_i$. Differentiating in $t$ gives the same
identities for $R\in\mathcal D\mathcal S_i$. Consequently, the
Taylor expansion of the curve through order two satisfies the equations
${G}_1={G}_2=0$ defining ${\Sigma}$, modulo terms of order three.
The beta equations along the actual curve $\Gamma$ come separately
from the definition of $K_i$.

{An affine} {change} of {point} {coordinates} {that keeps} ${x}$ {fixed sends
this line to} the ${x}${-axis}{,} {so its line coordinates become} {the} origin{.
A further invertible linear change of the transverse coordinates}
${y}\mathpunct{,}{z}$ {makes} its beta tangent plane ${\{}c=f=0{\}}${; these changes
preserve the distinct values of} ${t}$ {at the selected points}. {The induced change translates the pairs $(a,c)$ and $(b,f)$
and then applies the same invertible linear map to both.
It sends alpha planes to alpha planes with the same
parameter $t_0$, and beta planes to beta planes.
In the coordinate order $(b,f,a,c)$, gradients transform
by the inverse transpose of this linear map in both blocks.
Consequently, each of the three expressions in
\eqref{eq:beta-minors} is multiplied by the inverse
determinant of that map, so their common zero locus
is preserved.}
{At least four distinct} {selected points} {remain on this line.
Thus}{,} {the smooth surface} ${\Sigma}${, the} {actual} {reduced curve}
${\Gamma}$ {in its beta locus, and} {the} {retained second-order expansions
satisfy the hypotheses of Lemma~\ref{lem:local-beta-geometry}}{.}
{That lemma applies} even {when} $\Gamma$ is singular. {It shows that}
every retained direction curve has contact of order at least three
with its tangent line. On a nonlinear original component this is a
smooth flex.

\paragraph{The final bounds.}
{Assign the plane determined by its beta tangent plane to
each nonexceptional processed line.}
A line is processed once, so its assignment is made once. At a
fixed point, each remaining incidence of the linear type belongs
to one of the original linear direction components. Such a
component determines a single plane through the point,
and there are at most $e$ components. Hence, at most $e$ assigned
planes occur there.

The nonlinear components are also the fixed original components.
Their smooth-flex bound in Lemma~\ref{lem:small-degree-flexes}
is therefore applied once to
that fixed collection, independently of the number of stages.
Finally, the incidences removed with components or {lines having
at most} ${T}_{{\mathrm{rem}}}$ {selected incidences}, together with those removed by the derivative bound, total at most
\[
 e(e+m_1)|X|+
 n\left(T_{{\mathrm{rem}}}+\frac{14d}{m_1-13}\right),
\]
which is the stated bound. {This completes the proof.}
\end{proof}

\section{Proof of the rich-point estimate}
\label{sec:endpoint-assembly}

We now combine the preceding results to prove
Theorem~\ref{thm:main}, the incidence tool for the distance theorem.
The proof is by induction on the number of lines. Assuming that there
are too many rich points, we construct a polynomial and direction
curves to which Theorem~\ref{thm:global-minimum-order-peeling} applies.
That theorem leaves fewer than a fixed small fraction of the original
lines as exceptions. We estimate their incidences by induction and
all other incidences by the results of the preceding sections.

\begin{proof}[Proof of Theorem~\ref{thm:main}]
{Let} ${\mathbb{F}}$ {be the ground field. We carry out the algebraic
constructions over} ${\overline{\mathbb{F}}}${. An intersection of two
distinct input lines, if it exists over} ${\overline{\mathbb{F}}}${, is
already defined over} ${\mathbb{F}}${. A plane over} ${\overline{\mathbb{F}}}$
{containing two distinct input lines is their span and is also defined
over} ${\mathbb{F}}${. Thus, extending scalars preserves both the rich
points under consideration and the bound on the number of lines in
a plane. Choose generic affine coordinates over} ${\overline{\mathbb{F}}}$
{so that every input line is nonvertical. All incidences and plane
counts are preserved by this change of coordinates.} {Whenever induction is applied to a subfamily in these
coordinates, we apply the induction hypothesis to the
corresponding original lines over $\mathbb F$.}

We prove the assertion by strong induction on the number $n$ of lines,
with one absolute constant $A$ to be chosen below. The assertion is
immediate for $n\le2$, so assume $n\ge3$. When passing to a subfamily,
we retain $s$ as an upper bound for the number of lines in a plane.
In particular, if the subfamily has $n'\le\delta n$ lines, where
$0<\delta<1$, then
\begin{equation}
 (n')^{3/2}+n's\le\delta(n^{3/2}+ns).
 \label{eq:main-inherited-bound}
\end{equation}
Set
\begin{equation}
 S=\max(s,\sqrt n),\qquad B=S/\sqrt n\ge1,
 \qquad nS\le n^{3/2}+ns\le2nS.
 \label{eq:main-plane-scale}
\end{equation}
All implicit constants in this proof are absolute.

Lemma~\ref{lem:large-richness} gives $R_k\le2n/k$ when
$k\ge\sqrt{2n}$. For a parameter ${\kappa}$ to be specified, the range
$k\le{\kappa} B^2$ follows from Lemma~\ref{lem:weak}: indeed,
\[
 \frac{n^{3/2}}{k^{3/2}}
 \le\sqrt{\kappa}\,\frac{nS}{k^2},
\]
so the required constant in this range is at most
$C_{{\mathrm{weak}}}(1+\sqrt{\kappa})$ for an absolute $C_{{\mathrm{weak}}}$. It remains to consider
\begin{equation}
 {\kappa} B^2<k<\sqrt{2n}.
 \label{eq:main-remaining-range}
\end{equation}

The rest of the proof concerns this intermediate range of $k$.
We first choose the constants and polynomial degrees. Their purpose is
to make the exceptional line family small enough for induction, while
keeping every other incidence count below a fixed fraction of the selected incidences.
{Fix an absolute constant $C_*\ge300$ sufficiently large
for the estimates below. We choose constants in the order
$c_T,c_0,\theta,\delta,B_0,\eta,L,A$.
Take $c_T=1/100$ and $c_0=1/4$, and choose
$0<\theta<1$ and $0<\delta<1/2$ so that
\begin{equation}\label{eq:main-exception-fraction}
 \theta+\delta\theta^{-2}<c_0/4.
\end{equation}
Take $B_0=4$, and choose $0<\eta<1$ so that
\begin{equation}\label{eq:main-eta-choice}
 400B_0^2\eta^{8/3}<\delta.
\end{equation}
Choose $L\ge3/\theta$ with $\eta\sqrt L\ge C_*$.
Finally, choose $A\ge2$ sufficiently large that
\[
 A\ge C_{\mathrm{weak}}(1+\sqrt L A^{1/4})
\]
and
\[
 C_*\left(A^{-1}+B_0\eta^{-2/3}A^{-1/2}\right)
 <
 \min\left\{\frac12-2c_T-c_0,\,
             \frac{c_0}{2},\,\frac{c_T}{20}\right\}.
\]
Set $\kappa=LA^{1/2}$.
The choices of $\theta,\delta$ control the exceptional
incidences, and the choice of $\eta$ makes the exceptional
line family smaller than $\delta n$.
The choice of $B_0$ ensures both interpolation inequalities.
The conditions on $L$ ensure $e_*\ge200$ and $\theta k\ge3$.
The final condition on $A$ controls the singular-direction
loss, makes \eqref{eq:main-nonexceptional-incidences}
less than $c_0kM/2$, and ensures $T>20(1+d/e)$.
The estimates below verify these assertions.
All choices are compatible, since $A$ is chosen last.}

Suppose for a contradiction that there are at least
\begin{equation}
 M=\left\lceil A\left(\frac{n^{3/2}+ns}{k^2}+\frac nk\right)
   \right\rceil
 \label{eq:main-M}
\end{equation}
$k$-rich points. Choose exactly $M$ of them, and select exactly $k$
incidences at each chosen point. By
\eqref{eq:main-plane-scale} and \eqref{eq:main-remaining-range},
$M\asymp AnS/k^2$: the term $n/k$ is at most
$\sqrt2\,nS/k^2$, and the ceiling is absorbed by an absolute factor.
Define integers
\[
 e=\left\lfloor\eta n^{3/8}M^{-1/4}\right\rfloor,
 \qquad d=\left\lceil B_0(Me)^{1/3}\right\rceil,
 \qquad m_0=2e,\qquad m_1=e,
\]
and put $T=c_T Mk/n$.

For the unrounded values
$e_*=\eta n^{3/8}M^{-1/4}$ and $d_*=B_0(Me_*)^{1/3}$, we have
\[
d_*e_*=B_0\eta^{4/3}\sqrt n,\qquad
\frac{Mk/n}{d_*/e_*} =B_0^{-1}\eta^{2/3}\frac{k\sqrt M}{n^{3/4}}  \gg B_0^{-1}\eta^{2/3}\sqrt A\sqrt B.
\]
Moreover, \eqref{eq:main-remaining-range} and
$M\asymp ABn^{3/2}/k^2$ give
\[
 e\gg\eta\sqrt L\,B^{3/4},\qquad
 d\ll B_0\eta^{1/3}L^{-1/2}\sqrt n\,B^{-3/4}.
\]
Here and below $L$ is large enough that $e_*\ge200$, so taking the
floor or ceiling changes these comparisons only by absolute factors.
In particular{,} $e\ge100$, and
\begin{equation}
 \begin{gathered}
 de\le2B_0\eta^{4/3}\sqrt n,\qquad
 \frac{Mk/n}{d/e}\gg B_0^{-1}\eta^{2/3}\sqrt A\sqrt B,\\
 \frac{e^2}{k}\ll\frac{\eta^2}{\sqrt A\sqrt B},
 \qquad \frac{Mk}{n}\ge A.
 \end{gathered}
 \label{eq:main-parameter-estimates}
\end{equation}
The first inequality follows from $e\le e_*$ and
$d\le2B_0(Me)^{1/3}$.

{Since $d,e\ge2$, \eqref{eq:main-parameter-estimates}
and \eqref{eq:main-eta-choice} give
\[
 d+e\le de\le2B_0\eta^{4/3}\sqrt n<\sqrt n.
\]
Thus $d+e<p$ in positive characteristic.
Also, $m_1=e\ge100>13$.}
By \eqref{eq:main-parameter-estimates}, increasing $A$ also makes
$T$ exceed every fixed multiple of $1+d/e$ required below, including
the thresholds in Lemma~\ref{lem:transport} and
Proposition~\ref{prop:direction-curve}.

We now construct the polynomial used throughout the proof. The
preceding estimates ensure that interpolation is possible and{, in
positive characteristic,} that its degree {in} the {point variables is less than} ${p}$.
The vector space of polynomials of degree at most $d$ {in the point variables} and
direction degree at most $e$ has dimension
$\binom{d+3}{3}\binom{e+2}{2}$. Universal radial order $m_0=2e$
at a point imposes at most
\[
 \sum_{j=0}^{2e-1}\binom{e+j+2}{2}=O(e^3)
\]
linear conditions. Since $d^3\ge B_0^3Me$, the choice of $B_0$
therefore gives a fixed nonzero polynomial $P$ with these degree
bounds and universal radial order $m_0$ at all $M$ chosen points.
{The choice $B_0=4$ is sufficient because
\[
 6\sum_{j=0}^{2e-1}\binom{e+j+2}{2}
 <52e\binom{e+2}{2}
\]
for every positive integer $e$.
The second interpolation uses direction degree and radial
order $\lfloor e/4\rfloor$, so the same choice suffices.}

We next choose a direction curve at each point whose incidences will
be retained. Apply Proposition~\ref{prop:direction-curve} with
direction degree {and radial order both equal to} $\lfloor e/4\rfloor$.
Its interpolation inequality follows from the same bound
$d^3\ge B_0^3Me$, and its line threshold is satisfied whenever each
line contains more than $T$ selected points, since
$T\gg1+d/{\lfloor} {e}{/}{4}{\rfloor}$.
The proposition then gives a point whose selected directions lie
on a curve of degree at most $2{\lfloor} {e}{/}{4}{\rfloor}-1\le e/2$.

We apply this observation repeatedly to obtain direction curves at
many of the chosen points. At the same time, we remove lines and points
with too few selected incidences so that the required thresholds
continue to hold.
Starting with the $Mk$ selected incidences, repeatedly remove lines
with at most $T$ remaining selected points and points with fewer than
$k/2$ remaining incidences. If any incidences remain, apply the
proposition, record all the remaining incidences at the resulting
point together with a reduced direction curve containing their
directions, and remove that point. Continue until no points remain.
Each line is removed at most once, and each unrecorded point loses
fewer than $k/2$ incidences when removed. Consequently, at least
$(1/2-c_T)Mk$ incidences are recorded.

{From the recorded incidences, repeatedly remove lines
with at most $T$ selected points.}
Also remove any incidence whose direction is singular on its recorded
curve, repeating these removals as necessary. A reduced plane curve
of degree at most $e$ has $O(e^2)$ singular points, so these singular
directions account for $O(e^2M)$ incidences in total. The remaining
number of incidences is therefore at least
\begin{equation}
\left(\frac12-2c_T\right)Mk-O(e^2M)\ge c_0Mk
 \label{eq:main-retained-incidences}
\end{equation}
by \eqref{eq:main-parameter-estimates} and a sufficiently large
choice of $A$. Every remaining line has more than $T$ selected points,
every remaining point has at most $k$ selected incidences, and all
selected directions are smooth points of the recorded curves.

The fixed polynomial $P$ vanishes identically on each remaining
lift: its restriction has degree at most $d$ and more than
$T$ roots of multiplicity at least $m_0$, with $m_0T>d$.
Keep the recorded irreducible direction components fixed {when
applying Theorem~\ref{thm:global-minimum-order-peeling} below}.

Both hypotheses of Theorem~\ref{thm:global-minimum-order-peeling}
are now satisfied. It gives a family $\mathcal E$ of exceptional
lines with
\[
|\mathcal E|\le96d^2(e+1)^2  \le400B_0^2\eta^{8/3}n<\delta n,
\]
where we used $e\ge100$, \eqref{eq:main-parameter-estimates}, and
\eqref{eq:main-eta-choice}. {Since} ${m}_{0}\mathrel{=}{2}{e}${,} ${m}_{1}\mathrel{=}{e}${,} and ${e}\mathrel{\ge}{1}{0}{0}${,
the theorem discards at most} $O(n(1+d/e)+e^2M)$ {further} incidences
{outside} ${\mathcal E}${, including all its line and component removals}.
{Each retained} incidence in a nonlinear direction component is a
smooth flex of that original component. There are $O(e^2M)$ such
incidences {by Lemma~\ref{lem:small-degree-flexes}}.

We have therefore reduced the incidences outside the exceptional
lines to those in linear direction components, up to the errors just
estimated. These remaining incidences lie in planes and can be counted
using the planar incidence theorem.
For incidences in linear direction components, the theorem assigns
each line to one {plane in the original three-dimensional space},
with at most $e$ assigned planes
at each selected point. {Let $\mathcal L_\Pi$ consist of the nonexceptional lines
assigned to $\Pi$ that have at least one retained incidence
in a linear direction component, and let $X_\Pi$ consist
of the selected points of these incidences.
Conclusion~(ii) of
Theorem~\ref{thm:global-minimum-order-peeling}
ensures that every line of $\mathcal L_\Pi$ is contained
in $\Pi$.} Each {selected point} {belongs to at most} ${e}$ {such sets}
${X}_{\Pi}${. All these lines and points lie over}
${\overline{\mathbb{F}}}${, where the planar incidence theorem applies}.
Then
\[
 {|}\mathcal L_\Pi{|}\le s,\qquad \sum_\Pi {|}\mathcal L_\Pi{|}\le n,
 \qquad \sum_\Pi {|}{X}_\Pi{|}\le eM.
\]
{Applying Lemma~\ref{lem:plane-packets} with $N=eM$
over $\overline{\mathbb F}$ gives, in positive characteristic,}
\[
I_{\rm lin}\ll \frac{s}{p}eM+(eM)^{2/3}(sn)^{1/3}+eM+n.
\]
{In characteristic zero the same bound holds without the term}
${s}{e}{M}{/}{p}${.} Indeed, the middle term follows from
$\sum_\Pi {|}\mathcal L_\Pi{|}^2\le sn$ and
$\sum_\Pi {|}{X}_\Pi{|}\le eM$.

Combining all incidences outside $\mathcal E$, including all removals
after \eqref{eq:main-retained-incidences}, gives the upper bound
\begin{equation}
C_{{\mathrm{inc}}}\left[n(1+d/e)+e^2M+(eM)^{2/3}(sn)^{1/3} +\frac{s}{p}eM\right]
 \label{eq:main-nonexceptional-incidences}
\end{equation}
{in positive characteristic,} for an absolute constant
$C_{{\mathrm{inc}}}${.} {In characteristic zero the same bound holds
without its last term.} Each term {that occurs} is arbitrarily small
compared with $kM$ once $A$ is sufficiently large, with all earlier
constants fixed. For the first two terms this follows from
\eqref{eq:main-parameter-estimates}. For the third,
\[
\left(\frac{(eM)^{2/3}(sn)^{1/3}}{kM}\right)^3
 =\frac{e^2sn}{k^3M}
 \ll\frac{\eta^2}{A^{3/2}\sqrt B}.
\]
{In positive characteristic}, $n<p^2/6$ and $k>LA^{1/2}B^2$
give{, for the last term,}
\[
\frac{se}{pk} \ll\frac{\eta A^{-1/4}B^{3/4}}{\sqrt k}
 \le\frac{\eta}{\sqrt L\sqrt A\,B^{1/4}}.
\]
We may therefore choose $A$ so that
\eqref{eq:main-nonexceptional-incidences} is less than $c_0kM/2$.
All incidences removed before \eqref{eq:main-retained-incidences}
have already been accounted for there.

We finally use the induction hypothesis for the exceptional lines.
The estimate $|\mathcal E|<\delta n$ is essential here: it gives the
fixed reduction in the number of lines needed to close the argument.
Since $L\ge3/\theta$,
then \eqref{eq:main-remaining-range}, with $A\ge1$, ensures
$\theta k\ge3$. Points with fewer than $\theta k$ selected
incidences on $\mathcal E$ contribute at most $\theta kM$.
Every other point is $q$-rich for $\mathcal E$, where
$q=\lceil\theta k\rceil$. Strong induction applies because
$|\mathcal E|<\delta n<n$, and a plane still contains at most $s$
lines of $\mathcal E$. Since $q\ge\theta k$ and $\theta<1$,
\eqref{eq:main-inherited-bound} yields
\[
R_q(\mathcal E) \le A\left(\frac{|\mathcal E|^{3/2}+|\mathcal E|s}{q^2}+\frac{|\mathcal E|}{q}\right)
\le\delta\theta^{-2}A \left(\frac{n^{3/2}+ns}{k^2}+\frac nk\right)
 \le\delta\theta^{-2}M.
\]
At most $k$ selected incidences occur at each point. Hence, {their} total
number on $\mathcal E$ {is at most}
\[
 (\theta+\delta\theta^{-2})kM<c_0kM/4,
\]
by \eqref{eq:main-exception-fraction}. Together with
\eqref{eq:main-nonexceptional-incidences}, this contradicts
\eqref{eq:main-retained-incidences}. Thus, there are fewer than the
$M$ points specified in \eqref{eq:main-M}, which implies the asserted
bound{. This} completes the induction.
\end{proof}

\section{Distance estimates and incidence bounds}\label{sec:applications}

We first prove Theorem~\ref{thm:distance}. The Elekes--Sharir
construction expresses the contribution of direct isometries other
than translations through intersections among lines in $\Fp^3$.
Translations are counted separately. We verify the geometric
properties of these lines and then apply the rich-point estimate from
Section~\ref{sec:endpoint-assembly}. For sets of size at most $p$,
we use the full line family and an averaging argument. For larger
sets, we sample the line family. {We then prove a separate
estimate for the split form in Subsection~\ref{app:sec-split-distances}.
}We conclude with the point--line incidence bound in
Corollary~\ref{cor:incidence}.

\subsection{Distances and direct isometries}

Let $p$ be an odd prime, and let $Q$ be an anisotropic binary
quadratic form over $\Fp$. The standard argument as below reduces $Q$ to the form
$Q_\delta(x_1,x_2)=x_1^2-\delta x_2^2$, where
$\delta\in\Fp^\times$ is a nonsquare. Write
\[
 Q(x_1,x_2)=Ax_1^2+Bx_1x_2+Cx_2^2.
\]
Anisotropy implies $A\ne0$. Completing the square gives
\[
 Q(x)=AQ_\delta(Tx),\qquad
 T(x_1,x_2)=\left(x_1+\frac{B}{2A}x_2,x_2\right),\qquad
 \delta=\frac{B^2-4AC}{4A^2}.
\]
The map $T$ is invertible, and anisotropy implies that $\delta$ is
a nonsquare. Moreover,
\[
 |TE|=|E|,\qquad
 \Delta_Q(E)=A\Delta_{Q_\delta}(TE).
\]
Thus, it suffices to prove Theorem~\ref{thm:distance} for $Q_\delta$, with constants independent of $\delta$.

Throughout {the proof of Theorem~\ref{thm:distance}}, we therefore assume
$Q=Q_\delta$ and use the abbreviations
\[
Q(x)=x_1^2-\delta x_2^2,\qquad  \Delta(E)=\Delta_Q(E).
\]
Define the distance energy by
\[
 \mathfrak E(E)
 =|\{(a,b,c,d)\in E^4:Q(a-b)=Q(c-d)\}|.
\]
Since $\delta$ is a nonsquare, $Q(x)=0$ implies $x=0$.
If $E$ is nonempty, Cauchy--Schwarz gives
\begin{equation}\label{app:eq-distance-energy-cs}
|\Delta(E)|\ge \frac{|E|^4}{\mathfrak E(E)}.
\end{equation}
We first establish the geometric properties of the line family that
represents direct isometries. This is the finite-field form of the
Elekes--Sharir construction introduced in \cite{ES} and used in
\cite{GK,BIP}. We give the algebraic details, including the quadric
bound, since these are needed in positive characteristic.

Let $\mathrm{Id}_2$ denote the $2\times2$ identity matrix, and let
\[
J(x_1,x_2)=(\delta x_2,x_1),
 \qquad J^2=\delta\mathrm{Id}_2.
\]
For $a,b\in E$, define the affine line
\begin{equation}\label{app:eq-distance-es-line}
 \ell_{a,b}
 =\left\{\left(\frac{a+b}{2}
            +\frac{t}{2}J(b-a),t\right):t\in\Fp\right\}
 \subset\Fp^2\times\Fp,
 \qquad
 \mathcal L_E=\{\ell_{a,b}:a,b\in E\}.
\end{equation}
The notation also denotes the corresponding geometric lines over an
algebraic closure.

\begin{lemma}\label{app:lem-distance-line-properties}
The family $\mathcal L_E$ has the following properties.
\begin{enumerate}
\item Its $|E|^2$ lines are distinct.
\item Every affine plane contains at most $|E|$ of its lines.
\item Every quadric surface contains at most $2|E|$ of its lines.
\item Every point of $\Fp^3$ lies on at most $|E|$ of its lines.
\end{enumerate}
The plane and quadric conclusions hold over the algebraic closure.
\end{lemma}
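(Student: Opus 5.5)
The plan is to read each line in $\mathcal L_E$ as a direct isometry. The linear map $I+tJ$ has determinant $1-\delta t^2$. Since $\delta$ is a nonsquare, this determinant is nonzero for $t\in\Fp$, and the point $(x,t)\in\ell_{a,b}$ corresponds to the isometry $y\mapsto x+(I+tJ)(I-tJ)^{-1}(y-x)$ sending $a$ to $b$. Most of the claims should then reduce to short linear algebra in the equation
\[
 \tfrac12(a+b)+\tfrac t2 J(b-a)=x .
\]

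For (i), the point of $\ell_{a,b}$ at $t=0$ is the midpoint $(a+b)/2$. The direction vector with last coordinate $1$ is $(J(b-a)/2,1)$. Since $J$ is invertible, the pair of midpoint and direction recovers $a+b$ and $b-a$, hence $(a,b)$, because $2$ is invertible.

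For (iv), fix $(x,t)$ and $a$, and solve for $b$. The equation becomes $\tfrac12(I+tJ)b=x-\tfrac12(I-tJ)a$, and $\det(I+tJ)=1-\delta t^2\neq0$. So each $a$ admits at most one $b$, and the point lies on at most $|E|$ lines.

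For (ii), all lines are nonvertical, so a horizontal plane $t=\text{const}$ contains none of them. Every other plane has the form $\alpha\cdot x+\beta t=\gamma$ with $\alpha\neq0$. If it contains two input lines, it is defined over $\Fp$, so I may take $\alpha\in\Fp^2$. Containment of $\ell_{a,b}$ means $\alpha\cdot(a+b)=2\gamma$ and $\alpha\cdot J(b-a)=-2\beta$. For fixed $a$ these are two linear equations in $b$ with coefficient rows $\alpha$ and $J^{T}\alpha=(\alpha_2,\delta\alpha_1)$. Their determinant is $\delta\alpha_1^2-\alpha_2^2$, which is nonzero for $0\neq\alpha\in\Fp^2$ because $\delta$ is a nonsquare. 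Hence each $a$ contributes at most one line, giving the bound $|E|$.

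For (iii), I would split by the type of quadric. A reducible quadric is a union of two planes, so (ii) gives at most $2|E|$. For a cone, any two contained lines meet at the vertex. If there are two input lines on it, the vertex is their intersection, hence an $\Fp$-point, and (iv) gives at most $|E|$. The main case is a smooth quadric with rulings $R_1,R_2$. Here I would use the isometry interpretation: $\ell_{a,b}$ meets $\ell_{c,d}$ exactly when some non-translation direct isometry maps $a\mapsto b$ and $c\mapsto d$. In particular this forces $Q(a-c)=Q(b-d)$.

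Every line in $R_1$ meets every line in $R_2$. So if $R_2$ contains input lines $\ell_{c_i,d_i}$, then for each fixed $a$, every $b$ with $\ell_{a,b}\in R_1$ lies on all the anisotropic circles $Q(b-d_i)=Q(a-c_i)$. When there are two or more lines $\ell_{c_i,d_i}$ with distinct $d_i$, two such circles with distinct centres meet in at most two points, since subtracting the equations gives a line. This gives at most two lines of $R_1$ per first index $a$. The same argument applies with the roles of $a$ and $b$ exchanged, and also to $R_2$.

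The main obstacle is sharpening this to the constant $2|E|$ rather than a bound like $4|E|$. I would first try to show that a single ruling contains at most one line with a given first index $a$, or with a given second index. Two lines $\ell_{a,b},\ell_{a,b'}$ in the same ruling would force the isometries $\phi,\psi\in R_2$ with $\phi(a)=b$ and $\psi(a)=b'$ to coexist. Combined with the constraint from three lines of $R_2$, this should give a contradiction. In the degenerate situation where all $d_i$ coincide, or $R_2$ contains at most one input line, I would bound $R_2$ directly by $|E|$ using (iv) and the fact that lines with a common second index are pairwise skew. Once each ruling is shown to contain at most $|E|$ input lines, the total for the quadric is at most $2|E|$.
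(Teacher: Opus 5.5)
The gap is in (iii), in the smooth-quadric case. It is not a matter of sharpening constants: the claims you plan to prove there are false. Your arguments for (i), (ii) and (iv) are correct. For (ii) your route differs from the paper's but is valid: you pass to a rational plane and use that $\delta\alpha_1^2-\alpha_2^2\neq0$.

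\textbf{A counterexample to your ruling claims.} Take $p=7$ and $E=\{(s,0):s\in\{0,1,3,4,6\}\}$, so $|E|=5$.
\begin{itemize}
\item For $a=(s,0)$ and $b=(s',0)$ one has $\ell_{a,b}=\{((s+s')/2,\,t(s'-s)/2,\,t)\}$.
\item This line lies on the smooth quadric $x_1x_2=2t$ exactly when $s'^2-s^2=1$ in $\mathbb F_7$.
\item That holds for the six pairs $(s,s')=(0,1),(0,6),(1,3),(1,4),(6,3),(6,4)$.
\item These six lines lie in six distinct parallel planes $x_1=\mathrm{const}$ and have six distinct directions. So they are pairwise skew, and hence all lie in one ruling.
\end{itemize}
Thus a single ruling can contain more than $|E|$ input lines. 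It can contain two lines with the same first index, $(0,1)$ and $(0,6)$, and two with the same second index, $(1,3)$ and $(6,3)$. The same failure of the first-index claim occurs for every $p$: the smooth quadric $Q(x)=\tfrac c4(1-\delta t^2)$ with $c\neq0$ contains the $p+1$ lines $\ell_{0,b}$ with $Q(b)=c$, all in one ruling.

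\textbf{Where your circle argument stops.} In the example above the opposite ruling contains no input line. That is exactly the case your circle argument cannot reach. Your fallback bounds the sparse ruling $R_2$ by $|E|$, but gives no bound at all on $R_1$. Where the circle argument does apply, it yields only $2|E|$ per ruling, i.e.\ $4|E|$ per quadric. That would still suffice downstream with $c=2$ in Lemma~\ref{thm:background-two-rich}, but it is not the stated bound.

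\textbf{The missing idea: a slice on which $a$ and $b$ separate.} Over $\mathbb F_{p^2}$ choose $\omega^2=\delta$, and use coordinates $u=x_1+\omega x_2$, $v=x_1-\omega x_2$, $\sigma=(1+\omega t)/2$.
\begin{itemize}
\item The line $\ell_{a,b}$ meets the non-rational plane $\sigma=0$ at $(a_+,b_-,0)$, where $a_\pm=a_1\pm\omega a_2$. Both maps $a\mapsto a_\pm$ are injective on $\Fp^2$.
\item Let $F(u,v,\sigma)=0$ be an irreducible quadric. Then $F(u,v,0)\neq0$, since otherwise $\sigma\mid F$.
\item Write $F(u,v,0)=\sum_{j=0}^r f_j(u)v^j$ with $f_r\neq0$ and $\deg f_r\le 2-r$.
\item At most $2-r$ first indices $a$ satisfy $f_r(a_+)=0$, and each contributes at most $|E|$ lines.
\item Every other $a$ contributes at most $r$ lines, because $b_-$ must be a root of $F(a_+,v,0)$.
\end{itemize}
This gives at most $(2-r)|E|+r|E|=2|E|$ lines, uniformly for cones and smooth quadrics, with no ruling analysis needed. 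Reducible quadrics follow from (ii), as you say.

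\textbf{A smaller point.} In your cone case the vertex may lie at infinity, so the cone is an affine cylinder, and (iv) as stated does not apply. There all the lines are parallel, so $b-a$ is fixed, and again at most $|E|$ lines occur.
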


\begin{proof}
The value at $t=0$ determines $a+b$, and the slope determines
$J(b-a)$. Since $J$ is invertible and $p$ is odd, this proves
distinctness.

Choose $\omega\in\mathbb F_{p^2}$ with $\omega^2=\delta$, and use
coordinates
\[
 u=x_1+\omega x_2,\qquad v=x_1-\omega x_2,\qquad
 \sigma=\frac{1+\omega t}{2}.
\]
For a point $a\in\Fp^2$, write $a_+=a_1+\omega a_2$ and
$a_-=a_1-\omega a_2$. Each map $a\mapsto a_+$ and $a\mapsto a_-$
is injective on $\Fp^2$, since $\omega\notin\Fp$.
In the new coordinates, \eqref{app:eq-distance-es-line} becomes
\begin{equation}\label{app:eq-distance-split-line}
u=(1-\sigma)a_++\sigma b_+,\qquad v=\sigma a_-+(1-\sigma)b_-.
\end{equation}
Although $\sigma=0,1$ do not correspond to rational values of $t$,
they are points on the geometric line and can be used to test
containment.

Consider a plane $q_1u+q_2v+q_3\sigma+q_0=0$.
If $q_2\ne0$, evaluation at $\sigma=0$ determines $b_-$ uniquely
for each $a$, so the plane contains at most $|E|$ input lines.
If $q_2=0$ and $q_1\ne0$, evaluation at $\sigma=0$ determines
$a_+$, and evaluation at $\sigma=1$ determines $b_+$.
There is then at most one input line.
If $q_1=q_2=0$, the plane contains none, since $\sigma$ varies
on every input line. This proves the plane bound.

For the quadric bound, a reducible quadric is supported on at most two
planes and therefore contains at most $2|E|$ input lines. Suppose
that its equation $F(u,v,\sigma)$ is irreducible over the algebraic
closure. Then $F(u,v,0)$ is a nonzero polynomial of degree at most
two: otherwise $\sigma$ divides $F$. Write
\[
        F(u,v,0)=\sum_{j=0}^{r} f_j(u)v^j,
        \qquad f_r\ne0.
\]
Here $0\le r\le2$ and $\deg f_r\le2-r$. Thus, at most $2-r$
values of $a\in E$ satisfy $f_r(a_+)=0$; each can supply at most
$|E|$ lines. For every other $a$, the polynomial $F(a_+,v,0)$ has
at most $r$ roots. By \eqref{app:eq-distance-split-line} and injectivity
of $b\mapsto b_-$, such an $a$ supplies at most $r$ lines.
The total is at most $(2-r)|E|+r|E|=2|E|$.

Finally, for $(z,t)\in\Fp^2\times\Fp$, the condition
$(z,t)\in\ell_{a,b}$ is
\begin{equation}\label{app:eq-distance-isometry-equation}
       (\mathrm{Id}_2+tJ)b=2z-(\mathrm{Id}_2-tJ)a.
\end{equation}
The matrix $\mathrm{Id}_2+tJ$ has determinant
$1-\delta t^2\ne0$, because $\delta$ is a nonsquare.
Thus, each $a$ determines at most one $b$, proving the
multiplicity bound. {This completes the proof.}
\end{proof}

The preceding lemma gives the geometric bounds required for the
rich-point theorem. To relate intersections of these lines to equal
distances, we next identify their parameter space with the direct
isometries that are not translations.
Let $\operatorname{SO}(Q)$ denote the group of matrices over
$\Fp$ preserving $Q$ with determinant one. A direct isometry
is a map $g(x)=\mathsf R x+w$, where
$\mathsf R\in\operatorname{SO}(Q)$ and $w\in\Fp^2$. Put
\[
             m_E(g)=|\{a\in E:g(a)\in E\}|.
\]
Equation \eqref{app:eq-distance-isometry-equation} identifies $(z,t)$
with the direct isometry
\begin{equation}\label{app:eq-distance-cayley}
g_{z,t}(a)=\mathsf R_t a+2(\mathrm{Id}_2+tJ)^{-1}z,\qquad
\mathsf R_t=\frac{-(1+\delta t^2)\mathrm{Id}_2+2tJ}{1-\delta t^2}.
\end{equation}
Indeed,
\[
 \mathsf R_t
 =-(\mathrm{Id}_2+tJ)^{-1}(\mathrm{Id}_2-tJ).
\]
Every element of $\operatorname{SO}(Q)$ has the form
$\alpha\mathrm{Id}_2+\beta J$ with
$\alpha^2-\delta\beta^2=1$.
To see this, write its first column as $(\alpha,\beta)$.
Preservation of $Q$ and determinant one force its second column
to be $(\delta\beta,\alpha)$. Conversely, every such matrix
preserves $Q$ and has determinant one.
The matrices $\mathsf R_t$ are precisely the elements of
$\operatorname{SO}(Q)\setminus\{\mathrm{Id}_2\}$, each occurring
once. In fact, a nonidentity element
$\alpha\mathrm{Id}_2+\beta J$ has $\alpha\ne1$, and its unique
parameter in \eqref{app:eq-distance-cayley} is
$t=\beta/(1-\alpha)$.
Thus, the points of $\Fp^3$ parametrize exactly the direct
isometries that are not translations. Moreover,
\begin{equation}\label{app:eq-distance-rich-motions}
m_E(g_{z,t})=r_{\mathcal L_E}(z,t).
\end{equation}

This parametrization gives the following counting identity. We
separate translations because they are not represented by points of $\Fp^3$ in \eqref{app:eq-distance-cayley}.

\begin{lemma}\label{app:lem-distance-energy-identity}
If $E\ne\varnothing$, then
\begin{equation}\label{app:eq-distance-energy-identity}
 \mathfrak E(E)
 =|E|^2+\sum_g m_E(g)\bigl(m_E(g)-1\bigr),
\end{equation}
where the sum is over all direct isometries. The translations
contribute at most $|E|^3$ to this sum. Consequently, with
$R_k=R_k(\mathcal L_E)$,
\begin{equation}\label{app:eq-distance-energy-tail}
\mathfrak E(E)\le |E|^2+|E|^3+ 2\sum_{k=2}^{|E|}(k-1)R_k.
\end{equation}
\end{lemma}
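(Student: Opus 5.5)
The identity \eqref{app:eq-distance-energy-identity} comes from a double count of quadruples through direct isometries, and the tail bound \eqref{app:eq-distance-energy-tail} then follows by summation by parts.

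\emph{Energy identity.} Fix a quadruple $(a,b,c,d)\in E^4$ with $Q(a-b)=Q(c-d)$. I will show that exactly one direct isometry $g$ satisfies $g(a)=c$ and $g(b)=d$, except in the degenerate case $a=b$, $c=d$.

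For existence and uniqueness when $a\ne b$: put $v=b-a$ and $w=d-c$. Then $Q(v)=Q(w)\ne0$, by anisotropy. The group $\operatorname{SO}(Q)$ consists of the matrices $\alpha\mathrm{Id}_2+\beta J$ with $\alpha^2-\delta\beta^2=1$, so it acts on $\Fp^2$ as multiplication by norm-one elements of $\mathbb F_{p^2}=\Fp(\omega)$. Indeed, identifying $x$ with $x_1+\omega x_2$, we have $Q(x)=N(x)$ and $J$ acts as multiplication by $\omega$. Hence a unique $\mathsf R$ satisfies $\mathsf Rv=w$, namely the quotient $w/v$, which has norm one. The translation part is then forced to be $c-\mathsf Ra$.

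For the degenerate case: if $a=b$, then $Q(c-d)=0$ forces $c=d$, again by anisotropy. Such quadruples number $|E|^2$, and they correspond to the pairs $(a,c)$, each of which is realized by many isometries. So they are counted separately.

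Conversely, every direct isometry $g$ and every ordered pair of distinct $a,b$ with $g(a),g(b)\in E$ yield a valid quadruple, since $g$ preserves $Q$. Therefore
\[
\mathfrak E(E)-|E|^2=\sum_g m_E(g)\bigl(m_E(g)-1\bigr).
\]

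\emph{Translations.} For a translation $g$ by $w$ we have $m_E(g)=|E\cap(E-w)|$. Hence $\sum_w m(m-1)\le\sum_w m^2=\mathfrak E_+(E)\le|E|^3$, where $\mathfrak E_+$ is the additive energy.

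\emph{Tail bound.} Every non-translation is $g_{z,t}$ for exactly one point $(z,t)\in\Fp^3$, and $m_E(g_{z,t})=r_{\mathcal L_E}(z,t)$ by \eqref{app:eq-distance-rich-motions}. Write $r=r_{\mathcal L_E}(u)$; by Lemma~\ref{app:lem-distance-line-properties}(iv), $r\le|E|$. Summation by parts gives
\[
r(r-1)=\sum_{k=2}^{r}\bigl(k(k-1)-(k-1)(k-2)\bigr)=\sum_{k=2}^{r}2(k-1).
\]
Summing over $u$ and exchanging the order of summation yields
\[
\sum_u r(r-1)=2\sum_{k=2}^{|E|}(k-1)R_k,
\]
which is \eqref{app:eq-distance-energy-tail}.

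The main point needing care is the parametrization claim, namely that the maps $g_{z,t}$ run over all non-translation direct isometries bijectively. This follows from the description of $\mathsf R_t$ as covering $\operatorname{SO}(Q)\setminus\{\mathrm{Id}_2\}$ exactly once, together with the invertibility of $\mathrm{Id}_2+tJ$, which makes $z\mapsto 2(\mathrm{Id}_2+tJ)^{-1}z$ a bijection onto the translation parts. Everything else is direct counting.
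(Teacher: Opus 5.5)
Your proposal is correct and follows the paper's proof. Like the paper, you separate the $|E|^2$ zero-distance quadruples using anisotropy, match each remaining quadruple to a unique direct isometry, bound the translations by $|E|^3$, and turn the non-translation part into $2\sum_k(k-1)R_k$ via \eqref{app:eq-distance-rich-motions} and the telescoping identity. The differences are cosmetic: you obtain the unique rotation by identifying $\operatorname{SO}(Q)$ with the norm-one elements of $\mathbb F_{p^2}=\Fp(\omega)$ rather than through the paper's explicit formulas for $\alpha,\beta$, and you bound the translations by the additive energy rather than by $\sum_w m_E(\tau_w)=|E|^2$ with $m_E(\tau_w)\le|E|$.
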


\begin{proof}
Anisotropy shows that the zero-distance quadruples are exactly those
with $a=b$ and $c=d$; there are $|E|^2$ of them. If
$a\ne b$, $c\ne d$, and $Q(a-b)=Q(c-d)$, there is exactly one
direct isometry sending $a$ to $c$ and $b$ to $d$.
Indeed, put
\[
 B_\delta(v,w)=v_1w_1-\delta v_2w_2.
\]
For nonzero vectors $v,w$ of equal norm, the unique matrix
$\alpha\mathrm{Id}_2+\beta J$ sending $v$ to $w$ has
\[
\alpha=\frac{B_\delta(v,w)}{Q(v)},\qquad
 \beta=\frac{\det(v,w)}{Q(v)}.
\]
The denominators are nonzero by anisotropy, and uniqueness follows
from $\det(v,Jv)=Q(v)$. The identity
\[
 B_\delta(v,w)^2-\delta\det(v,w)^2=Q(v)Q(w)
\]
shows that $\alpha^2-\delta\beta^2=1$.
Translation is then uniquely determined.
Counting ordered pairs of distinct elements of $E$ mapped into $E$
proves \eqref{app:eq-distance-energy-identity}.

For the translations $\tau_w(x)=x+w$, one has
\[
 \sum_{w\in\Fp^2}m_E(\tau_w)=|E|^2,
 \qquad 0\le m_E(\tau_w)\le |E|.
\]
Their contribution is therefore at most $|E|^3$.
For the remaining isometries, use
\eqref{app:eq-distance-rich-motions},
Lemma~\ref{app:lem-distance-line-properties},
and the identity
$r(r-1)=2\sum_{k=2}^r(k-1)$.
This proves \eqref{app:eq-distance-energy-tail}.
{This completes the proof.}
\end{proof}

\subsection{Sets of size at most the characteristic}

When $|E|<p/\sqrt6$, the whole family $\mathcal L_E$ satisfies
the size restriction in Theorem~\ref{thm:main}. Summing its rich-point
bounds yields the required energy estimate. A random subset argument
then extends this estimate to all $|E|\le p$.

\begin{proposition}\label{app:prop-small-energy}
Let $p$ be an odd prime, let $\delta\in\Fp^\times$ be a nonsquare,
and put $Q(x_1,x_2)=x_1^2-\delta x_2^2$.
If $E\subset\Fp^2$ satisfies $1\le |E|\le p$, then
\[
 \mathfrak E(E)\ll |E|^3\log(2|E|).
\]
The implied constant is absolute and independent of $\delta$.
\end{proposition}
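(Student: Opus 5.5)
Set $N=|E|$. The plan has two stages. First we treat $N<p/\sqrt6$ using the full line family. Then we pass to $N\le p$ by averaging over subsets.

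\emph{Small case.} Assume $N<p/\sqrt6$. The family $\mathcal L_E$ then has $n=N^2<p^2/6$ distinct lines, so Theorem~\ref{thm:main} applies. By Lemma~\ref{app:lem-distance-line-properties}, each plane (over the closure) contains at most $N=\sqrt n$ lines, so $s=\sqrt n$. Corollary~\ref{cor:RP} then gives
\[
R_k\ll \frac{N^3}{k^2}+\frac{N^2}{k}
\]
for $k\ge3$. For $k=2$, Lemma~\ref{app:lem-distance-line-properties} shows that every plane contains at most $\sqrt n$ lines and every quadric at most $2\sqrt n$. Lemma~\ref{thm:background-two-rich} with $c=1$ and $q=p$ (after taking projective closures) therefore gives $R_2\ll n^{3/2}=N^3$. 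Also $R_k=0$ for $k>N$, by part (iv) of Lemma~\ref{app:lem-distance-line-properties}. Substituting into \eqref{app:eq-distance-energy-tail},
\[
\mathfrak E(E)\le N^2+N^3+2R_2+2\sum_{k=3}^{N}(k-1)R_k\ll N^3+\sum_{k\le N}\Bigl(\frac{N^3}{k}+N^2\Bigr)\ll N^3\log(2N).
\]
This is the point where the $k^{-2}$ decay costs only a logarithm.

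\emph{Extension to $N\le p$.} Now suppose $p/\sqrt6\le N\le p$. Put $r=\lceil N/8\rceil$, so that $r<p/\sqrt6$ (for $p$ large; small $p$ are trivial since $\mathfrak E\le N^4\ll N^3$ for bounded $N$). Pick a uniformly random $r$-subset $E'\subseteq E$. A fixed quadruple $(a,b,c,d)$ with $j$ distinct entries lies in $E'^4$ with probability about $(r/N)^j$, and this is $\gg (r/N)^4\gg 1$ for every $j\le4$, since $r/N$ is bounded below. Hence
\[
\mathfrak E(E)\ll \mathbb E\,\mathfrak E(E')\ll r^3\log(2r)\ll N^3\log(2N).
\]
Equivalently, one can cover $E$ by $O(1)$ disjoint pieces of size less than $p/\sqrt6$ and use Hölder's inequality: writing $\mathfrak E$ as $\sum_\lambda \nu(\lambda)^2$ with $\nu(\lambda)=\#\{(a,b):Q(a-b)=\lambda\}$, splitting $\nu=\sum_{i,j}\nu_{ij}$ over pairs of pieces loses only a constant factor. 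For cross pairs $E_i\times E_j$ we still need a bound, which is why the random-subset version is cleaner: any quadruple lies in a random $E'$ with probability bounded below.

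\emph{Main obstacle.} The delicate points are verifying the hypotheses of Lemma~\ref{thm:background-two-rich} and Theorem~\ref{thm:main} over the algebraic closure. The plane and quadric bounds in Lemma~\ref{app:lem-distance-line-properties} already hold over the closure, and the size constraints $n<p^2/6<11p^2/6$ hold. The remaining work is the bookkeeping that the constants are absolute and independent of $\delta$.
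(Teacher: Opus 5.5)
Your proposal is correct and follows essentially the same route as the paper. For $|E|<p/\sqrt6$, you use the full line family $\mathcal L_E$, Koll\'ar's bound (Lemma~\ref{thm:background-two-rich}) for $R_2$, and the rich-point estimate for $k\ge3$ in \eqref{app:eq-distance-energy-tail}; you then extend to $|E|\le p$ by averaging over a uniformly random subset of proportional size. The paper uses subsets of size $\lfloor |E|/3\rfloor$ and treats $|E|<12$ trivially, whereas your $\lceil N/8\rceil$ works equally well once bounded $N$ is dismissed, and the covering-by-pieces alternative you sketch is not needed.
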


\begin{proof}
First suppose $|E|<p/\sqrt6$. The family $\mathcal L_E$ consists of
$|E|^2<p^2/6$ lines, with {at most} $|E|$ {lines in any plane} and {at most}
$2|E|$ {lines in any quadric}, by
Lemma~\ref{app:lem-distance-line-properties}{. The same bounds hold for
the projective closures: the plane at infinity contains no input line,
and every other projective plane or quadric restricts to an affine
equation of degree at most two}. Lemma~\ref{thm:background-two-rich}
therefore gives $R_2(\mathcal L_E)\ll |E|^3$.
Theorem~\ref{thm:main} gives
\[
 R_k(\mathcal L_E)\ll\frac{|E|^3}{k^2}+\frac{|E|^2}{k}
 \qquad(k\ge3).
\]
Insert these bounds into \eqref{app:eq-distance-energy-tail} to obtain
\[
 \mathfrak E(E)\ll |E|^3+|E|^3\sum_{k=3}^{|E|}\frac1k
                  +|E|^2\sum_{k=3}^{|E|}1
       \ll |E|^3\log(2|E|).
\]
Empty sums cause no difficulty when $|E|\le2$.

For the remaining range, $p/\sqrt6\le |E|\le p$, the cases $|E|<12$
follow from $\mathfrak E(E)\le |E|^4$ with an absolute constant.
Otherwise put ${m}=\lfloor |E|/3\rfloor\ge4$, and choose a uniformly
random ${m}$-element subset $E'\subset E$. Then
${m}\le p/3<p/\sqrt6$, so the estimate just proved applies to every
such subset. A quadruple counted by $\mathfrak E(E)$ uses at most four
distinct elements of $E$. Its probability of being contained in
$(E')^4$ is at least
\[
 \frac{{m}({m}-1)({m}-2)({m}-3)}{|E|(|E|-1)(|E|-2)(|E|-3)}\ge16^{-4},
\]
uniformly for $|E|\ge12$ and ${m}=\lfloor |E|/3\rfloor$.
By linearity of expectation,
\[
 \mathfrak E(E)\ll\mathbb E \mathfrak E(E')\ll {m}^3\log(2{m}) \ll |E|^3\log(2|E|).
\]
This {completes} the {proof}.
\end{proof}

The first range of Theorem~\ref{thm:distance} now follows immediately from the energy estimate.

\begin{proof}[Proof of Theorem~\ref{thm:distance} when $|E|\le p$]
Combine \eqref{app:eq-distance-energy-cs} and
Proposition~\ref{app:prop-small-energy} to obtain
\[
|\Delta(E)|\gg \frac{|E|}{\log(2|E|)}.
\]
{The bound holds for $|E|\le p$. This completes the proof.}
\end{proof}

\subsection{Larger sets}

For $|E|\ge p$, the full distance line family is too large for a
direct application of Theorem~\ref{thm:main}. We apply that theorem
to a random subfamily of its lines. The following lemma transfers the
resulting bound back to points incident to many lines of the original
family.

\begin{lemma}\label{app:lem-sampled-tail}
There are absolute constants
$C_{\mathrm{samp}},C_{\mathrm{tail}}>0$
with the following property.
Let $p$ be an odd prime, let $\delta\in\Fp^\times$ be a nonsquare,
and put $Q(x_1,x_2)=x_1^2-\delta x_2^2$.
Let $E\subset\Fp^2$ satisfy $p\le |E|\le p^2$, and let
$\mathcal L_E$ be the family defined in
\eqref{app:eq-distance-es-line}.
For every integer $k\ge C_{\mathrm{samp}}|E|^2/p^2$,
\begin{equation}\label{app:eq-sampled-tail}
 R_k(\mathcal L_E)
 \le C_{\mathrm{tail}}
 \left(\frac{|E|^4}{p k^2}+\frac{|E|^2}{k}\right).
\end{equation}
\end{lemma}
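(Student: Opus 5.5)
We sample the line family $\mathcal L_E$ at rate $\pi=c\,p^2/|E|^2\in(0,1]$, apply Theorem~\ref{thm:main} to the sample, and transfer the rich-point bound back to $\mathcal L_E$ by a Chernoff argument. The absolute constant $c$ is chosen small, say $c=1/12$.

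\textbf{Sampling.} Keep each line of $\mathcal L_E$ independently with probability $\pi$, and call the random subfamily $\mathcal L'$. Its size $Z=|\mathcal L'|$ has mean $\pi|E|^2=cp^2$. By Lemma~\ref{lem:binomial-tails}, $\Pr\{Z\ge p^2/6\}\le(\mathrm e c\cdot 6)^{p^2/6}$, which is small for $c<1/(6\mathrm e)$. On the complementary event, the size hypothesis $|\mathcal L'|<p^2/6$ of Theorem~\ref{thm:main} holds. The plane condition is inherited: by Lemma~\ref{app:lem-distance-line-properties}, every plane contains at most $s=|E|$ lines of $\mathcal L'$. Theorem~\ref{thm:main} then gives, for every integer $k'\ge3$,
\[
R_{k'}(\mathcal L')\ll \frac{Z^{3/2}+Z|E|}{k'^2}+\frac{Z}{k'}\ll\frac{p^3+p^2|E|}{k'^2}+\frac{p^2}{k'}\ll\frac{p^2|E|}{k'^2}+\frac{p^2}{k'},
\]
where the last step uses $|E|\ge p$. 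When the size hypothesis fails we use the trivial bound $R_{k'}(\mathcal L')\le p^3$ (the points of $\Fp^3$). The failure probability is exponentially small in $p^2$, so this event contributes at most $p^3 e^{-\Omega(p^2)}\ll 1$ to the expectation, which is negligible.

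\textbf{Transfer.} Fix $k\ge C_{\mathrm{samp}}|E|^2/p^2$ and set $k'=\lceil \pi k/2\rceil$. Then $\pi k\ge cC_{\mathrm{samp}}$, so choosing $C_{\mathrm{samp}}$ large makes $k'\ge3$ and $\pi k\ge 48$. Let $u$ be a point with $r_{\mathcal L_E}(u)\ge k$. The number of its incident lines that survive stochastically dominates $\mathrm{Bin}(k,\pi)$. By the last assertion of Lemma~\ref{lem:binomial-tails}, at least $\pi k/2$ of them survive with probability at least $1-\exp(-\pi k/8)\ge1/2$; in that case $u$ is $k'$-rich in $\mathcal L'$. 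Linearity of expectation gives
\[
\tfrac12R_k(\mathcal L_E)\le\mathbb E R_{k'}(\mathcal L')\ll \frac{p^2|E|}{\pi^2k^2}+\frac{p^2}{\pi k}+1
\ll\frac{|E|^5}{p^2k^2}+\frac{|E|^2}{k}+1.
\]
This argument carries the plane bound $s=|E|$ through the sampling, and it yields $|E|^5/(p^2k^2)$ instead of the claimed $|E|^4/(pk^2)$. The two agree only when $|E|=p$.

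\textbf{Main obstacle.} The difficulty is the plane term $ns/k^2$. After sampling, a plane does not contain $|E|$ of the surviving lines but only about $\pi|E|$ of them, up to Chernoff fluctuations. I would therefore bound the sample's $s$ by $s'=\max(C\pi|E|,\,C\log p)$ simultaneously for all planes. The relevant planes are only those containing at least two lines, and there are at most $|E|^4$ of them. For planes with expected count $\pi|E|\gg\log p$, the Chernoff upper tail of Lemma~\ref{lem:binomial-tails} plus a union bound over these planes succeeds with high probability. With $s'\approx\pi|E|=cp^2/|E|\le cp$ we get $Zs'\ll p^3$, so $R_{k'}(\mathcal L')\ll p^3/k'^2+p^2/k'$. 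Transferring as above gives
\[
R_k\ll \frac{p^3|E|^4}{p^4k^2}+\frac{|E|^2}{k}=\frac{|E|^4}{pk^2}+\frac{|E|^2}{k},
\]
exactly \eqref{app:eq-sampled-tail}. The additive $+1$ is absorbed because $R_k\ge1$ forces $k\le|E|$, whence $|E|^2/k\ge1$. The leftover range, where $\pi|E|\lesssim\log p$ (i.e.\ $|E|\gtrsim p^2/\log p$), is what needs care. There I would either let the union-bound regime use $s'\ll\log p$, which is still below $p$ and so still gives $Zs'\ll p^3$, or use the structure of Lemma~\ref{app:lem-distance-line-properties}: a plane with $q_2\ne0$ contains at most one line $\ell_{a,b}$ for each $a$, so its count is a sum of independent indicators with at most $|E|$ terms. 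That makes the tail bound uniform and avoids any dependence on $\log p$.
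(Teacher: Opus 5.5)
Your final argument in the ``Main obstacle'' paragraph is correct and is essentially the paper's proof: sample at rate $\asymp p^2/|E|^2$, use Chernoff and a union bound over planes to get $O(p)$ sampled lines per plane and fewer than $p^2/6$ lines in total, apply the rich-point theorem, and pull the bound back via the lower Chernoff tail at $j\asymp\pi k$. The paper simply uses the threshold $p$ for every plane: the tail $(\mathrm e/12)^p$ beats the $p(p^2+p+1)$ rational planes, and primes $p<240$ are handled by noting that then $k>|E|$, so your $\log p$ case distinction is unnecessary; also, $c=1/12$ violates your own condition $c<1/(6\mathrm e)$, so either take $c$ smaller or use the tail $\exp(-\delta^2\mu/3)$ with $\delta=1$.
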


\begin{proof}
{First suppose $p\ge240$.} Independently retain each line with probability $\pi =\frac{p^2}{12|E|^2}$,
and denote the sampled family by $\mathcal A$. Its expected size is
$p^2/12$. Lemma~\ref{lem:binomial-tails} gives
\[
 \mathop{\Pr}\left(|\mathcal A|<p^2/24\ \text{or}\ |\mathcal A|\ge p^2/6\right)
 \le2\exp(-p^2/96).
\]
Each rational affine plane contains at most
$|E|$ original lines, so its expected sample size is at most
$\pi  |E|\le p/12$. Its probability of containing at least $p$
sampled lines is at most $(\mathrm e/12)^p$.

There are $p(p^2+p+1)$ rational affine planes.
Let $\mathcal G$ be the event that every plane contains
at most $p$ sampled lines and that $\frac{p^2}{24}\le|\mathcal A|<\frac{p^2}{6}$. A union bound gives $\Pr(\mathcal G)\ge1-\varepsilon_p$, where
\[
 \varepsilon_p
 ={2\exp(-p^2/96)}  +p(p^2+p+1)\left(\frac{\mathrm e}{12}\right)^p
 \longrightarrow0
 \qquad(p\to\infty).
\]
It suffices to count rational planes, because a geometric plane
containing two distinct rational lines is rational. On $\mathcal G$,
{each} plane {contains} at most $\sqrt{24}\sqrt{|\mathcal A|}$ {sampled lines}.
Therefore, {Corollary}~{\ref{cor:RP}} applies to $\mathcal A$ {with an
absolute implied constant}.

Assume $\pi k\ge20$ and set $j=\lfloor{\pi} k/2\rfloor$.
Then $j\ge3$ and $j\ge\pi  k/3$.
At any original $k$-rich point $x$, the sampled multiplicity is
binomial with mean at least $\pi  k$. Hence,
\[
\mathop{\Pr}(r_{\mathcal A}(x)\ge j) \ge1-\mathop{\exp}(-\pi k/8).
\]
{For $p\ge240$, the displayed expression satisfies
$\varepsilon_p\le1/4$.}
Since $\mathop{\exp}(-20/8)<1/4$, the union bound gives
\[
 \mathop{\Pr}\bigl(\mathcal G\,\text{and}\, r_{\mathcal A}(x)\ge j\bigr)\ge\tfrac12.
\]
No independence between these two events is required. Summing over
the original rich points and taking expectations gives
\[
 \frac12 R_k(\mathcal L_E)
 \le\mathbb E\bigl[\mathbf1_{\mathcal G}R_j(\mathcal A)\bigr]
 \ll\frac{p^3}{(\pi  k)^2}+\frac{p^2}{\pi  k}
 \ll\frac{|E|^4}{p k^2}+\frac{|E|^2}{k}.
\]
Thus, $C_{\mathrm{samp}}=240$ suffices for these primes.

{For $p<240$, the assumptions $|E|\ge p$ and
$k\ge240|E|^2/p^2$ imply $k>|E|$. Hence
$R_k(\mathcal L_E)=0$ by
Lemma~\ref{app:lem-distance-line-properties}.
Thus $C_{\mathrm{samp}}=240$ works for every odd prime.}
\end{proof}

The sampled estimate applies above a multiplicity threshold of
order $|E|^2/p^2$. Below this threshold, the trivial bound by the total
number of points in $\Fp^3$ is sufficient. Combining these two ranges
gives the energy bound needed for larger sets.

\begin{proposition}\label{app:prop-large-energy}
Let $p$ be an odd prime, let $\delta\in\Fp^\times$ be a nonsquare,
and put $Q(x_1,x_2)=x_1^2-\delta x_2^2$.
If $E\subset\Fp^2$ satisfies $p\le |E|\le p^2$, then
\[
\mathfrak E(E)\ll \frac{|E|^4}{p}\log\left(\frac{2p^2}{|E|}\right).
\]
The implied constant is absolute and independent of $\delta$.
\end{proposition}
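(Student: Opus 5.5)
Start from the energy tail bound \eqref{app:eq-distance-energy-tail} of Lemma~\ref{app:lem-distance-energy-identity}:
\[
\mathfrak E(E)\le |E|^2+|E|^3+2\sum_{k=2}^{|E|}(k-1)R_k(\mathcal L_E).
\]
Since $|E|\ge p$, the first two terms satisfy $|E|^2+|E|^3\le 2|E|^4/p$. Split the sum at the threshold $k_0=\lceil C_{\mathrm{samp}}|E|^2/p^2\rceil$ from Lemma~\ref{app:lem-sampled-tail}.

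\emph{Low range} $2\le k<k_0$. Use the identity $\sum_{k\ge2}(k-1)\mathbf 1[r\ge k]=\binom r2$ restricted to $k<k_0$. Each point $u$ contributes at most $\min\{\binom{r(u)}{2},\,k_0 r(u)\}\le k_0 r(u)$. Summing over $u\in\Fp^3$ and using $\sum_u r(u)\le |E|^2 p$ (each line has $p$ rational points) gives at most $k_0|E|^2p\ll |E|^4/p$. This is the "trivial bound by the total number of points" mentioned before the statement.

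\emph{High range} $k_0\le k\le |E|$. Insert \eqref{app:eq-sampled-tail}. The first term gives
\[
\sum_{k=k_0}^{|E|}(k-1)\frac{|E|^4}{pk^2}\ll \frac{|E|^4}{p}\log\frac{2|E|}{k_0}\ll\frac{|E|^4}{p}\log\frac{2p^2}{|E|},
\]
using $|E|/k_0\asymp p^2/|E|$. When $k_0>|E|$ the range is empty and no logarithm is needed.

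The second term, $\sum_{k\le |E|}(k-1)|E|^2/k\le |E|^3\le |E|^4/p$, is the one place needing care, since this is where $|E|\ge p$ is used. It would be worse only without that hypothesis.

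Adding the three contributions gives $\mathfrak E(E)\ll (|E|^4/p)\log(2p^2/|E|)$. The constant is absolute, because $C_{\mathrm{samp}}$ and $C_{\mathrm{tail}}$ are absolute and independent of $\delta$.

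The main obstacle has already been handled in Lemma~\ref{app:lem-sampled-tail}. What remains is bookkeeping at the threshold $k_0$, where the logarithm must come out as $\log(2p^2/|E|)$ rather than $\log p$. That requires bounding the low range by $k_0\sum_u r(u)$, not by $\sum_u r(u)^2$.
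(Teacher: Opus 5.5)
Your proof is correct and is essentially the paper's argument: split at $k_*=\lceil C_{\mathrm{samp}}|E|^2/p^2\rceil$, apply Lemma~\ref{app:lem-sampled-tail} above the threshold with the harmonic sum bounded by $1+\log(p^2/|E|)$, and absorb $|E|^3\le|E|^4/p$. The only difference is in the low range. There the paper uses the cruder bound $R_k\le p^3$ to get $p^3k_*^2\ll|E|^4/p$, while you use $\sum_u r(u)=p|E|^2$ to get $k_*\,p|E|^2\ll|E|^4/p$; both suffice, so your closing remark that the incidence-count bound is \emph{required} there is overstated.
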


\begin{proof}
Put $k_* =\lceil C_{\mathrm{samp}}|E|^2/p^2\rceil$, with $C_{\mathrm{samp}}$ as in
Lemma~\ref{app:lem-sampled-tail}. Since $|E|\ge p$,
$k_*\ll |E|^2/p^2$. The trivial bound $R_k\le p^3$ gives
\[
 \sum_{2\le k<k_*}(k-1)R_k
                 \ll p^3 k_*^2\ll |E|^4/p.
\]
For the rest of the sum, \eqref{app:eq-sampled-tail} gives
\[
 \sum_{k_*\le k\le |E|}(k-1)R_k
 \ll\frac{|E|^4}{p}\sum_{k_*\le k\le |E|}\frac1k+|E|^3
 \ll\frac{|E|^4}{p}\log\left(\frac{2p^2}{|E|}\right)+|E|^3.
\]
If $k_*>|E|$, the sum is empty. Otherwise the harmonic sum is at most
$1+\log(|E|/k_*)\le1+\log(p^2/|E|)$, which is an absolute multiple of
$\log(2p^2/|E|)$. Finally, $|E|^3\le |E|^4/p$ because $|E|\ge p$.
The energy identity \eqref{app:eq-distance-energy-tail} {gives the asserted bound. This completes the proof.}
\end{proof}

\begin{proof}[Proof of Theorem~\ref{thm:distance} when $p\le|E|\le p^2$]
Combine \eqref{app:eq-distance-energy-cs} with
Proposition~\ref{app:prop-large-energy} to obtain
\[
|\Delta(E)|\gg \frac{p}{\log(2p^2/|E|)}.
\]
{This completes the proof.}
\end{proof}

{\subsection{Split quadratic distances}
\label{app:sec-split-distances}

We now consider the split form $H(x_1,x_2)=x_1x_2$. Its isotropic
affine lines are the horizontal and vertical lines. The distance
estimate in this case requires a separate treatment of points on
these lines.

\begin{proposition}\label{app:prop-split-distance}
Let $p$ be an odd prime, and let $E\subseteq\Fp^2$ satisfy
$p\leq|E|\leq p^2$. Suppose that $E$ is not contained in an
isotropic line for $H$. Put $L=\log(2p^2/|E|)$. Then
\begin{equation}\label{app:eq-split-distance}
 |\Delta_H(E)|\gg
 \min\left\{\frac{|E|}{L^2},\frac{p}{L}\right\}.
\end{equation}
The implied constant is absolute.
\end{proposition}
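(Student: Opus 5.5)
The plan is to run the Elekes--Sharir argument of the anisotropic case, but restricted to pairs at nonzero $H$-distance, and to handle concentration on isotropic lines separately. The proper isometries of $H$ are $g(x)=(\lambda x_1+w_1,\lambda^{-1}x_2+w_2)$ with $\lambda\in\Fp^\times$. For $a,b$ with $H(a-b)\neq0$, i.e.\ $a,b$ not on a common horizontal or vertical line, and $c,d$ with $H(c-d)=H(a-b)$, there is exactly one such isometry sending $(a,b)\mapsto(c,d)$. Since $\sigma$-coordinates are already split here, I would use the analogue of \eqref{app:eq-distance-split-line} directly, with $a_+=a_1$ and $a_-=a_2$. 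I would compose with the swap $(x_1,x_2)\mapsto(x_2,x_1)$ where needed (the ``reflected'' family), so that non-translation isometries correspond to points of $\Fp^3$ and the multiplicity identity \eqref{app:eq-distance-rich-motions} holds.

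Define the nonzero energy $\mathfrak E^*(E')=|\{(a,b,c,d)\in E'^4:H(a-b)=H(c-d)\neq0\}|$. Let $N(E')$ be the number of ordered pairs in $E'$ at nonzero distance. Then Cauchy--Schwarz gives $|\Delta_H(E)|\ge N(E')^2/\mathfrak E^*(E')$. The analogue of Lemma~\ref{app:lem-distance-energy-identity} again bounds $\mathfrak E^*$ by $|E'|^3$ (translations) plus $2\sum_k (k-1)R_k$ of the line family $\mathcal L^*$ indexed by nonzero-distance pairs.

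The plane count from the proof of Lemma~\ref{app:lem-distance-line-properties} now reads as follows. A plane with $q_2\neq0$ fixes $b_2$ given $a$, giving at most $|E'|F$ lines, where $F$ is the largest horizontal or vertical fibre of $E'$. A plane with $q_2=0\neq q_1$ fixes $a_1$ and $b_1$, giving at most $F^2$ lines. So $s\le |E'|F$, and similarly every quadric contains $O(|E'|F)$ lines.

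To make $F$ comparable to typical fibres, I would dyadically decompose $E$ by the sizes of the horizontal and vertical fibres through each point. This gives $O(L^2)$ classes, or $O(L)$ after discarding classes with tiny fibres, whose total contribution is negligible. I would pick a class $E'$ with $|E'|\gg|E|/L^2$ (or $|E|/L$) whose fibres all have size $\asymp F$.

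There are now two cases. If $F$ is large, there is an isotropic line $\ell$ containing $F$ points of $E$. Since $E\not\subset\ell$, take $a\in E\setminus\ell$. Then $b\mapsto H(a-b)$ is injective on $\ell\cap E$ minus at most one point, so $|\Delta_H(E)|\ge F-1$. Otherwise $F$ is small. In this case I would apply Corollary~\ref{cor:RP} after the sampling of Lemma~\ref{app:lem-sampled-tail} when $|E'|>p/\sqrt6$, or Theorem~\ref{thm:main} directly, keeping the explicit $ns/k^2$ term with $s\lesssim|E'|F$. For $k=2$ I would use Lemma~\ref{thm:background-two-rich}. Summing as in Proposition~\ref{app:prop-large-energy} should give $\mathfrak E^*(E')\ll (|E'|^4/p+|E'|^3F)L$. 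Since fibres have size $\asymp F\ll p$, most pairs are at nonzero distance, so $N(E')\gg|E'|^2$. This gives $|\Delta_H|\gg\min\{p,|E'|/F\}/L$. Balancing against the bound $F$ from the first case yields $\min\{p/L,\ |E|/L^2\}$; the extra logarithm comes from the dyadic selection.

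The main obstacle I expect is the geometric input in the small-$F$ case. Three issues arise. First, the plane and quadric bounds for the reflected family must be checked carefully, especially for quadrics containing whole rulings of isotropic directions. Second, the sampling must produce a subfamily satisfying the plane hypothesis $s\ll\sqrt{|\mathcal A|}$ only up to the factor $F\,|E'|/\sqrt n$, so I would keep $s$ explicit via Theorem~\ref{thm:main} rather than Corollary~\ref{cor:RP}. Third, the dyadic loss must be tracked so that it costs only $L$ and not $L^2$ in the $p/L$ term. I would first try to check the quadric bound by adapting the $F(u,v,0)$ argument, with $b\mapsto b_2$ replaced by injectivity on each fibre, which costs a factor $F$.
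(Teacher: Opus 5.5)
There is a genuine gap in your plane count, and because of it the final balancing step is false as stated. The reflected line through $(a,b)\in E'\times(E')^{\top}$ passes through $(a_1,b_2,0)$ and $(b_1,a_2,1)$. So a plane $q_1u+q_2v+q_3t+q_0=0$ contains it iff $q_1a_1+q_2b_2+q_0=0$ and $q_1b_1+q_2a_2+q_3+q_0=0$, and you used only the first condition.

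If $q_1q_2\neq0$, the two conditions fix both coordinates of $b$, so the plane contains at most $|E'|$ lines. If exactly one of $q_1,q_2$ vanishes, they fix one coordinate of $a$ and one of $b$. Because the target set is the transpose, this gives $M_v(E')M_h(E')$ lines. Without the reflection you would get $M_v^2$ or $M_h^2$, and this is exactly why the reflection is used.

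With your bound $s\le|E'|F$, the term $ns/k^2$ gains a factor $F$, and your two cases only give $\max\{F,\min\{p,|E'|/F\}/L\}$, which can be as small as $\sqrt{|E'|/L}$. For instance, take a grid $X\times Y$ with $|X|=|Y|=p^{3/4}$. Every fibre has size $F=p^{3/4}$, and both cases yield only about $p^{3/4}$, while the proposition asserts $\gg p/\log p$. The same $F$-loss in your quadric bound would also spoil the $R_2$ estimate from Lemma~\ref{thm:background-two-rich}.

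The missing idea is to select $E'$ so that $M_v(E')M_h(E')\ll|E'|$. This makes $s\ll\sqrt n$, so Theorem~\ref{thm:main}, Corollary~\ref{cor:RP} (after sampling) and Lemma~\ref{thm:background-two-rich} apply with no loss. A single dyadic decomposition of the vertical fibres suffices. Suppose the chosen class $A$ consists of $r$ fibres with sizes in $[b,2b)$. Then $M_v(A)<2b$, $M_h(A)\le r$ (a horizontal line meets each vertical fibre at most once), and $|A|\ge br$, so $M_v(A)M_h(A)\le2|A|$. Some class has $|A|\gg|E|/L$.

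The quadric bound is $8|A|$ with no factor $F$, because the lines with source $a$ all join the two fixed skew lines $\{u=a_1,t=0\}$ and $\{v=a_2,t=1\}$. The isotropic case is then needed only when $M_v(A)$ or $M_h(A)$ is at least $|A|/4$; there a point of $E$ off the rich line gives $|A|/4$ distances. Otherwise $\nu_A(0)\le|A|(M_v+M_h)<|A|^2/2$. Note that $F\ll p$ is not what guarantees that most pairs have nonzero distance.

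No balancing occurs. The second logarithm in $|E|/L^2$ comes from $|A|\gg|E|/L$ together with $\log(2p)\ll L$ when $|A|<p$.

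Finally, the lines should be indexed by all of $A\times A^{\top}$, not by nonzero-distance pairs. The restriction to nonzero distances enters only through the quadruples counted and the points with $t\neq0,1$.
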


For a nonempty set $A\subseteq\Fp^2$, write
\[
 M_v(A)=\max_{x\in\Fp}|A\cap\{x_1=x\}|,
 \qquad
 M_h(A)=\max_{y\in\Fp}|A\cap\{x_2=y\}|.
\]
Define
\[
 \nu_A(s)=|\{(a,b)\in A^2:H(a-b)=s\}|,
 \qquad
 \mathfrak E_H^\times(A)=\sum_{s\in\Fp^\times}\nu_A(s)^2.
\]
We use only nonzero distances in this energy. Cauchy--Schwarz gives
\begin{equation}\label{app:eq-split-energy-cs}
 \bigl(|A|^2-\nu_A(0)\bigr)^2
 \leq |\Delta_H(A)\setminus\{0\}|\,
       \mathfrak E_H^\times(A).
\end{equation}

Put $A^\top=\{(a_2,a_1):(a_1,a_2)\in A\}$. For
$(a,b)\in A\times A^\top$, define
\[
 \ell_{a,b}^H
 =\{((1-t)a_1+tb_1,\;ta_2+(1-t)b_2,\;t):t\in\Fp\},
 \qquad
 \mathcal L_A^H=\{\ell_{a,b}^H:(a,b)\in A\times A^\top\}.
\]
Let $R_k^*(\mathcal L_A^H)$ count the points $(u,v,t)\in\Fp^3$
with $t\ne0,1$ that are incident to at least $k$ input lines.
The reflection in the definition of $A^\top$ preserves $H$ and
gives the following plane bound in terms of $M_v(A)M_h(A)$.

\begin{lemma}\label{app:lem-split-line-properties}
The family $\mathcal L_A^H$ consists of $|A|^2$ distinct lines.
Every plane contains at most
$\max\{|A|,M_v(A)M_h(A)\}$ of these lines, and every
irreducible quadric contains at most $8|A|$ of them.
The plane and quadric bounds hold over the algebraic closure
and for the projective closures of the input lines.
Every point with $t\ne0,1$ lies on at most $|A|$ input lines.
Moreover,
\begin{equation}\label{app:eq-split-energy-tail}
\mathfrak E_H^\times(A) \leq |A|^3+ 2\sum_{k=2}^{|A|}(k-1)R_k^*(\mathcal L_A^H).
\end{equation}
\end{lemma}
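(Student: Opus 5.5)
The proof runs parallel to Lemma~\ref{app:lem-distance-line-properties}: we read off the isometry parametrization for the split form and then handle the plane, quadric and multiplicity bounds by evaluating the lines at convenient parameter values. For $a\in A$ and $b=(b_1,b_2)\in A^\top$ (so $b=c^\top$ with $c=(b_2,b_1)\in A$), the line $\ell^H_{a,b}$ at parameter $t$ is the point
\[
 \bigl((1-t)a_1+tb_1,\ ta_2+(1-t)b_2,\ t\bigr).
\]
Distinctness comes from evaluation. At $t=0$ we read off $(a_1,b_2)$ and at $t=1$ we read off $(b_1,a_2)$, so the pair $(a,b)$ is recovered.

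For the multiplicity bound, fix $(u,v,t)$ with $t\ne0,1$. The incidence equations are $u=(1-t)a_1+tb_1$ and $v=ta_2+(1-t)b_2$, which solve uniquely for $b$ given $a$, since $t\neq0$ and $1-t\ne0$. Hence each point lies on at most $|A|$ lines.

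The energy bound \eqref{app:eq-split-energy-tail} needs the isometry dictionary. Write $c=(b_2,b_1)\in A$. The orientation-preserving isometries of $H$ are $(x_1,x_2)\mapsto(\lambda x_1,\lambda^{-1}x_2)+w$. The point $(u,v,t)$ should correspond to the map sending $a$ to $c$ with $\lambda=-(1-t)/t$: indeed $u=(1-t)a_1+tc_2$ and $v=ta_2+(1-t)c_1$ are exactly the conditions that $c_2=\lambda' a_1+\dots$ after the swap, and I will verify this by a direct computation. The swap accounts for the reflection. The parameter $t=1/2$ (so $\lambda=-1$) and the values $t\ne0,1$ cover exactly the dilation group $\lambda\in\Fp^\times$ with $\lambda\ne-1$ excluded or included. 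I will check which single value is missing; it plays the role that translations played before.

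Given that dictionary, the energy argument runs as before. For $H(a-b)=H(c-d)=s\ne0$, both $a-b$ and $c-d$ are non-isotropic, so there is a unique direct isometry of the form $\mathrm{diag}(\lambda,\lambda^{-1})$ plus a translation sending $(a,b)\mapsto(c,d)$. The missing class of isometries (the translations, or whichever class $t=0,1,\infty$ represents) contributes at most $|A|^3$. The terms $a=b$ have distance $0$ and are excluded. Writing $r(r-1)=2\sum_{k\le r}(k-1)$ then gives \eqref{app:eq-split-energy-tail}.

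For the plane bound, take a plane $q_1u+q_2v+q_3t+q_0=0$. If $q_1=q_2=0$ it contains no line. If $q_1\ne0$ and $q_2\ne0$, evaluating at $t=0$ gives a relation determining $b_2$ from $a_1$, and evaluating at $t=1$ determines $b_1$ from $a_2$. Hence $b$ is determined by $a$, giving at most $|A|$ lines. If $q_2=0$ and $q_1\neq0$, then $a_1$ and $b_1$ are fixed. Then $a$ lies on one vertical line and $b\in A^\top$ has fixed first coordinate, i.e.\ $c$ lies on one horizontal line of $A$. The count is therefore at most $M_v(A)M_h(A)$, and the case $q_1=0$ is symmetric.

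For an irreducible quadric $F(u,v,t)$, I expect the main obstacle to be that, unlike the anisotropic case, the coordinates are already split, so the restrictions $F(u,v,0)$ and $F(u,v,1)$ must both be used. The plan is as follows. $F(u,v,0)$ is nonzero because $t\nmid F$. Write it as $\sum_j f_j(u)v^j$. For $a$ with $f_r(a_1)\ne0$ there are at most $2$ admissible values of $b_2$. Then evaluating at $t=1$, $F(b_1,a_2,1)$ has nonzero restriction (since $t-1\nmid F$), which constrains $b_1$ to at most $2$ values given $a_2$, unless a degenerate leading coefficient vanishes. Bookkeeping the bad values of $a_1$ and $a_2$ (at most $2$ each, each contributing at most $|A|$ lines) together with the good cases ($\le 2\cdot2$ per $a$) gives a bound of the form $8|A|$.

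Finally, the projective closures add no lines at infinity, and a plane or quadric containing two rational lines restricts to the same affine analysis, so the bounds hold over the algebraic closure.
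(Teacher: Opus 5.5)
Your arguments for distinctness, for the multiplicity bound at points with $t\ne0,1$, and for the plane bound are correct and match the paper's. The gap is in the quadric bound. You carried over the bookkeeping of Lemma~\ref{app:lem-distance-line-properties}, but there each bad value of $a_+$ corresponds to a single source $a$, because $a\mapsto a_+$ is injective. Here a bad value of $a_1$ is shared by up to $M_v(A)$ sources, and a bad value of $a_2$ by up to $M_h(A)$. More seriously, evaluating at $t=0$ and $t=1$ alone cannot control a source for which exactly one boundary restriction degenerates. Suppose $F(a_1,v,0)\equiv0$ but $F(u,a_2,1)\not\equiv0$. Then the evaluation at $t=1$ confines $b_1$ to at most two values, while $b_2$ is completely unconstrained. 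This gives only $\le2M_h(A)$ lines from that source, and there may be up to $2M_v(A)$ such sources. Your method therefore yields a bound of order $M_v(A)M_h(A)$, which can be of order $|A|^2$ (for instance when $A$ is the union of a vertical and a horizontal segment). So the bound $8|A|$ does not follow.

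The missing ingredient is a genuinely three-dimensional use of irreducibility, which is how the paper argues. Every input line with source $a$ joins the skew lines $L_0(a)=\{u=a_1,\ t=0\}$ and $L_1(a)=\{v=a_2,\ t=1\}$.
If neither of these lies on the quadric, each meets it in at most two points, so at most four input lines with source $a$ lie on it.
If exactly one does, say $L_0(a)$, then every input line with source $a$ on the quadric passes through one of the at most two points of $L_1(a)\cap\{F=0\}$. Through each such point $P$, at most one of them can lie on the quadric: two such lines together with $L_0(a)$ would be three lines of the quadric in the plane spanned by $P$ and $L_0(a)$, forcing that plane into the quadric.
Both boundary lines lie on the quadric only if $a_1$ and $a_2$ are both bad, hence for at most four sources, each contributing at most $|A|$ lines. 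This gives $4|A|+4|A|$.

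Separately, make the energy dictionary consistent. For $t\ne0,1$, one has $(u,v,t)\in\ell^H_{a,b}$ if and only if $b=g(a)$ with $g(x)=(\lambda x_1+u/t,\ \lambda^{-1}x_2+v/(1-t))$ and $\lambda=1-1/t$. This $g$ is a direct isometry carrying $A$ into $A^\top$. Uniqueness must therefore be applied to $(a,a')\mapsto(c^\top,c'^\top)$, using $H(c^\top-c'^\top)=H(c-c')$, not to a direct isometry sending $(a,a')\mapsto(c,c')$ within $A$. Since $\lambda=1-1/t$ takes each value in $\Fp^\times\setminus\{1\}$ exactly once, the unrepresented class is exactly the translations ($\lambda=-1$ corresponds to $t=1/2$), and they contribute at most $|A|^3$.
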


\begin{proof}
The values of a line at $t=0,1$ determine both $a$ and $b$, so
the lines are distinct. If the plane
$q_1u+q_2v+q_3t+q_0=0$ contains an input line, then
\[
 q_1a_1+q_2b_2+q_0=0,
 \qquad
 q_1b_1+q_2a_2+q_3+q_0=0.
\]
If $q_1q_2\ne0$, each $a$ determines at most one $b$.
If exactly one of $q_1,q_2$ vanishes, these equations fix one
coordinate of $a$ and one coordinate of $b$. Since the target
set is $A^\top$, there are at most $M_v(A)M_h(A)$ pairs.
If both coefficients vanish, the plane contains no input line.

Let $F(u,v,t)=0$ be an irreducible quadric over the algebraic
closure. For a fixed $a\in A$, the corresponding input lines
join the two skew lines
\[
 L_0(a)=\{u=a_1,\ t=0\},
 \qquad
 L_1(a)=\{v=a_2,\ t=1\}.
\]
If neither line is contained in the quadric, each has at most
two intersection points with it. Hence, at most four input lines
have source $a$. If exactly one is contained, the other has at
most two intersection points. Through each such point there is
at most one line on the quadric meeting the contained line.
Otherwise, the plane spanned by that point and the contained
line would contain three distinct lines of the quadric. Its
quadratic defining polynomial would vanish on the plane,
contradicting irreducibility.

Both $L_0(a)$ and $L_1(a)$ can be contained only for at most
four sources $a$. Indeed, neither $F(u,v,0)$ nor $F(u,v,1)$
is the zero polynomial, since otherwise $t$ or $t-1$ would
divide $F$. The first polynomial can vanish identically on
$u=a_1$ for at most two values of $a_1$, and the second can
vanish identically on $v=a_2$ for at most two values of $a_2$.
Each of these exceptional sources contributes at most $|A|$
input lines. The total is therefore at most
$4|A|+4|A|=8|A|$. A reducible quadric is supported on at most
two planes. The plane at infinity contains no input line, and
the other projective planes and quadrics restrict to affine
equations of degree at most one and two, respectively. This
also proves the assertions for projective closures.

For $t\ne0,1$, the condition $(u,v,t)\in\ell_{a,b}^H$ is
equivalent to $b=g_{u,v,t}(a)$, where
\[
 g_{u,v,t}(a_1,a_2)
 =\left(-\frac{1-t}{t}a_1+\frac ut,
         -\frac{t}{1-t}a_2+\frac v{1-t}\right).
\]
Thus, each $a$ determines at most one $b$, proving the
multiplicity bound. The direct isometries of $H$ have the form
\[
 g(x_1,x_2)=(r x_1+w_1,r^{-1}x_2+w_2),
 \qquad r\in\Fp^\times,\qquad w\in\Fp^2.
\]
The preceding parametrization gives precisely those with
$r\ne1$, each once: $r=-(1-t)/t$ and $t=1/(1-r)$.
The remaining direct isometries are translations.

If $H(a-a')=H(b-b')\ne0$, there is exactly one direct
isometry sending $a$ to $b$ and $a'$ to $b'$. Both coordinates
of each difference are nonzero, so
$r=(b_1-b'_1)/(a_1-a'_1)$ is uniquely determined, and equality
of the products forces the second coordinate multiplier to be
$r^{-1}$. The translation is then determined as well.
Applying this to $(a,a')\in A^2$ and
$(b,b')\in(A^\top)^2$ counts every quadruple in
$\mathfrak E_H^\times(A)$. If
$m_A^\top(g)=|\{a\in A:g(a)\in A^\top\}|$, it follows that
\[
 \mathfrak E_H^\times(A)\leq\sum_g m_A^\top(g)(m_A^\top(g)-1).
\]
Translations contribute at most $|A|^3$, since their
multiplicities sum to $|A|^2$ and are bounded by $|A|$.
For the remaining isometries, $m_A^\top(g_{u,v,t})$ is the line
multiplicity at $(u,v,t)$. The identity
$r(r-1)=2\sum_{k=2}^r(k-1)$ now gives
\eqref{app:eq-split-energy-tail}. This completes the proof.
\end{proof}

\begin{lemma}\label{app:lem-split-regular-energy}
Let $A\subseteq\Fp^2$ be nonempty and satisfy
$M_v(A)M_h(A)\leq2|A|$. Then
\begin{equation}\label{app:eq-split-regular-energy}
 \mathfrak E_H^\times(A)\ll
 \begin{cases}
 |A|^3\log(2|A|),&1\leq|A|\leq p,\\[2pt]
 \displaystyle\frac{|A|^4}{p}
       \log\left(\frac{2p^2}{|A|}\right),&p\leq|A|\leq p^2.
 \end{cases}
\end{equation}
The implied constant is absolute.
\end{lemma}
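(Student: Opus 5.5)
The plan mirrors the anisotropic argument of Propositions~\ref{app:prop-small-energy} and~\ref{app:prop-large-energy}, with the family $\mathcal L_A^H$ in place of $\mathcal L_E$ and the energy bound \eqref{app:eq-split-energy-tail} in place of \eqref{app:eq-distance-energy-tail}. By Lemma~\ref{app:lem-split-line-properties} and the hypothesis $M_v(A)M_h(A)\le2|A|$, every plane contains at most $2|A|$ lines of $\mathcal L_A^H$. Every irreducible quadric contains at most $8|A|$ of them. A reducible quadric is supported on at most two planes, so it contains at most $4|A|$ lines. Each point with $t\ne0,1$ lies on at most $|A|$ lines. With $n=|A|^2$, these are the hypotheses $s\le2\sqrt n$ and quadric bound $\le 8\sqrt n$ needed for Lemma~\ref{thm:background-two-rich} (with $c=8$) and Corollary~\ref{cor:RP}.

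\emph{Small range.} First take $|A|<p/\sqrt6$, so that $n<p^2/6$. Lemma~\ref{thm:background-two-rich} gives $R_2^*\le R_2\ll|A|^3$, and Theorem~\ref{thm:main} gives $R_k^*\ll|A|^3/k^2+|A|^2/k$ for $k\ge3$. Inserting these into \eqref{app:eq-split-energy-tail} gives $\mathfrak E_H^\times(A)\ll|A|^3\log(2|A|)$.

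For $p/\sqrt6\le|A|\le p$ I would use the random-subset averaging from Proposition~\ref{app:prop-small-energy}. Take a uniform subset $A'\subseteq A$ of size $\lfloor|A|/3\rfloor$. Each nonzero-distance quadruple survives with probability $\gg1$. The obstacle is that $A'$ need not inherit $M_v(A')M_h(A')\le2|A'|$, since fibres do not shrink proportionally. The fix is to note that the small-range argument only needs the plane bound $s\ll\sqrt{n'}$ with $n'=|A'|^2$, not the exact hypothesis. We have $M_v(A')M_h(A')\le M_v(A)M_h(A)\le2|A|\le 7|A'|$ for $|A|\ge12$. So Corollary~\ref{cor:RP} and Lemma~\ref{thm:background-two-rich} still apply to $A'$ with a larger absolute constant. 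Accordingly, I will state the small-range bound for sets with $M_vM_h\le C|A|$ and absolute $C$.

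\emph{Large range.} For $p\le|A|\le p^2$ I would repeat Lemma~\ref{app:lem-sampled-tail}. Sample lines with probability $\pi=p^2/(12|A|^2)$. Rational planes contain at most $2|A|$ lines, so each has expected sampled count at most $p/6$. The Chernoff tail $(\mathrm e/6)^p$ still beats the $p^3$ planes for $p$ large, giving $s\le p\ll\sqrt{|\mathcal A|}$ on the good event. Quadrics are not needed there, since only $k\ge3$ is used via Corollary~\ref{cor:RP}. This gives
\[
R_k^*\ll\frac{|A|^4}{pk^2}+\frac{|A|^2}{k}
\qquad\text{for } k\gg|A|^2/p^2 .
\]
Small $p$ and small $k$ are handled trivially as before: the bound $R_k^*\le p^3$ contributes $\ll|A|^4/p$. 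Summing as in Proposition~\ref{app:prop-large-energy} gives $(|A|^4/p)\log(2p^2/|A|)$. The $|A|^3$ term is absorbed because $|A|\ge p$.

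The main step to check carefully is the subset-inheritance issue in the range $p/\sqrt6\le|A|\le p$. Everything else is a direct transcription of the anisotropic proofs.
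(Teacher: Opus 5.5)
Your proposal is correct and follows the paper's proof essentially step for step. The paper also handles the small range with Lemma~\ref{thm:background-two-rich} and Theorem~\ref{thm:main}. It treats the intermediate range by a random subset, using your observation that the argument tolerates any absolute constant in the fibre-product hypothesis, and the large range by sampling lines with $\pi=p^2/(12|A|^2)$ and applying Corollary~\ref{cor:RP}. The only differences are choices of constants: the paper uses $|A|\le p/4$, $c=4$, subsets of size $\lfloor|A|/4\rfloor$ and the bound $16|B|$, where you use $p/\sqrt6$, $c=8$, $\lfloor|A|/3\rfloor$ and $7|A'|$.
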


\begin{proof}
First suppose $|A|\leq p/4$. By
Lemma~\ref{app:lem-split-line-properties}, the family has
$|A|^2<p^2/6$ lines, every plane contains at most $2|A|$
of them, and every quadric contains at most $8|A|$ of them,
including reducible quadrics. Lemma~\ref{thm:background-two-rich}
with $c=4$ gives $R_2(\mathcal L_A^H)\ll|A|^3$.
Theorem~\ref{thm:main} gives
\[
 R_k(\mathcal L_A^H)
 \ll\frac{|A|^3}{k^2}+\frac{|A|^2}{k},\qquad k\geq3.
\]
Since $R_k^*\leq R_k$, insertion into
\eqref{app:eq-split-energy-tail} yields
\[
 \mathfrak E_H^\times(A)
 \ll |A|^3+|A|^3\sum_{k=3}^{|A|}\frac1k
          +|A|^2\sum_{k=3}^{|A|}1
 \ll |A|^3\log(2|A|).
\]
The same argument applies with any fixed absolute constant in
place of $2$ in the fibre product hypothesis, with a
corresponding absolute implied constant.

For $p/4<|A|\leq p$, the cases $|A|<32$ follow from
$\mathfrak E_H^\times(A)\leq|A|^4$. Otherwise choose a
uniformly random subset $B\subset A$ of size
$\lfloor|A|/4\rfloor$. Then $|B|\leq p/4$ and
\[
 M_v(B)M_h(B)\leq2|A|\leq16|B|.
\]
The preceding argument gives
$\mathfrak E_H^\times(B)\ll|B|^3\log(2|B|)$.
Each quadruple counted by $\mathfrak E_H^\times(A)$ uses at
most four distinct points, and its probability of being retained
is at least
\[
 \frac{|B|(|B|-1)(|B|-2)(|B|-3)}
 {|A|(|A|-1)(|A|-2)(|A|-3)}\geq16^{-4}.
\]
Taking expectations proves the first range of
\eqref{app:eq-split-regular-energy}.

{Now suppose $|A|\ge p$, and first take $p\ge240$.}
Independently retain each line with probability
$\pi=p^2/(12|A|^2)$, and denote the sampled family by
$\mathcal A$. Its expected size is $p^2/12$.
Lemma~\ref{lem:binomial-tails} gives
\[
\Pr\bigl(|\mathcal A|<p^2/24\ \text{or}\ |\mathcal A|\geq p^2/6\bigr) \leq2\exp(-p^2/96).
\]
Every rational affine plane contains
at most $2|A|$ original lines, so its expected sample size is
at most $2\pi|A|\leq p/6$. Its probability of containing at
least $p$ sampled lines is at most $(\mathrm e/6)^p$.

Let $\mathcal G$ be the event that
$p^2/24\leq|\mathcal A|<p^2/6$ and every plane contains at
most $p$ sampled lines. There are $p(p^2+p+1)$ rational affine
planes. A geometric plane containing two distinct rational
lines is rational, so it is enough to count these planes.
The union bound gives
\[
 \Pr(\mathcal G)\geq1-\varepsilon_p,
 \qquad
 \varepsilon_p=2\exp(-p^2/96)
       +p(p^2+p+1)(\mathrm e/6)^p.
\]
{For $p\ge240$, the displayed expression satisfies
$\varepsilon_p\le1/4$.}
On $\mathcal G$, each plane contains at most
$\sqrt{24}\sqrt{|\mathcal A|}$ sampled lines. Thus,
Corollary~\ref{cor:RP} applies with an absolute implied constant.

If $k\geq240|A|^2/p^2$, then $\pi k\geq20$.
Put $j=\lfloor\pi k/2\rfloor$, so $j\geq3$ and
$j\geq\pi k/3$. At each point counted by
$R_k^*(\mathcal L_A^H)$, the sampled multiplicity has a
binomial distribution with mean at least $\pi k$.
Lemma~\ref{lem:binomial-tails} therefore gives
\[
 \Pr(r_{\mathcal A}(x)\geq j)
 \geq1-\exp(-\pi k/8)>3/4.
\]
Consequently,
$\Pr(\mathcal G\ \text{and}\ r_{\mathcal A}(x)\geq j)\geq1/2$;
no independence between these events is needed. Summing over
the original points and taking expectations, we obtain
\begin{equation}\label{app:eq-split-sampled-tail}
 \frac12 R_k^*(\mathcal L_A^H)
 \leq\mathbb E[\mathbf1_{\mathcal G}R_j(\mathcal A)]
 \ll\frac{p^3}{(\pi k)^2}+\frac{p^2}{\pi k}
 \ll\frac{|A|^4}{pk^2}+\frac{|A|^2}{k}.
\end{equation}

Put $k_* =\lceil240|A|^2/p^2\rceil$. Since $|A|\geq p$,
we have $k_*\ll|A|^2/p^2$. Below this threshold, the bound
$R_k^*(\mathcal L_A^H)\leq p^3$ gives
\[
 \sum_{2\leq k<k_*}(k-1)R_k^*(\mathcal L_A^H)
 \ll p^3k_*^2\ll|A|^4/p.
\]
For the remaining terms, \eqref{app:eq-split-sampled-tail} gives
\[
 \sum_{k_*\leq k\leq|A|}(k-1)R_k^*(\mathcal L_A^H)
 \ll\frac{|A|^4}{p}\log\left(\frac{2p^2}{|A|}\right)+|A|^3.
\]
If $k_*>|A|$, this sum is empty. Otherwise its harmonic factor
is at most $1+\log(|A|/k_*)\leq1+\log(p^2/|A|)$, an
absolute multiple of $\log(2p^2/|A|)$. Since
$|A|^3\leq|A|^4/p$, \eqref{app:eq-split-energy-tail} proves
the second range of \eqref{app:eq-split-regular-energy}.
{For $p<240$, the trivial bound
$\mathfrak E_H^\times(A)\leq|A|^4$
proves the same estimate with an absolute implied constant.
This completes the proof.}
\end{proof}

\begin{proof}[Proof of Proposition~\ref{app:prop-split-distance}]
Discard every vertical fibre of $E$ with fewer than
$|E|/(2p)$ points. This removes at most $|E|/2$ points.
Partition the remaining fibres into the classes
\[
 2^j\frac{|E|}{2p}\leq |E\cap\{x_1=x\}|
       <2^{j+1}\frac{|E|}{2p},\qquad j\geq0.
\]
Since each fibre has at most $p$ points, there are at most $1+\left\lfloor\log_2\left(2p^2/|E|\right)\right\rfloor  \ll L$
nonempty classes. Choose a class whose union $A$ satisfies
\begin{equation}\label{app:eq-split-class-size}
 |A|\geq c_0|E|/L
\end{equation}
for an absolute constant $0<c_0<1/2$.
If the fibres in this class have sizes in $[b,2b)$ and there
are $r$ of them, then
\[
 M_v(A)<2b,\qquad M_h(A)\leq r,\qquad |A|\geq br.
\]
It follows that $M_v(A)M_h(A)\leq2|A|$, so
Lemma~\ref{app:lem-split-regular-energy} applies.

If $M_v(A)\geq|A|/4$ or $M_h(A)\geq|A|/4$, choose an
isotropic line $\ell$ containing at least $|A|/4$ points of
$A$. There is a point $z\in E\setminus\ell$, since $E$ is
not contained in an isotropic line. The map
$a\mapsto H(a-z)$ is injective on $\ell$, which gives
$|\Delta_H(E)|\geq|A|/4$.
Otherwise,
\[
 \nu_A(0)\leq |A|(M_v(A)+M_h(A))<|A|^2/2.
\]
Equations~\eqref{app:eq-split-energy-cs} and
\eqref{app:eq-split-regular-energy} give
\begin{equation}\label{app:eq-split-class-distance}
 |\Delta_H(E)|\gg
 \begin{cases}
 \displaystyle\frac{|A|}{\log(2|A|)},&|A|\leq p,\\[6pt]
 \displaystyle\frac{p}{\log(2p^2/|A|)},&|A|\geq p.
 \end{cases}
\end{equation}
The same estimates follow from $|A|/4$ in the preceding case.

If $|A|\geq p$, then \eqref{app:eq-split-class-size} implies $\log(2p^2/|A|)\leq L+\log(L/c_0)\ll L$, because $L\geq\log2$. Hence, $|\Delta_H(E)|\gg p/L$.
If $|A|<p$, then $|E|<pL/c_0$. Substituting
$|E|=2p^2\exp(-L)$ gives $\log(2p)<L+\log(L/c_0)\ll L$.
It follows from \eqref{app:eq-split-class-distance} that
\[
 |\Delta_H(E)|\gg\frac{|A|}{\log(2p)}
                 \gg\frac{|E|}{L^2}.
\]
The two cases prove \eqref{app:eq-split-distance}.
This completes the proof.
\end{proof}
}

\subsection{Point--line incidences}

We finish by deriving Corollary~\ref{cor:incidence} from the same
rich-point estimate. This argument applies over every field $\mathbb{F}$ allowed by Theorem~\ref{thm:main}. We first estimate
points incident to at least $\sqrt{2n}$
lines separately and then sum the rich-point bounds over the remaining
multiplicities.

\begin{proof}[Proof of Corollary~\ref{cor:incidence}]
Write $n=|\mathcal L|$. The assertion is immediate if either $X$ or $\mathcal L$ is empty, so suppose both are nonempty. Put $k_{\max}=\lceil\sqrt{2n}\rceil$.
Lemma~\ref{lem:large-richness} gives
\[
 |X_{\mathrm{hi}}|\le 2n/k_{\max}\le\sqrt{2n},
 \qquad X_{\mathrm{hi}}=\{x\in X:r_{\mathcal L}(x)\ge k_{\max}\}.
\]
If $I_{\mathrm{hi}}=I(X_{\mathrm{hi}},\mathcal L)$, the fact
that two distinct points lie on at most one input line and
Cauchy--Schwarz give
\[
 I_{\mathrm{hi}}^2\le
 nI_{\mathrm{hi}}+n|X_{\mathrm{hi}}|^2
 \le nI_{\mathrm{hi}}+2n^2.
\]
Consequently, $I_{\mathrm{hi}}\le2n$.

For $3\le k<k_{\max}$, Theorem~\ref{thm:main} gives
\[
 R_k\ll (n^{3/2}+ns)/k^2+n/k\ll (n^{3/2}+ns)/k^2,
\]
because $k<\sqrt{2n}$ and $n^{3/2}+ns\ge n^{3/2}$. For every integer
$k_0 \ge2$, summing the multiplicity tail below $k_{\max}$ now yields
\[
 I(X,\mathcal L)
 \le |X| k_0+\sum_{k_0<k<k_{\max}}R_k+2n
 \ll |X| k_0+(n^{3/2}+ns)/k_0+n.
\]
Take $k_0 =\max\{2,\lceil\sqrt{(n^{3/2}+ns)/|X|}\rceil\}$. It follows that
\[
 I(X,\mathcal L)
 \ll |X|^{1/2}(n^{3/2}+ns)^{1/2}+|X|+n.
\]
In particular, $s\le C\sqrt n$ gives
$I(X,\mathcal L)\ll_C |X|^{1/2}n^{3/4}+|X|+n$.
The separate estimate for $X_{\mathrm{hi}}$ removes the
logarithm that would result from summing the $n/k$ term directly.
This completes the proof.
\end{proof}

\section{Distances in even-dimensional quadratic spaces}
\label{sec:even-dimensional-distances}

We deduce Theorem~\ref{thm:distance-even} from the planar {estimates}
and the extraction theorem of \cite{PhamShenXue}{, together with
a refinement for split forms}.
The quadratic form on the extracted plane is determined by the
Witt type of the original form{, so we treat the two types separately}.

Let $q$ be an odd prime power and let $Q$ be a nondegenerate
quadratic form on $\mathbb F_q^{2m}$. Put $H(u,v)=uv$.
There is a nondegenerate binary quadratic form $P_Q$ such that
\begin{equation}\label{eq:even-witt-decomposition}
Q\simeq H^{\perp(m-1)}\perp P_Q,
\end{equation}
where $\simeq$ denotes isometry and $H^{\perp(m-1)}$ is the
orthogonal sum of $m-1$ copies of $H$. For $m=1$, this sum
is the zero-dimensional form.
The form $P_Q$ is anisotropic when $Q$ has Witt index $m-1$,
and it is hyperbolic when $Q$ has Witt index $m$.

Indeed, over a finite field of odd characteristic, the anisotropic
part of a nondegenerate quadratic form has dimension at most two.
In even dimension its dimension is therefore zero or two.
The two alternatives in \eqref{eq:even-witt-decomposition}
follow from the Witt decomposition.

We use the following extraction result from \cite[Theorem~1.1]{PhamShenXue}.

\begin{lemma} \label{lem:even-planar-extraction}
There is an absolute constant $C_0\geq1$ with the following
property. Let $q$ be an odd prime power, let $Q$ be a
nondegenerate quadratic form on $\mathbb F_q^{2m}$, and let
$P_Q$ be as in \eqref{eq:even-witt-decomposition}.
For every nonempty $E\subseteq\mathbb F_q^{2m}$, there are
a set $A\subseteq\mathbb F_q^2$ and an injection
$\iota:A\longrightarrow E$ such that
\[
|A|\geq \frac{1}{C_0}
\frac{|E|/q^{m-1}}{1+|E|/q^{m+1}}
\]
and
\[
P_Q(a-b)=Q\bigl(\iota(a)-\iota(b)\bigr)
\qquad (a,b\in A).
\]
In particular, $\Delta_{P_Q}(A)\subseteq\Delta_Q(E)$.
\end{lemma}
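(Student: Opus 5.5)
The plan is a Cauchy--Schwarz argument over cosets of a randomly chosen totally isotropic subspace, averaged over all such subspaces. Write $n=2m$ and fix the decomposition \eqref{eq:even-witt-decomposition}. Consider the family $\mathcal R$ of $(m-1)$-dimensional totally isotropic subspaces $R\subseteq\mathbb F_q^{2m}$ with $R^\perp/R\simeq P_Q$; the orthogonal group acts transitively on $\mathcal R$ by Witt's theorem.

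The key observation concerns a fixed $R\in\mathcal R$. Take $x,y$ in one coset $C=z+R^\perp$. Then $x-y\in R^\perp$, and $Q(x-y)$ depends only on the class of $x-y$ modulo $R$, since $R$ is isotropic and orthogonal to $R^\perp$. This value equals $P_Q$ of the image in $R^\perp/R\cong\mathbb F_q^2$. So fix an identification of $C/R$ with $\mathbb F_q^2$ that is an affine isometry onto $(\mathbb F_q^2,P_Q)$. Choose one point of $E$ in each $R$-class of $C$ that meets $E$. This produces $A\subseteq\mathbb F_q^2$ and an injection $\iota$ with $P_Q(a-b)=Q(\iota(a)-\iota(b))$. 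Thus it suffices to find $R$ and a coset $C$ for which the number $N_C$ of $R$-classes meeting $E\cap C$ is large.

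By Cauchy--Schwarz inside $C$,
\[
N_C\ \ge\ |E\cap C|^2\big/ \#\{(x,y)\in (E\cap C)^2: x-y\in R\}.
\]
Summing over the $q^{m-1}$ cosets and choosing the best coset gives
\[
\max_C N_C\ \ge\ \frac{\sum_C|E\cap C|^2}{\#\{(x,y)\in E^2:x-y\in R\}}.
\]
The numerator counts pairs with $x-y\in R^\perp$. Cauchy--Schwarz over the cosets bounds it below by $|E|^2/q^{m-1}$ for every $R$. For the denominator, I would average over $R\in\mathcal R$. The diagonal contributes $|E|$. A pair with $x\ne y$ can contribute only if $v=x-y$ is isotropic. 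For such $v$, transitivity gives $\Pr(v\in R)=(q^{m-1}-1)/\#\{\text{nonzero isotropic vectors}\}\asymp q^{m-1}/q^{2m-1}=q^{-m}$.

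The main obstacle is the count of isotropic difference pairs. The Iosevich--Rudnev Fourier bound for the sphere of radius zero gives
\[
\#\{(x,y)\in E^2:Q(x-y)=0\}\le |E|^2/q+O(q^{m}|E|).
\]
Here one needs the Gauss-sum estimate for the isotropic cone in even dimension, whose Fourier transform is of size $q^{-m}$ off the origin. Inserting this, the average denominator is $\ll |E|+|E|^2q^{-m-1}+|E|$. Hence some $R$ has denominator $\ll |E|(1+|E|/q^{m+1})$. Combining with the numerator bound yields a coset $C$ with
\[
N_C\gg \frac{|E|/q^{m-1}}{1+|E|/q^{m+1}},
\]
which is the asserted size of $A$ with an absolute $C_0$. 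The last claim, $\Delta_{P_Q}(A)\subseteq\Delta_Q(E)$, is then immediate from the distance identity.
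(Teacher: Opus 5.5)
Your proof is correct and self-contained, and it gives an explicit constant. With $\Pr(v\in R)\le 2q^{-m}$ for nonzero isotropic $v$ and $Z\le |E|^2/q+q^m|E|$, the averaged denominator is at most $3|E|(1+|E|/q^{m+1})$, so $C_0=3$ works.

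The paper does not prove this lemma; it imports it from \cite[Theorem~1.1]{PhamShenXue}. Your argument is nevertheless the same collision-count method the paper uses for its split refinement, Lemma~\ref{lem:even-split-extraction}. Both take a uniform $R$ in the orbit of $(m-1)$-dimensional totally isotropic subspaces and apply Cauchy--Schwarz over $R$-cosets inside each $R^\perp$-coset. Both obtain $\Pr(v\in R)$ from transitivity and double counting, and use the Fourier bound for the zero cone.

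The only real difference is in the numerator. You use the pointwise bound $\sum_C|E\cap C|^2\ge |E|^2/q^{m-1}$, valid for every $R$, which is all the stated estimate needs. The paper instead computes $\mathbb E\sum_C|E\cap C|^2$ exactly, through $\Pr(v\in R^\perp)$ and the identity $\gamma-p\beta=\alpha$. That exact computation is what yields the extra conclusions in the split case: $|A|\ge p$, and a slice containing a nonzero distance. Your cruder bound loses a constant factor near $|E|\approx q^m$ and cannot give those conclusions, but the present lemma does not require them.
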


The residual form can also be seen directly in the construction.
For a totally isotropic subspace $R$ of dimension $m-1$, define
\[
B_Q(x,y)=\frac{Q(x+y)-Q(x)-Q(y)}{2},
\qquad
R^\perp=
\{v\in\mathbb F_q^{2m}:B_Q(v,r)=0\ \forall r\in R\}.
\]
The quotient $R^\perp/R$ is a nondegenerate quadratic plane,
with quadratic form $\overline Q(v+R)=Q(v)$, and is isometric
to $(\mathbb F_q^2,P_Q)$. If $x,y$ lie in the same affine
$R^\perp$-coset, then $x-y\in R^\perp$ and
\[
Q(x-y+r)=Q(x-y)
\qquad(r\in R).
\]
Thus, quadratic distances within that coset are preserved on
passing to the quotient. This explains why the planar form
in Lemma~\ref{lem:even-planar-extraction} is $P_Q$.

{For split forms, we need an extraction that preserves the
threshold $p^m$ and produces a planar set not contained in an
isotropic line. The next lemma gives both properties. Its proof
compares the collision counts for affine $R$-cosets and affine
$R^\perp$-cosets.

\begin{lemma}\label{lem:even-split-extraction}
Let $p$ be an odd prime, let $m\geq1$, and let $Q$ be a split
nondegenerate quadratic form on $\Fp^{2m}$. Suppose that
$E\subseteq\Fp^{2m}$ satisfies $|E|\geq p^m$ and is not a
translate of an $m$-dimensional totally isotropic subspace.
Then there are a set $A\subseteq\Fp^2$, not contained in an
affine isotropic line for $H$, and an injection
$\iota:A\longrightarrow E$ such that
\begin{equation}\label{eq:even-split-extracted-size}
 |A|\geq p,
 \qquad
 |A|\geq\frac{p^2}{2(1+p^{m+1}/|E|)},
\end{equation}
and
\[
 H(a-b)=Q\bigl(\iota(a)-\iota(b)\bigr)
 \qquad(a,b\in A).
\]
\end{lemma}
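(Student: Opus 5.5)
Write $Q\simeq H^{\perp m}$ and fix a totally isotropic subspace $R$ of dimension $m-1$ inside the first $m-1$ hyperbolic summands. Then $R^\perp=R\oplus W$, where $W$ is a hyperbolic plane on which $Q$ restricts to $H$. We will also choose the complement of $R^\perp$ so that $\Fp^{2m}=R\oplus W\oplus R'$, with $R'$ totally isotropic and paired with $R$. For an affine $R^\perp$-coset $y+R^\perp$, the quotient map $x\mapsto x-y\ (\mathrm{mod}\ R)$ sends $E\cap(y+R^\perp)$ into $R^\perp/R\cong(\Fp^2,H)$. Distances are preserved, and two points of the coset have the same image exactly when they lie in a common $R$-coset. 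We then take $A$ to be the image of one section: pick one point of $E$ in each $R$-coset inside a chosen $R^\perp$-coset, and let $\iota$ be that choice. The identity $H(a-b)=Q(\iota(a)-\iota(b))$ is the observation recorded just before the lemma.

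For the size bound, we count collisions. Let $N_{R}$ and $N_{R^\perp}$ be the numbers of ordered pairs in $E^2$ lying in a common affine $R$-coset and a common affine $R^\perp$-coset, respectively. Over the $p^2$ cosets of $R^\perp$, the number of occupied $R$-cosets is $\sum_{\text{cosets }C}|E\cap C|^2/\text{(pairs within $R$-cosets of $C$)}$. By Cauchy--Schwarz there is a coset whose image $A$ satisfies $|A|\ge N_{R^\perp}/N_R$ weighted appropriately; more precisely, we pick the $R^\perp$-coset maximizing $|\text{image}|$ and compare $\sum_C|\mathrm{im}(C)|\ge \sum_C |E\cap C|^2/ \sum_{R\text{-cosets in }C}|E\cap D|^2$. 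The key identity will be the Fourier expression: $N_R=p^{-(m+1)}\sum_{\xi\in R^\perp}|\widehat{1_E}(\xi)|^2$ and $N_{R^\perp}=p^{-(m-1)}\sum_{\xi\in R}|\widehat{1_E}(\xi)|^2$, using $(R)^{\perp}$ duality via $B_Q$. Since $R\subseteq R^\perp$, this gives $N_{R^\perp}\le p^{-2}\cdot p^{2}\cdot\ldots$. We will use these to get the clean inequality $N_{R^\perp}\ge |E|^2/p^2$ (Cauchy--Schwarz over $p^2$ cosets) and $N_R\le |E|+|E|^2/p^{m+1}$-type bounds on average over a random choice of $R$ (averaging $N_R$ over all totally isotropic $R$ of dimension $m-1$ in the orbit of a fixed flag, so that a fixed nonzero difference lies in a common $R$-coset with probability $\approx p^{-(m+1)}$). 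That yields $|A|\ge p^2/(2(1+p^{m+1}/|E|))$, and averaging also handles the condition $|A|\ge p$ when $|E|\ge p^m$, since the bound gives $|A|\ge p^2/(2(1+p))$, so we need a slightly sharper count. The sharpening is where the hypothesis $|E|\ge p^m$ enters.

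The main obstacle is twofold. First, we must guarantee $|A|\ge p$ exactly, not merely $\gg p$. Second, we must ensure that $A$ is not inside an isotropic line. Both fail precisely in degenerate configurations. If every good coset image lay in an isotropic line of $H$, then $E$ would sit in $R^\perp$-cosets meeting $E$ inside preimages of isotropic lines, which are affine $m$-dimensional totally isotropic spaces. I will try to show via the collision identity, with equality analysis in Cauchy--Schwarz, that $|E|\ge p^m$ forces a contradiction unless $E$ is one such translate. For this I will vary $R$ among subspaces inside a maximal isotropic subspace, and use that the isotropic lines of $W$ lift to the two families of maximal isotropic subspaces containing $R$. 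If for every admissible $R$ the extraction degenerates, then the fibers of $E$ are all contained in a single maximal totally isotropic translate, and the size bound $|E|\ge p^m$ forces equality. This case analysis is the step I expect to require the most care.
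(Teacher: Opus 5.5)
Your proposal has a genuine gap exactly where you place the ``main obstacle'', and the plan you sketch there does not fill it. The failure of $|A|\ge p$ is not a structural degeneracy that an equality analysis in Cauchy--Schwarz could rule out; it is quantitative. At $|E|=p^m$, the ratio obtained from separate bounds on $N_{R^\perp}$ and $\mathbb E N_R$ is only about $p/2$.

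The paper closes this with an exact identity in which the number $Z$ of pairs at distance zero cancels. Take $R$ uniform in the orthogonal-group orbit of an $(m-1)$-dimensional totally isotropic subspace. Then $\Pr(v\in R^\perp)=\alpha=(p-1)/(p^m-1)$ when $Q(v)\ne0$. For nonzero isotropic $v$ one has $\Pr(v\in R)=\beta$ and $\Pr(v\in R^\perp)=\gamma$, with $\gamma-p\beta=\alpha$. Hence $\mathbb E(N_{R^\perp}-pN_R)=\alpha|E|(|E|-p^m)\ge0$.

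Two facts you do not use then finish the argument.
\begin{enumerate}
\item A slice image with no pair at nonzero distance lies on an isotropic line of $H$, so it has at most $p$ points.
\item Some slice image does contain such a pair. This is because $\Delta_Q(E)\ne\{0\}$, and each nonisotropic difference lies in $R^\perp$ with probability $\alpha>0$. The fact $\Delta_Q(E)\ne\{0\}$ comes from the totally isotropic span argument, and this is the only place the non-translate hypothesis is used.
\end{enumerate}
If every good image had fewer than $p$ points, slice-wise Cauchy--Schwarz would give $N_{R^\perp}\le pN_R$ for every $R$, strictly for some, contradicting the identity. So the largest good image has $M\ge p$ points and dominates every bad image, which gives $M\ge\mathbb E N_{R^\perp}/\mathbb E N_R$. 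This is what makes the extracted set simultaneously large and not contained in an isotropic line. Maximizing $|\mathrm{im}(C)|$ over all slices, as you propose, guarantees neither property once your ratio bound is at most $p$.

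Your quantitative step also needs repair.
\begin{itemize}
\item Since $\dim R^\perp=m+1$, there are $p^{m-1}$ affine $R^\perp$-cosets, not $p^2$; the number $p^2$ is $|R^\perp/R|$. So Cauchy--Schwarz gives $N_{R^\perp}\ge|E|^2/p^{m-1}$. Your bound $|E|^2/p^2$ is false for $m\ge4$. Combined with your bound on $N_R$, it would give only $|A|\ge|E|/(p^2+|E|p^{1-m})$, which is less than $1$ when $m=1$.
\item A fixed nonzero difference does not lie in $R$ with probability about $p^{-(m+1)}$ uniformly. The probability is $0$ for an anisotropic difference and $\beta\approx p^{-m}$ for an isotropic one. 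Thus $\mathbb E N_R=|E|+\beta(Z-|E|)$.
\item A bound like $\mathbb E N_R\ll|E|+|E|^2/p^{m+1}$ therefore needs the separate estimate $Z\le|E|^2/p+(p-1)p^{m-1}|E|$. The paper proves this by Fourier analysis on the cone $\{Q=0\}$. Without it, for $m\ge2$ and large $|E|$, you only get $|A|\gg p$ rather than $p^2/(2(1+p^{m+1}/|E|))$.
\end{itemize}
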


\begin{proof}
Since $Q$ is split, an isometry permits us to assume
\[
 Q(x,y)=x\cdot y,
 \qquad x,y\in\Fp^m.
\]
First, $\Delta_Q(E)\ne\{0\}$. Indeed, otherwise translate a
point of $E$ to zero. The quadratic form then vanishes on all
points of the translated set and on all their differences.
The associated bilinear form vanishes on every pair of these
points, so their linear span is totally isotropic and has
dimension at most $m$. Since $|E|\geq p^m$, that span has
dimension exactly $m$ and equals the translated set. This
contradicts the hypothesis on $E$.

Fix a totally isotropic subspace of dimension $m-1$, and
choose $R$ uniformly from its orbit under the orthogonal
group of $Q$. {The quotient $R^\perp/R$ is a split quadratic plane by
\cite[Lemma~8.10 and Theorem~8.4]{ElmanKarpenkoMerkurjev}.} We compute the probabilities that a fixed vector
belongs to $R$ or $R^\perp$.

{By Witt's extension theorem
\cite[Theorem~8.3]{ElmanKarpenkoMerkurjev},
the orthogonal group is transitive on vectors of each
fixed nonzero norm, and on nonzero isotropic vectors.} For equal
nonzero norms, reflection in $v-w$, or reflection in $v+w$
followed by negation, sends $v$ to $w$: at least one of these
two reflecting vectors has nonzero norm. For nonzero
isotropic vectors, extend each vector to a hyperbolic pair
and then to a hyperbolic basis. Sending one such basis to
the other gives the required isometry.

For every $t\ne0$, direct counting gives
\[
 |\{v:Q(v)=t\}|=p^{m-1}(p^m-1),
 \qquad
 |\{v\in R^\perp:Q(v)=t\}|=p^{m-1}(p-1).
\]
The second count follows by lifting the $p-1$ points of
$\{(x,y):xy=t\}$ through the quotient, each with
$p^{m-1}$ preimages. Thus, double counting gives
\[
 \Pr(v\in R^\perp)=\alpha:=\frac{p-1}{p^m-1}
 \qquad(Q(v)\ne0).
\]
For the zero cone $S_0=\{v:Q(v)=0\}$, we have
\[
 |S_0|-1
 =p^{2m-1}+(p-1)p^{m-1}-1
 =(p^{m-1}+1)(p^m-1).
\]
There are $p^{m-1}-1$ nonzero isotropic vectors in $R$ and
$(2p-1)p^{m-1}-1$ in $R^\perp$, since $H$ has $2p-1$
zeros. Double counting now gives
\[
\begin{aligned}
 \Pr(v\in R)
 &=\beta:=
 \frac{p^{m-1}-1}{(p^{m-1}+1)(p^m-1)},\\
 \Pr(v\in R^\perp)
 &=\gamma:=
 \frac{(2p-1)p^{m-1}-1}{(p^{m-1}+1)(p^m-1)}
 \qquad(v\ne0,\ Q(v)=0).
\end{aligned}
\]
These probabilities satisfy
\begin{equation}\label{eq:even-split-probability-identity}
 \gamma-p\beta=\alpha.
\end{equation}
The formulas also hold for $m=1$, when $R=\{0\}$,
$\alpha=\gamma=1$, and $\beta=0$.

For each $R$, define
\[
 S_R=\sum_{x+R}|E\cap(x+R)|^2,
 \qquad
 P_R=\sum_{z+R^\perp}|E\cap(z+R^\perp)|^2,
\]
where the sums run over the affine cosets of the indicated
subspaces. Also, let
\[
 Z=\bigl|\{(a,b)\in E^2:Q(a-b)=0\}\bigr|.
\]
Counting diagonal, nonzero isotropic, and nonisotropic
differences separately yields
\[
 \mathbb E S_R=|E|+\beta(Z-|E|),
\]
and
\[
 \mathbb E P_R
 =|E|+\gamma(Z-|E|)+\alpha(|E|^2-Z).
\]
Therefore, \eqref{eq:even-split-probability-identity} gives
the exact identity
\begin{equation}\label{eq:even-split-collision-identity}
\mathbb E(P_R-pS_R) =-(p-1)|E|+\alpha(|E|^2-|E|) =\alpha|E|(|E|-p^m)\geq0.
\end{equation}

For each affine slice $z+R^\perp$, translate by $-z$ and
project its intersection with $E$ to $R^\perp/R$. Denote
the image by $A_{R,z}$ and its multiplicities by
$w_{R,z}(a)$. Call the slice good if its image contains a
pair at nonzero distance, and bad otherwise. Since
$|E|^2-Z>0$, the expected number of nonzero-distance pairs
in common affine $R^\perp$-cosets is
$\alpha(|E|^2-Z)>0$. Hence, a good slice exists.

Every bad image is contained in an affine isotropic line
and has at most $p$ points. For a nonempty bad image,
translate one of its points to zero. Its linear span is totally isotropic,
by the same bilinear-form argument used above, and hence
has dimension at most one.

Let $M$ be the largest cardinality of a good image, over
all $R$ in the orbit and all affine slices. We claim that
$M\geq p$. If $M<p$, Cauchy--Schwarz gives
\[
 \left(\sum_{a\in A_{R,z}}w_{R,z}(a)\right)^2
 \leq |A_{R,z}|\sum_{a\in A_{R,z}}w_{R,z}(a)^2
 \leq p\sum_{a\in A_{R,z}}w_{R,z}(a)^2
\]
on every slice, with strict inequality on every good
slice. At least one good slice exists. Summing and
averaging would therefore give
$\mathbb E P_R<p\mathbb E S_R$, contrary to
\eqref{eq:even-split-collision-identity}. Thus, $M\geq p$.
It follows that every image, good or bad, has at most
$M$ points. The same Cauchy--Schwarz inequality then gives
$P_R\leq M S_R$, and consequently
\begin{equation}\label{eq:even-split-image-ratio}
 M\geq\frac{\mathbb E P_R}{\mathbb E S_R}
 =p+\frac{\alpha|E|(|E|-p^m)}{\mathbb E S_R}.
\end{equation}

To bound the denominator, we use
\begin{equation}\label{eq:even-split-zero-pair-bound}
 Z\leq\frac{|E|^2}{p}+(p-1)p^{m-1}|E|.
\end{equation}
Here is a proof. Let $\chi$ be a nontrivial additive
character of $\Fp$, and use the normalization
\[
 \widehat f(\xi)=p^{-2m}\sum_{z\in\Fp^{2m}}
                           f(z)\chi(-\xi\cdot z).
\]
Additive orthogonality gives
\[
 \widehat{1_{S_0}}(\xi)
 =\frac1p1_{\{\xi=0\}}
  +p^{-m-1}\sum_{s\in\Fp^\times}
                    \chi\bigl(-Q(\xi)/s\bigr).
\]
The last sum is $p-1$ when $Q(\xi)=0$ and $-1$ otherwise.
Fourier inversion and Parseval imply
\[
 Z =p^{4m}\sum_{\xi\in\Fp^{2m}} |\widehat{1_E}(\xi)|^2\widehat{1_{S_0}}(\xi) \leq\frac{|E|^2}{p}+(p-1)p^{m-1}|E|,
\]
which proves \eqref{eq:even-split-zero-pair-bound}.
Since $\beta\leq1/(p^m-1)$ and $Z\geq|E|$,
\[
\mathbb E S_R \leq |E|+\frac{Z-|E|}{p^m-1} \leq2|E|+\frac{|E|^2}{p(p^m-1)}.
\]
Substituting this estimate into
\eqref{eq:even-split-image-ratio} gives
\[
 M\geq p+
 \frac{p(p-1)(|E|-p^m)}{2p(p^m-1)+|E|}.
\]
In particular,
\[
\begin{aligned}
M &\geq p+ \frac{p(p-1)(|E|-p^m)}{2p^{m+1}+|E|} =\frac{p^2|E|+(p+1)p^{m+1}}{2p^{m+1}+|E|}\\
&\geq\frac{p^2|E|}{2p^{m+1}+|E|}
\geq\frac{p^2}{2(1+p^{m+1}/|E|)}.
\end{aligned}
\]

Choose a good slice attaining $M$, identify its quotient
form with $H$, and choose one representative in $E$ for
each quotient point. This gives $A$ and the injection
$\iota$. Passage to the quotient preserves every pairwise
distance, because $R$ is the radical of the form restricted
to $R^\perp$. The image is good, so it is not contained in
an affine isotropic line. The two bounds on $M$ give
\eqref{eq:even-split-extracted-size}. This completes the proof.
\end{proof}
}

{
\begin{proof}[Proof of Theorem~\ref{thm:distance-even}]
Put $\rho=p^{m+1}/|E|$ and $L_E=\log(2+\rho)$. Since $|E|>p^m$, we have $0<\rho<p$.
A nondegenerate quadratic form on $\Fp^{2m}$ has Witt
index either $m-1$ or $m$. We consider these cases separately.

First suppose that $Q$ has Witt index $m-1$.
Apply Lemma~\ref{lem:even-planar-extraction} with $q=p$.
The residual binary form $P_Q$ is anisotropic, and we obtain
a set $A\subseteq\Fp^2$ satisfying
\begin{equation}\label{eq:even-planar-size}
 |A|\geq\frac{p^2}{C_0(1+\rho)},
 \qquad
 \Delta_{P_Q}(A)\subseteq\Delta_Q(E).
\end{equation}
For every $1\leq|A|\leq p^2$,
Theorem~\ref{thm:distance} implies
\begin{equation}\label{eq:even-unified-planar-bound}
 |\Delta_{P_Q}(A)|\gg
 \frac{\min\{|A|,p\}}{\log(2p^2/|A|)}.
\end{equation}
Indeed, for $|A|\geq p$ this is the second case of Theorem~\ref{thm:distance}. For $|A|\leq p$, it follows from the first case
and $\log(2|A|)\leq\log(2p^2/|A|)$.

Since $\rho\leq p$, \eqref{eq:even-planar-size} gives $|A|\geq\frac{p^2}{C_0(p+1)}\geq\frac{p}{2C_0}$. Consequently, $\min\{|A|,p\}\geq\frac{p}{2C_0}$.
Moreover,
\[
 \log(2p^2/|A|)
 \leq\log\bigl(2C_0(1+\rho)\bigr)
 \ll\log(2+\rho)=L_E.
\]
Combining these estimates with
\eqref{eq:even-unified-planar-bound}, we obtain
\[
 |\Delta_Q(E)|\geq|\Delta_{P_Q}(A)|
 \gg\frac{p}{L_E}.
\]
This proves the required bound in the nonsplit case.
The same argument applies when $|E|=p^m$, proving the
stronger assertion \eqref{eq:distance-even} in the remark.

Now suppose that $Q$ has Witt index $m$.
Because $|E|>p^m$, the set $E$ cannot be a translate of an
$m$-dimensional totally isotropic subspace.
Lemma~\ref{lem:even-split-extraction} therefore gives a set
$A\subseteq\Fp^2$, not contained in an affine isotropic
line for $H$, such that
\begin{equation}\label{eq:even-split-planar-size}
 |A|\geq p,
 \qquad
 |A|\geq\frac{p^2}{2(1+\rho)},
 \qquad
 \Delta_H(A)\subseteq\Delta_Q(E).
\end{equation}
Set $L_A=\log(2p^2/|A|)$. Then
\[
 L_A\leq\log\bigl(4(1+\rho)\bigr)
 \ll\log(2+\rho)=L_E.
\]
Applying Proposition~\ref{app:prop-split-distance} and
\eqref{eq:even-split-planar-size}, we obtain
\begin{equation}\label{eq:even-split-transfer}
 |\Delta_Q(E)|\gg
 \min\left\{
       \frac{p}{L_E},
       \frac{p^2}{(1+\rho)L_E^2}
      \right\}.
\end{equation}

If $\rho\geq1$, then $\frac{p^2}{1+\rho} \geq\frac{p^2}{2\rho}
 =\frac{|E|}{2p^{m-1}}$.
Thus, \eqref{eq:even-split-transfer} gives
\[
 |\Delta_Q(E)|\gg
 \min\left\{
       \frac{p}{L_E},
       \frac{|E|}{p^{m-1}L_E^2}
      \right\}.
\]
If $0<\rho<1$, then $\log2<L_E<\log3$ and $\frac{p^2}{1+\rho}>\frac{p^2}{2}$. Hence \eqref{eq:even-split-transfer} gives
$|\Delta_Q(E)|\gg p$, which also implies the required bound,
since its right-hand side is at most $p/L_E\ll p$.

All implied constants are absolute and independent of
$p$, $m$ and $Q$. This completes the proof.
\end{proof}
}

\end{document}